\newtheorem{theorem}{Theorem}[section]
\newtheorem{lemma}[theorem]{Lemma}
\newtheorem{corollary}[theorem]{Corollary}
\newtheorem{proposition}[theorem]{Proposition}
\theoremstyle{definition}
\newtheorem{definition}[theorem]{Definition}
\theoremstyle{notation}
\newtheorem{construction}[theorem]{Construction}
\newtheorem{conventions}[theorem]{Conventions}
\theoremstyle{remark}
\newtheorem{remark}[theorem]{Remark}
\newtheorem{remarks}[theorem]{Remarks}
\numberwithin{equation}{section}
\numberwithin{equation}{subsection}
\newcommand{\be}%
  {\protect\setcounter{equation}{\value{subsubsection}}}
  \newcommand{\ee}%
   {\protect\setcounter{subsubsection}{\value{equation}}}
\def \rmA{\rm A}
\def \BG{\rm BG}
\def \EG1{\rm EG}
\def \BN(T){\rm BN_{G}(T)}
\def \rmB{\rm B}
\def \BH{\rm BH}
\def \BK{\rm BK}
\def \BGL{\rm BGL}
\def \Br{\rm Br}
\def \BT{\rm BT}
\def \rmC{\rm C}
\def \Cl{\mathbb C}
\def \colimm{\underset {m \rightarrow \infty}  {\hbox {lim}}}
\def \colimalpha{\underset {\alpha}  {\hbox {colim}}}
\def \colimK.{\underset {\underset K^.  \rightarrow}  {\hbox {lim}}}
\def \colimU.{\underset {\underset U_.  \rightarrow}  {\hbox {lim}}}
\def \compl{\, \, {\widehat {}}}
\def \rmc{\rm c}
\def \rmD{\rm D}
\def \DS1X{\rm {DS^1X}}
\def \rmD{\rm D}
\def \E{{\rm E}}
\def \cE{\mathcal E}
\def \EG1{E{(G \times {\mathbb C}^*)}{\underset {G\times {\mathbb C}^*} 
\times}}
\def \EZ(s)1{E{(Z(s) \times {\mathbb C}^*)}{\underset {(Z(s)\times {\mathbb
C}^*)}  \times}}
\newcommand{\eps}{ \, {\boldsymbol\varepsilon} \,}
\def \EM(u){EM(u){\underset {M(u)}  \times}}
\def \EM(us){EM(u,s){\underset {M(u, s)}  \times}}
\def \EG{\rm EG}
\def \rmE{\rm E}
\def \EH{\rm EH}
\def \EK{\rm EK}
\def \ET{\rm ET}
\def \EGL{\rm EGL}
\def \rmf{\rm f}
\def \rmF{\rm F}
\def \group{\rm G}
\def \rmG{\rm G}
\def \GL{\rm {GL}}
\def \rmGL{\rm {GL}}
\def \rmg{\rm g}
\def\holimD{\mathop{\textrm{holim}}\limits_{\Delta }}
\def \hocolimD{\underset \Delta  {\hbox {hocolim}}}
\def \holimn {\underset {\infty \leftarrow n}  {\hbox {holim}}}
\def \H{\mathbb H}
\def \rmH{\rm H}
\def \HRR{\rm {HRR}}
\def \Hom{\underline {Hom}}
\def \Hom{{\mathcal H}om}
\def \rmh{\rm h}
\def \h{\it h}
\def \invlim1{\underset {\infty \leftarrow q}  {\hbox {lim}}^1}
\def \rmI{\rm I}
\def \rmj{\rm j}
\def \rmk{\rm k}
\def \rmK{\rm K}
\def \k{\it k}
\def \L3{\Lambda \times \Lambda \times \Lambda}
\def \L2{\Lambda \times \Lambda}
\def \lim{\underset \leftarrow  {\hbox {lim}}}
\def \longright2arrow{{\overset \longrightarrow  {\overset {} 
\longrightarrow}}}
\def \L{L\times \Cl ^*}
\def \rmL{\rm L}
\def \rmM{\rm M}
\def \Map{\underline {Map}}
\def \Map{{\mathcal M}ap}
\def \cN{\mathcal N}
\def \N(T){\rm {N_{G}(T)}}
\def \rmN{\rm N}
\def \NT{\rm N_{G}(T)}
\def \Nis{\rm {Nis}}
\def \O{{\mathcal O}}
\def \rmp{\rm p}
\def \rmP{\rm P}
\def \Spt{\rm {Spt}}
\def \cP{\mathcal P}
\def \cQ{\mathcal Q}
\def \ra{\rightarrow}
\def \Ra{\Rightarrow}
\def \RG^{R(G)^{\hat {}}\ }
\def \res{respectively}
\def \rmS{\rm S}
\def \RHom{{{\mathcal R}{\mathcal H}om}}
\def \rmR{\rm R}
\def \Sm{\rm {Sm}}
\def \Speck{{\rm {Spec}}\, {\it k}}
\def\Spt{\rm {\bf Spt}}
\def \Sph{\rm {Sph}}
\def \SH{{\mathcal S}{\mathcal H}}
\def\Spt{\rm {\bf Spt}}
\def\Spc{\rm {\bf Spc}}
\def \mbS{\mathbb S}
\def \rmS{\rm S}
\def \topGcoh*{^{top, *} _{G}}
\def \topGho*{ _{top,*} ^{G}}
\def \T{{\mathbf T}}
\def \tr{\it {tr}}
\def \rmT{\rm T}
\def \Th{\rm Th}
\def \rmU{\rm U}
\def \U{\mathcal U}
\def \rmu{\rm u}
\def \rmV{\rm V}
\def \W{\mathbf W}
\def \rmW{\rm W}
\def \rmw{\rm w}
\def \rmX{\rm X}
\def \cX{\mathcal X}
\def \X{\mathcal X}
\def \x{\it x}
\def \rmx{\rm x}
\def \cY{\mathcal Y}
\def \Y{\mathcal Y}
\def \rmY{\rm Y}
\def \rmy{\rm y}
\def \Z(s){Z(s) \times {\mathbb C}^*}
\def \Z{\mathcal Z}
\def \bZ{\mathbb Z}
\def \rmZ{\rm Z}
\def \z{\it z}
\begin{document}

\title{Additivity and Double Coset formulae for the Motivic and \'Etale Becker-Gottlieb transfer}

\author{Roy Joshua}
\address{Department of Mathematics, Ohio State University, Columbus, Ohio,
43210, USA.}

\email{joshua.1@math.osu.edu}
\author{Pablo Pelaez}
\address{Instituto de Matem\'aticas, Ciudad Universitaria, UNAM, DF 04510, M\'exico.}
\email{pablo.pelaez@im.unam.mx}
\thanks{2010 AMS Subject classification: 14F20, 14F42, 14L30.\\ \indent Both authors would like to thank the Isaac Newton Institute for Mathematical Sciences, Cambridge, for support and hospitality during the programme {\it K-Theory, Algebraic Cycles and Motivic Homotopy Theory} where part of the work on this paper was undertaken. 
This work was supported in part by EPSRC grant no EP/R014604/1. The first author would also like to thank the Simons Foundation for support.}
\begin{abstract} In this paper, which is a continuation of earlier work by the first author and
	Gunnar Carlsson, one of the first results we establish is the additivity of the motivic Becker-Gottlieb transfer, as well as their \'etale realizations.
	This extends the additivity results the authors already established for the corresponding traces. 
	We then apply this to derive several important consequences: for example, 
	in addition to obtaining the analogues of various double coset formulae known in the classical setting of algebraic topology, we also obtain applications
	to Brauer groups of homogeneous spaces associated to reductive groups over separably closed fields. We also consider 
        the relationship between the transfer on schemes provided with a compatible
	action by a $1$-parameter subgroup and the transfer associated to the fixed point scheme of the $1$-parameter subgroup.
\end{abstract}
\maketitle

\centerline{\bf Table of contents}
\vskip .2cm 
1. Introduction
\vskip .2cm
2. The $\rmG$-equivariant pre-transfer
\vskip .2cm
3. Additivity of the pre-transfer
\vskip .2cm
4. Mayer-Vietoris and Additivity of the Transfer
\vskip .2cm
5. The rigidity property 
\vskip .2cm
6. More on Nisnevich neighborhoods
\vskip .2cm
7. Applications of the Additivity and Multiplicativity of the Transfer
\vskip .2cm
\markboth{ Roy Joshua and Pablo Pelaez}{Additivity and  Double Coset Formulae for the Motivic and \'Etale Becker-Gottlieb transfer}
\input xypic

\vfill \eject
\section{\bf Introduction}
\label{intro}
This paper is a continuation of earlier work where the first author and Gunnar Carlsson
set up motivic and \'etale variants of the classical Becker-Gottlieb transfer. If one recalls, the power and utility of the classical Becker-Gottlieb transfer 
stems from the fact it provided a convenient mechanism to obtain splittings to certain  maps
in the stable homotopy category, making use of the Euler-characteristic of the fiber.
The most notable example of this is the calculation of the Euler-characteristic of $\rmG/\NT$, where $\rmG$ is a compact Lie group
and $\NT$ is the normalizer of a maximal torus in $\rmG$. A 
particularly nice way to prove this is to first observe that the above Euler characteristic is closely related to the
trace of the suspension spectrum of $\rmG/\NT$, and then show that the trace is additive. 
\vskip .1cm
In \cite{JP23}, the authors exploited this idea to prove a corresponding result in the motivic context 
for any split linear algebraic group $\rmG$, and $\NT$ the normalizer of a split maximal torus in $\rmG$. A key point here is to prove
that the trace (see Definition ~\ref{pretransfer.3} below) is {\it additive}. Moreover, as the
trace is defined in terms of the pre-transfer, the additivity of the trace may be deduced from the additivity of the pre-transfer. 
Thus already several of the key splitting results make use of the additivity of the pre-transfer.

\vskip .1cm
{\it The main goal of this paper is to prove an additivity theorem for the corresponding Becker-Gottlieb transfer and to consider several applications of
 such a theorem}. We establish a number of applications, such as various double coset formulae, and others in the motivic and \'etale framework.
 In fact all of section 7 is devoted to a discussion of various applications, beginning with Theorem ~\ref{torus.act} through Theorem ~\ref{double.coset.1} and Corollary ~\ref{double.coset.5}.
In addition to obtaining motivic variants of several of the classical applications, we also obtain
 applications to the Brauer groups of certain homogeneous spaces, algebraic stacks and GIT-quotients: see Corollary ~\ref{double.coset.4} and \cite{DIJ23}.
\vskip .2cm
Here is a quick overview of the paper. Section 2 is devoted to a quick review of the equivariant pre-transfer and section 3 establishes its
additivity property, with much
of the details worked out already in \cite[2.1]{JP23}. The Mayer-Vietoris and additivity properties of the transfer are established in section 4. 
The stronger results on additivity for the transfer hold only on generalized cohomology theories defined with respect to
spectra that have {\it the rigidity property}: section 5 is devoted to a discussion on this property. Section 6 discusses Nisnevich neighborhoods
and section 7 is devoted to various applications of the additivity property of the transfer. Moreover, we have written this paper in such a manner
 that it is more or less self-contained and independent of the details of the construction of the transfer discussed elsewhere.
\vskip .2cm
Throughout the paper, we will adopt the terminology rom \cite{CJ23-T1}. We will work with the category, $\Sm_{\k}$, of smooth schemes of finite type over a fixed base field $\k$. 
 Let $\rmG$ denote a linear algebraic group defined 
over $\k$. $\Spc(\k)$ ($\Spc_*(\k)$) will denote the category of simplicial presheaves 
(pointed simplicial presheaves, \res) on $\Sm_{\k}$. $\Spc^{\rmG}(\k)$ ($\Spc_*^{\rmG}(\k)$) will denote 
the corresponding category of $\rmG$-equivariant simplicial presheaves.
$\Spt^{\rmG}(\k_{mot})$, ${\widetilde {\Spt}}^{\rmG}(\k_{mot})$ and ${\widetilde {\Spt}}(\k_{mot})$ ($\Spt^{\rmG}(\k_{et})$, ${\widetilde {\Spt}}^{\rmG}(\k_{et})$ and ${\widetilde {\Spt}}(\k_{et})$)
 will denote the corresponding category of spectra defined on the Nisnevich site (on the \'etale site, \res) as discussed in \cite[section 4]{CJ23-T1}. One may recall that
 $\Spt^{\rmG}(\k_{mot})$ ($\Spt^{\rmG}(\k_{et})$) denotes the category of $\rmG$-equivariant motivic spectra (\'etale spectra) and 
 that ${\widetilde {\Spt}}^{\rmG}(\k_{mot})$, ${\widetilde {\Spt}}(\k_{mot})$ are categories of motivic spectra intermediate between 
 $\Spt^{\rmG}(\k_{mot})$ and the usual category of motivic spectra $\Spt(\k_{mot})$.  Similarly ${\widetilde {\Spt}}^{\rmG}(\k_{\rm et})$, ${\widetilde {\Spt}}(\k_{\rm et})$ are categories of \'etale spectra intermediate between 
 $\Spt^{\rmG}(\k_{\rm et})$ and the usual category of \'etale spectra $\Spt(\k_{et})$. 
 We will primarily work with the category ${\widetilde \Spt}^{\rmG}(\k_{mot})$.
\section{\bf The $\rmG$-equivariant pre-transfer}
The following is a summary of the discussion in \cite[2.1]{JP23}, which itself is a variant of the discussion in \cite[Chapter III]{LMS}.
\begin{definition} 
\label{co-mod.st}
(Co-module structures)
Let $\rmC$ denote an unpointed simplicial presheaf in $\Spc(\k)$, and let $\rmC_+$ denote the associated pointed simplicial presheaf. Then the diagonal map $\Delta : \rmC_+ \ra \rmC_+ \wedge \rmC_+$ together with
the augmentation $\epsilon: \rmC_+ \ra \rmS^0$ defines the structure of an associative co-algebra of simplicial presheaves on $\rmC_+$.
A pointed simplicial presheaf $\rmP$ in $\Spc_*(\k)$ will be called a right $\rmC_+$-co-module, if it comes equipped with maps $\Delta: \rmP \ra \rmP \wedge \rmC_+$
so that  the diagrams:
\begin{equation}
 \label{diagonal.1}
  \xymatrix{{ \rmP} \ar@<1ex>[r]^{\Delta} \ar@<1ex>[d]^{\Delta} & {\rmP \wedge \rmC_+} \ar@<1ex>[d]^{id \wedge \Delta}\\  
             {\rmP \wedge \rmC_+} \ar@<1ex>[r]^{\Delta \wedge id} & {\rmP \wedge \rmC_+ \wedge \rmC_+ }}
  \quad \mbox{ and } \xymatrix{ {\rmP} \ar@<1ex>[d]^{\Delta} \ar@<1ex>[dr]^{id} \\           
                                {\rmP \wedge \rmC_+} \ar@<1ex>[r]^{id \wedge \epsilon} & {\rmP \wedge \rmS^0}}
\end{equation}
commute. {\it The most common choice of $\rmP$ is with $\rmP = \rmC_+$} and with the obvious diagonal map $\Delta: \rmC_+ \ra \rmC_+ \wedge \rmC_+$ as 
providing the co-module structure. It needs to be pointed out that the reason we are constructing the pre-transfer in this generality (see the definition below) is so that we are able to obtain
strong additivity results as in Theorem ~\ref{additivity.tr.0}. 
\end{definition}
\vskip .2cm
We define the {\it $\rmG$-equivariant pre-transfer} as follows: see \cite[Definition 2.2]{JP23} for the 
definition of the (non-equivariant) pre-transfer.
\begin{definition}
 	  \label{pretransfer.3}
 	Assume that the pointed simplicial presheaf $\rmP$ belongs to $\Spc_*^{\rmG}(\k)$ for a given linear algebraic group $\rmG$ so 
        that (i) $\mbS^{\group} \wedge \rmP$ is dualizable in ${\widetilde {\Spt}}^{\rmG}(\k_{\rm mot})$ and (ii) is provided
 	with a $\rmG$-equivariant pointed map $\rmf: \rmP \ra \rmP$. Assume further that $\rmC$ is an unpointed simplicial presheaf in $\Spc^{\rmG}(\k)$ so that $\rmP$ is a
 	right $\rmC_+$-co-module. 
 	Then the {\it $\rmG$-equivariant pre-transfer with respect to $\rmC$} is defined to be a map
  \be	\begin{equation}
 	tr(f)'^{\rmG}: \mbS^{\rmG} \ra \mbS^{\rmG} \wedge \rmC_+,
   \end{equation} \ee
 	which is the
 	composition of the following maps. Let $e: \rmD(\mbS^{\rmG }\wedge \rmP )
 	\wedge \mbS^{\rmG} \wedge \rmP \ra \mbS^{\rmG}$ denote the evaluation map. Observe that, this map being natural, is automatically $\rmG$-equivariant.
 	We take the dual of this map to obtain:
 	\be \begin{equation}
 	  \label{coeval}
 	c=D(e): \mbS^{\rmG} \simeq \rmD(\mbS^{\rmG}) \ra \rmD(\rmD(\mbS^{\rmG} \wedge \rmP ) \wedge (\mbS^{\rmG} \wedge \rmP) ) {\overset {\simeq} \leftarrow} \rmD(\mbS^{\rmG} \wedge \rmP)\wedge 
 	(\mbS^{\rmG} \wedge \rmP ) {\overset {\tau} \ra}  (\mbS^{\rmG} \wedge \rmP ) \wedge \rmD(\mbS^{\rmG} \wedge \rmP).
 	\end{equation} \ee
 	
 	Here $\tau$ denotes the obvious flip map interchanging the two factors and $c$ {\it denotes the co-evaluation}. The reason that taking
 	the double dual yields the same object up to weak-equivalence
 	is because we are in fact taking the dual in ${\widetilde {\Spt}}^{\rmG}(\k_{mot})$, which with a suitable stable model structure was shown 
 	to be Quillen equivalent to $\Spt(\k_{mot})$ with its (usual) stable model structure: see \cite[Proposition 6.2]{CJ23-T1}.
 	  Observe that all the maps that {\it go in the left-direction are weak-equivalences}. All the maps involved in the definition of the co-evaluation map are  {natural maps and therefore
 	automatically $\rmG$-equivariant}.
 	\vskip .2cm
 	To complete the definition of the pre-transfer $tr(f)'^{\rmG}: \mbS^{\rmG} \ra \mbS^{\rmG} \wedge \rmC_+$, one simply composes
 	the co-evaluation map with the following composite map:
 	\be \begin{multline}
 	\begin{split}
 	(\mbS^{\rmG} \wedge \rmP )  \wedge \rmD(\mbS^{\rmG} \wedge \rmP) {\overset {\tau} \ra} \rmD(\mbS^{\rmG} \wedge \rmP)\wedge (\mbS^{\rmG} \wedge \rmP ){\overset {id \wedge f} \ra } \rmD(\mbS^{\rmG} \wedge \rmP)\wedge (\mbS^{\rmG} \wedge \rmP ) \\
 	 {\overset {id \wedge \Delta} \ra }\rmD(\mbS^{\rmG} \wedge \rmP)\wedge (\mbS^{\rmG} \wedge \rmP ) \wedge (\mbS^{\rmG} \wedge \rmC_+ )
 	{\overset {e \wedge id} \ra } \mbS^{\rmG} \wedge (\mbS^{\rmG} \wedge \rmC_+ ) \simeq \mbS^{\rmG} \wedge \rmC_+.
 	\end{split}
 	\end{multline} \ee
 	\vskip .1cm
 	The corresponding {\it $\rmG$-equivariant trace}, $\tau^{\rmG}(f)$ is defined as the composition of the above pre-transfer $tr(f)'^{\rmG}$ with the projection sending 
 	$\rmC_+$ to $\rmS^0_+$. 
 	\vskip .1cm
 	When  $\rmf=id_{\rmP}$, the pre-transfer (trace) will be denoted $tr_{\rmP}'^{{\rmG}}$ ($\tau^{\rmG}_{\rmP}$, \res), and when
 	$\rmP =\rmC_+ $ and $\rmf=id_{\rmP}$, the pre-transfer (trace) will be denoted $tr_{\rmC_+}'^{{\rmG}}$ ($\tau^{\rmG}_{\rmC_+}$, \res).
 \end{definition}
 \begin{remark}
  \label{key.ident.trace} Observe that now the trace map identifies with the following composite maps:
  \[ \tau_{\rmC_+}^{\rmG}: \mbS^{\rmG} {\overset c \ra} \mbS^{\rmG} \wedge \rmC_+ \wedge \rmD(\mbS^{\rmG} \wedge \rmC_+) {\overset {\tau} \ra} \rmD(\mbS^{\rmG} \wedge \rmC_+) \wedge \mbS^{\rmG} \wedge \rmC_+ {\overset e \ra} \mbS^{\rmG} \mbox{ and }\]
 \end{remark}
 \begin{definition}
 \label{generalized.traces}
 If $\cE^{\rmG}$ denotes {\it any commutative ring spectrum} in $\Spt^{\rmG}(\k_{\rm mot})$, one may replace the sphere spectrum
 $\mbS^{\rmG}$ everywhere by $\cE^{\rmG}$ and define the pre-transfer and trace similarly, provided the
 unpointed simplicial presheaf $\rmC$ is such that $\cE^{\group} \wedge \rmC_+$ is dualizable in ${\widetilde {\Spt}}^{\rmG}(\k_{mot}, {\cE^{\rmG}})$ and is provided
 	with a $\rmG$-equivariant pointed map $\rmf: \rmC_+ \ra \rmC_+$.  The corresponding $\rmG$-equivariant pre-transfer will be
 denoted $\tr(f)'_{\cE^{\rmG}}$ in general and when $f=id_{C_+}$, by $\tr'_{\cE^{\rmG}}$ or $\tr'_{\rmC_+, \cE^{\rmG}}$. 
 \end{definition}
 

\vskip .2cm
 \section{\bf The additivity of the pre-transfer} \index{pre-transfer: additivity} \index{trace: additivity}
 Let 
 \be \begin{equation}
      \label{additivity.0}
 \rmU_+ {\overset {\rmj_+} \ra} \rmX_+ {\overset {\rmk_+} \ra} \rmX/\rmU = Cone(\rmj) \ra \rmS^1 \wedge \rmU_+
 \end{equation} \ee
 denote a cofiber sequence where both $\rmU$ and $\rmX$ are unpointed simplicial presheaves in $\Spc(\k)$, with $j_+$ a cofibration. Now a key point to observe is that all of $\rmU_+$, $\rmX_+$ and $\rmX/\rmU$ have the
 structure of right $\rmX_+$-co-modules. The right $\rmX_+$-co-module structure on $\rmX_+$ is given by the diagonal map $\Delta: \rmX_+ \ra \rmX_+ \wedge \rmX_+$,
 while the right $\rmX_+$-co-module structure on $\rmU_+$ is given by the map $\Delta: \rmU _+ {\overset {\Delta} \ra} \rmU_+ \wedge \rmU_+ {\overset {id \wedge j_+} \ra} \rmU_+ \wedge \rmX_+$,
 where $j: \rmU \ra \rmX$ is the given map. The right $\rmX_+$-co-module structure on $\rmX/\rmU$ is obtained in view of the commutative square
 \vskip .1cm
 \be \begin{equation}
      \xymatrix{{\rmU} \ar@<1ex>[r]^{(id \times j)\circ \Delta} \ar@<1ex>[d]^j & {\rmU \times X} \ar@<1ex>[d]^{j \times id}\\
                {\rmX} \ar@<1ex>[r]^{\Delta} & {\rmX \times \rmX}}
 \end{equation} \ee
 which provides the map 
 \be \begin{equation}
   \label{comod.XU}
 \rmX/\rmU \ra (\rmX \times \rmX)/(\rmU \times \rmX) \cong (\rmX/\rmU) \wedge \rmX_+.
\end{equation} \ee
If one assumes that all the simplicial presheaves in ~\eqref{additivity.0} are provided with the action by a linear algebraic group $\rmG$
 so that all the maps in ~\eqref{additivity.0} are also $\rmG$-equivariant, one may see also that the all the above co-module structures
 are compatible with the actions by $\rmG$.
 
 \vskip .1cm 
We begin with the following results, which are variants of \cite[Theorem 7.10, Chapter III and Theorem 2.9, Chapter IV]{LMS} adapted to our contexts.
 \begin{theorem} 
  \label{additivity.tr.0}
  Let $\rmU_+ {\overset {\rmj_+} \ra} \rmX {\overset {\rmk_+} \ra} \rmX/\rmU = Cone(\rmj) \ra \rmS^1 \wedge \rmU_+$ denote a cofiber sequence 
  as in ~\eqref{additivity.0} where all the above simplicial presheaves  are provided with actions by a linear algebraic group $\rmG$
  so that all the above maps are $\rmG$-equivariant. Let $f: \rmU_+ \ra \rm U_+$, $g: \rmX_+ \ra \rmX_+$ denote two $\rmG$-equivariant maps so that the diagram
  \vskip .1cm
  \xymatrix{{\rmU_+} \ar@<1ex>[r]^{j_+} \ar@<1ex>[d]_f & {\rmX_+} \ar@<1ex>[d]_g \\
            {\rmU_+} \ar@<1ex>[r]^{j_+} & {\rmX_+} } 
 \vskip .1cm \noindent
 commutes. Let  $h: \rmX/\rmU \ra \rmX/\rmU$ denote the corresponding induced map. Then, with the right $\rmX$-co-module structures discussed above,
 one obtains the following commutative diagram:
 \be \begin{equation}
      \label{additivity.tr.diagram.0}
\xymatrix{{\rmU_+} \ar@<1ex>[r]^{j_+} \ar@<1ex>[d]_{\Delta} & {\rmX_+} \ar@<1ex>[r]^{k_+} \ar@<1ex>[d]_{\Delta} & {\rmX/\rmU} \ar@<1ex>[r]^l \ar@<1ex>[d]_{\Delta} & {\rmS^1\wedge \rmU_+} \ar@<1ex>[d]_{\rmS^1 \Delta}\\
            {\rmU_+\wedge \rmX_+} \ar@<1ex>[r]^{j_+ \wedge id} & {\rmX_+ \wedge \rmX_+} \ar@<1ex>[r]^{k_+ \wedge id} & {(\rmX/\rmU) \wedge \rmX_+} \ar@<1ex>[r]^{l \wedge id}  & {\rmS^1\wedge \rmU_+ \wedge \rmX_+}}
 \end{equation} \ee
 \vskip .1cm \noindent
  Assume further that the $\mbS^{\rmG}$-suspension spectra of all the above simplicial presheaves are dualizable in 
  ${\widetilde {\Spt}}^{\rmG}(\k_{\rm mot})$. Then 
  \be \begin{equation}
  \label{add.pretr.trace}
  \tr'^{\rmG}(g) = \tr'^{\rmG}(f) + \tr'^{\rmG}(h)\quad  \mbox{ and } \tau^{\rmG}(g) = \tau^{\rmG}(f) + \tau^{\rmG}(h).
 \end{equation} \ee 
  Moreover if $\tr(f)$, $\tr(g)$ and $\tr(h)$ denote the induced transfer maps
 in any one of the three basic contexts (a), (b) or (c) of \cite[section 2]{CJ23-T2}, then 
 \be \begin{equation}
 \label{add.transfer}
 \tr(g) = j\circ \tr(f) + \tr(h)
 \end{equation} \ee
 where $\tr(f)$ denotes the transfer associated to the pre-transfer denoted $\tr'^{\rmG}(f)_{\rmU}: \mbS^{\rmG} \ra \mbS^{\rmG} \wedge \rmU_+$: see Remark ~\ref{clar.add.tr} below.
 \vskip .1cm
 Let $\cE^{\rmG}$ denote a commutative ring spectrum in ${\widetilde {\Spt}}^{\rmG}(\k_{\rm mot})$ or ${\widetilde {\Spt}}^{\rmG}(\k_{et})$. 
 In the latter case, we will further assume that $\cE^{\rmG}$ is $\ell$-complete for some prime $\ell \ne char(k)$. Then the 
 	corresponding results also hold if the smash products of the above simplicial presheaves
 	with the ring spectrum $\cE^{\rmG}$ are dualizable in ${\widetilde {\Spt}}^{\rmG}(\k_{\rm mot}, \cE^{\rmG} )$ and
 	${\widetilde {\Spt}}^{\rmG}(\k_{et}, \cE^{\rmG})$.
 \end{theorem}
\begin{remark}
  \label{clar.add.tr}
  Here it is important to observe that $\tr'^{\rmG}(f): \mbS^{\rmG} \ra \mbS^{\rmG} \wedge \rmX_+$ is defined as in  Definition ~\ref{pretransfer.3}, making use of the right 
  $\rmX_+$ co-module structure on $\rmU_+$. If one uses $\tr'^{\rmG}(f)_{\rmU}$ to denote the corresponding pre-transfer $\mbS^{\rmG} \ra \mbS^{\rmG} \wedge \rmU_+$,
  then it is clear that $\tr'^{\rmG}(f) = j \circ \tr'^{\rmG}(f)_{\rmU}$. Similarly it is important to observe that $\tr'^{\rmG}(h): \mbS^{\rmG} \ra \mbS^{\rmG} \wedge\rmX_+$
  is defined by making use of the right $\rmX_+$ co-module structure on $\rmX/\rmU$. Moreover, $\tr(f)$ will henceforth denote the transfer associated to the pre-transfer denoted $\tr'^{\rmG}(f)_{\rmU}$.
  \end{remark}
\begin{proof}Proposition ~\ref{pairing.hom.cats} below shows that the additivity of the transfer as in ~\eqref{add.transfer} follows from the additivity of the pre-transfer. Therefore,
 in what follows, we will only discuss the additivity of the pre-transfer. One may observe that we have proved the corresponding statements in the 
 non-equivariant case in \cite[Theorem 2.5]{JP23}. Moreover, the proof of \cite[Theorem 2.5]{JP23} is by showing that the proof of \cite[Theorem 7.10, Chapter III]{LMS} carries over to that framework.
 Therefore, we proceed to verify that the proof of \cite[Theorem 7.10, Chapter III]{LMS} carries over to our framework.  This will then complete the proof of
  Theorem ~\ref{additivity.tr.0}. This amounts to verifying that the big commutative diagram given on \cite[p. 166]{LMS} carries over to our framework. One may observe that this big diagram is broken up into various sub-diagrams, labeled (I) through (VII) and that it suffices to verify that each of these sub-diagrams commutes up to homotopy. 
   Moreover, one may observe that the maps that make up each of these sub-diagrams are natural maps and therefore are $\rmG$-equivariant, so that they hold
   in the present context. Moreover, one may feed each of these sub-diagrams into the Borel construction. This will prove that additivity holds for the $\rmG$-equivariant pre-transfer and the induced
   transfer on Borel-style
   generalized cohomology theories.
 
 \end{proof}
 \begin{theorem}
 	\label{additivity.tr.1}
 	Let $\rmF= \rmF_1\sqcup_{\rmF_3}\rmF_2$ denote a  pushout of unpointed simplicial
 	presheaves in $\Spc(\k)$, with 
 	the corresponding maps $\rmF_3 \ra \rmF_2$, $\rmF_3 \ra \rmF_1$ and $\rmF_j \ra \rmF$, for $j=1,2, 3$, assumed to be
 	cofibrations.
  Assume further the following:
 	\begin{enumerate}[\rm(i)]
 	\item all the above simplicial presheaves are provided with compatible actions by the linear algebraic group $\rmG$ making all the maps above $\rmG$-equivariant {\it and}
 	\item the $\mbS^{\rmG}$-suspension spectra of all the above simplicial presheaves are dualizable in ${\widetilde {\Spt}}^{\rmG}(\k_{mot})$.
 	\end{enumerate}
 	Let $i_j$ denote the  $\rmF_j \ra \rmF$, $j=1,2, 3$ as well 
 	as the corresponding map induced by $i_j$ on the Borel constructions.
 	Then 
 	\begin{enumerate}
 	\item 	$tr_{\rmF_+}'^{\rmG} =  i_1 \circ tr_{\rmF_{1+}}'^{\rmG} +  i_2 \circ tr_{\rmF_{2+}}'^{\rmG} -  i_3 \circ tr_{\rmF_{3+}}'^{\rmG} \mbox{ and } \tau_{\rmF_+}^{\rmG} =  \tau_{\rmF_{1+}}^{\rmG} +  \tau_{\rmF_{2+}}^{\rmG} -  \tau_{\rmF_{3+}}^{\rmG}$, \\
 	where \, $tr_{\rmF_+}'^{\rmG}$ and $tr_{\rmF_{j+}}'^{\rmG}$, $j=1, 2, 3$ ($\tau_{\rmF_+}^{\rmG}$, $\tau_{\rmF_{j+}}$, $j=1,2, 3$) denote the $\rmG$-equivariant pre-transfer maps ($\rmG$-equivariant trace maps, respectively) with equality holding in 
 	${\widetilde {\SH}}^{\rmG}(k_{mot})$, which denotes the corresponding homotopy category.
 	Moreover, $tr_{\rmF} =  i_1 \circ tr_{\rmF_{1}} +  i_2 \circ tr_{\rmF_{2}} -  i_3 \circ tr_{\rmF_{3}}$ which denote the corresponding transfer maps
 	in any one of the three basic contexts as in \cite[section 2]{CJ23-T2}, with equality holding in $\SH(k_{mot})$.
 	\item
 	In particular, taking $\rmF_2=*$, and $\rmF= Cone(\rmF_3 \ra \rmF_1)$, we obtain:\\
	    $tr_{\rmF}'^{\rmG} = i_1 \circ tr_{\rmF_{1+}}'^{\rmG} - i_3 \circ tr_{\rmF_{3+}}'^{\rmG}, \quad \, \tau_{\rmF}^{\rmG} =  \tau_{\rmF_{1+}}^{\rmG} - \tau_{\rmF_{3+}}^{\rmG}$ and 
	    $tr_{\rmF} = i_1 \circ tr_{\rmF_{1}} - i_3 \circ tr_{\rmF_{3}}$ \\ the last of which denote the corresponding transfer maps
 	in any one of the three basic contexts as in \cite[section 2]{CJ23-T2} with equality holding in ${\SH}(k_{mot})$.
        \end{enumerate}
 	Let $\cE^{\rmG}$ denote a commutative ring spectrum in ${\widetilde {\Spt}}^{\rmG}(\k_{\rm mot})$ or ${\widetilde {\Spt}}^{\rmG}(\k_{et})$. In the latter case, we will further assume that $\cE^{\rmG}$ is $\ell$-complete for some prime $\ell \ne char(k)$. Then the 
 	corresponding results also hold if the smash products of the above simplicial presheaves
 	with the ring spectrum $\cE^{\rmG}$ are dualizable in ${\widetilde {\SH}}^{\rmG}(k_{mot}, \cE^{\rmG})$ and
 	${\widetilde {\SH}}^{\rmG}(k_{mot}, \cE^{\rmG})$, which denotes the corresponding homotopy category.
 \end{theorem}	
 \begin{proof}
  One may observe that the hypotheses of Theorem \ref{additivity.tr.0} are satisfied with $\rmU$, $\rmX$ and $\rmX/\rmU$ there
  equal to the $\mbS^{\rmG}$-suspension spectra of $(\rmF_1 \sqcup \rmF_2)$, $\rmF$ and $\rmS^1 \wedge \rmF_{3,+}$. These arguments, therefore reduce the proof of Theorem ~\ref{additivity.tr.1} to 
   that of Theorem ~\ref{additivity.tr.0}.
 \end{proof}

\vskip .2cm
 \begin{proposition}(Multiplicative property of the pre-transfer and trace) \index{pre-transfer: multiplicativity} \index{trace: multiplicativity}
	\label{mult.prop}
 Assume $\rmF_i$, $i=1,2$ are simplicial presheaves provided with actions by the group $\rmG$.
 Let $\rmf_i: \rmF_i \ra \rmF_i$, $i=1,2$ denote a $\rmG$-equivariant map. Let $\rmF = \rmF_{1+} \wedge \rmF_{2+}$
 and let $\rmf= \rmf_{1+} \wedge \rmf_{2+}$.
 Then 
 \[ tr'_{\rmF}(\rmf) = tr'_{\rmF_{1+}}(\rmf_{1+}) \wedge tr'_{\rmF_{2+}}(\rmf_{2+}). \]
A corresponding result holds if $\rmF_2$ is a pointed simplicial presheaf with $\rmF = \rmF_{1+} \wedge \rmF_2$.
\end{proposition}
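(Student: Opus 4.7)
The plan is to verify that every constituent map in the composition defining the pre-transfer $tr'_\rmF(\rmf)$ (Definition \ref{pretransfer}) factors as a smash product of the corresponding maps for $\rmF_1$ and $\rmF_2$, so that multiplicativity becomes a formal consequence of the symmetric monoidal structure on $\widetilde{\USpt}{}^{\rmG}$ (Propositions \ref{SphG.symm.mon} and \ref{equiv.nonequiv.spectra.1}) together with the compatibility of Spanier–Whitehead duality with smash products of dualizable objects. Since both $\mbS^{\rmG}\wedge \rmF_{i+}$ are dualizable, the smash product $\mbS^{\rmG} \wedge \rmF \simeq (\mbS^{\rmG}\wedge \rmF_{1+})\wedge(\mbS^{\rmG}\wedge \rmF_{2+})$ is dualizable as well, and there is a canonical natural identification $\rmD(\mbS^{\rmG} \wedge \rmF) \simeq \rmD(\mbS^{\rmG}\wedge \rmF_{1+}) \wedge \rmD(\mbS^{\rmG}\wedge \rmF_{2+})$. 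Under this identification, and modulo the middle-four-interchange symmetry isomorphism, the evaluation $e_\rmF$ factors as $e_{\rmF_1}\wedge e_{\rmF_2}$, while the coevaluation $c_\rmF = \rmD(e_\rmF)$ factors as $c_{\rmF_1}\wedge c_{\rmF_2}$, precomposed with the unit isomorphism $\mbS^{\rmG} \simeq \mbS^{\rmG}\wedge \mbS^{\rmG}$ coming from the commutative ring structure on the equivariant sphere spectrum.

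Next, I would observe that the diagonal $\Delta_\rmF: \rmF \to \rmF \wedge \rmF$ factors, by the very definition of the diagonal on a smash product, as $\Delta_{\rmF_{1+}}\wedge \Delta_{\rmF_{2+}}$ followed by the middle-four-interchange; and by hypothesis $\rmf = \rmf_{1+}\wedge \rmf_{2+}$ is itself a smash product of $\rmG$-equivariant maps. Substituting these factorizations into the composite displayed in Definition \ref{pretransfer} (namely $c$, then $\tau$, then $id\wedge \rmf$, then $id\wedge \Delta$, then $e\wedge id$), and using the coherence theorem for symmetric monoidal categories to reorganize the accumulated symmetry twists, the full composition rearranges term by term into the smash product $tr'_{\rmF_{1+}}(\rmf_{1+}) \wedge tr'_{\rmF_{2+}}(\rmf_{2+})$, as desired.

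For the trace identity, I would compose the pre-transfer identity with the projection $\mbS^{\rmG}\wedge \rmF \to \mbS^{\rmG}$, noting that this projection itself factors as the smash product of the two projections $\mbS^{\rmG}\wedge \rmF_{i+} \to \mbS^{\rmG}$ followed by the multiplication map $\mbS^{\rmG}\wedge \mbS^{\rmG} \to \mbS^{\rmG}$ of the equivariant sphere spectrum; this immediately yields $\tau_\rmF(\rmf) = \tau_{\rmF_{1+}}(\rmf_{1+}) \wedge \tau_{\rmF_{2+}}(\rmf_{2+})$. The main obstacle is purely diagrammatic bookkeeping: one must keep careful track of the symmetry isomorphisms (especially the middle-four-interchanges) that appear at each stage and verify that they assemble coherently, so that the composite equals the smash product of component pre-transfers on the nose. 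Since the general theory of Spanier–Whitehead duality in symmetric monoidal categories is developed in \cite{DP}, no essentially new ingredient is required beyond the monoidal coherence already set up in Section \ref{equiv.nonequiv.spectra.0} and the fact that $\widetilde \rmU$ preserves smash products, duals and evaluation/coevaluation maps, which reduces the equivariant verification to the underlying non-equivariant one.
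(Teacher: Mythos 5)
Your proposal is correct and follows essentially the same strategy as the paper's proof: factor the evaluation $e_\rmF$ and coevaluation $c_\rmF$ as smash products $e_{\rmF_{1+}}\wedge e_{\rmF_{2+}}$ and $c_{\rmF_{1+}}\wedge c_{\rmF_{2+}}$ up to the middle-four-interchange $\tau$, observe that the diagonal and the self-map $\rmf$ also decompose, and assemble the composite in Definition~\ref{pretransfer} via monoidal coherence. Your account is somewhat more explicit about the role of the diagonal and the trace projection than the paper's, but there is no difference in substance.
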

\begin{proof} A key point to observe is that the evaluation $e_{\rmF}: D(\rmF) \wedge \rmF \ra \mbS^{\rmG}$ is given by starting with 
\[e_{\rmF_{1+}} \wedge e_{\rmF_{2+}}: D(\rmF_{1+}) \wedge \rmF_{1+} \wedge D(\rmF_{2+}) \wedge \rmF_{2+} \ra \mbS^{\rmG} \wedge \mbS^{\rmG} \simeq \mbS^{\rmG},\]
and by precomposing it
with the map $D(\rmF) \wedge \rmF = D(\rmF_{1+} \wedge \rmF_{2+}) \wedge \rmF_{1+} \wedge \rmF_{2+} {\overset {\tau} \ra} D(\rmF_{1+}) \wedge {\rmF}_{1+} \wedge D(\rmF_{2+})\wedge \rmF_{2+} $, where $\tau$ is the
obvious map that interchanges the factors. Similarly the co-evaluation map
\[c: \mbS^{\rmG} \simeq \mbS^{\rmG} \wedge \mbS^{\rmG} {\overset {c_{\rmF_{1+}} \wedge c_{\rmF_{2+}}} \ra} \rmF_{1+}\wedge D(\rmF_{1+}) \wedge \rmF_{2+} \wedge D(\rmF_{2+})\]
provides the co-evaluation map for $\rmF$. The multiplicative 
property of the pre-transfer follows readily from the above two observations as well as from the definition of the pre-transfer as in Definition ~\ref{pretransfer.3}.
 These prove the statements when $\rmF_+ = \rmF_{1+} \wedge \rmF_{2+}$. The corresponding statements when
 $\rmF_2$ is already a pointed simplicial presheaf may be proven along entirely similar lines.
\end{proof}
\vskip .1cm
Let $\rmB$ denote a smooth quasi-projective scheme over the given base scheme $\Speck$. 
Let $\rmE \ra \rmB$ denote a  $\rmG$-torsor (in the given topology, which could be either the Zariski, Nisnevich, or \'etale) 
for the action of a linear algebraic group $\rmG$.
\vskip .2cm
Let $\cP_1$, $\cP_2$ denote $\rmG$-equivariant pointed simplicial presheaves on $\Speck$.
We may assume $\cP_1$ is a functorial cofibrant replacement of the given $\cP_1$, and $\cP_2$ is a functorial fibrant replacement 
of $\cP_2$ in the given model structure on the category of non-equivariant 
simplicial presheaves, so that we let $[\cP_1, \cP_2] = \pi_0(\Map(\cP_1, \cP_2))$
where $\Map$ denotes the simplicial mapping space. One may recall that, according to \cite[Proposition 2.5]{CJ23-T1},
both $\cP_1$ and $\cP_2$ are $\rmG$-equivariant simplicial presheaves.
\begin{proposition}
 \label{pairing.hom.cats} 
 \begin{enumerate}[\rm(i)]
\item Given $\rmG$-equivariant pointed simplicial presheaves $\cP_1, \cP_2$ on $\Speck$, one obtains a natural map
\[ [\cP_1, \cP_2] \ra [ \rmE{\underset {\rmG} \times}\cP_1, \rmE{\underset {\rmG} \times}\cP_2]\]
where the $[\quad, \quad]$ is defined above and the $[\quad, \quad]$ on the right denotes the Hom in the homotopy category of simplicial presheaves over 
$\rmB$ as in \cite[Terminology 2.3]{CJ23-T1}.
Moreover the quotient construction $\rmE{\underset {\rmG} \times}\cP_i$ is carried out  as in \cite[(8.3.6)]{CJ23-T1}, that is,
 when $\rmG$ is special as a linear algebraic group, the quotient is taken on the Zariski (or Nisnevich) site of $\rmE$, while
 when $\rmG$ is not special, it is taken on the \'etale site and then followed by a derived push-forward to the
 Nisnevich site. 
\vskip .1cm
\item Given pointed simplicial presheaves $\cQ_i$, $i=1,2,3,4$, over $\rmB$, we obtain a pairing:
\[ [\cQ_1, \cQ_2] \times [\cQ_3, \cQ_4] \ra [\cQ_1 {\underset {\rmB} \wedge} \cQ_2, \cQ_3 {\underset {\rmB} \wedge} \cQ_4]\]
where the $[\quad, \quad ]$ denotes the Hom in the homotopy category of simplicial presheaves over $\rmB$.
\end{enumerate}
\end{proposition}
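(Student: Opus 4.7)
The plan is to deduce (i) from the fact that $\rmE\times_{\rmG}(-)$ can be promoted to a left Quillen functor from a suitable model structure on pointed $\rmG$-equivariant simplicial presheaves on $Spec\, k$ to the corresponding model structure on pointed simplicial presheaves over $\rmB$. First I would construct the functor at the underlying-category level: when $\rmG$ is special, the quotient $\rmE\times_{\rmG}\cP_i$ is defined as the coequalizer of the two obvious maps $\rmE\times\rmG\times\cP_i \rightrightarrows \rmE\times\cP_i$ on the big Nisnevich site, which is manifestly functorial in $\cP_i$. Equipped with the natural projection to $\rmB$, this lands in pointed simplicial presheaves over $\rmB$. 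A $\rmG$-equivariant map $f:\cP_1\to\cP_2$ passes to the quotients to give a morphism $\mathrm{id}_{\rmE}\times_{\rmG}f$ over $\rmB$, which is the desired assignment at the level of underlying categories.

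To descend to homotopy categories, I would invoke the injective local model structure on $\rmG$-equivariant simplicial presheaves (as in \cite[section 3]{CJ19}) together with the analogous local model structure on pointed simplicial presheaves over $\rmB$. In the special case the torsor $\rmE\to\rmB$ is locally trivial in the Nisnevich topology, and $\rmE\times_{\rmG}(-)$ preserves local weak equivalences, so it descends directly. In the non-special case, I would adopt the conventions of Conventions~\ref{add.transf.convents} and subsection~\ref{more.rigidity}: form the quotient on the big \'etale site, where the torsor is locally trivial, and then apply the derived pushforward $R\epsilon_*$ back to the Nisnevich site. In either case, the induced functor on homotopy categories sends morphisms to morphisms, which produces the required natural map on Hom-sets.

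For (ii), the pairing is essentially the functoriality and symmetric monoidality of the relative smash product $\wedge_{\rmB}$: given a pair of maps as in the statement, taking their (appropriately interpreted) smash over $\rmB$ produces a morphism of the indicated form in the underlying category of pointed simplicial presheaves over $\rmB$. To pass to the homotopy category, I would appeal to the standard fact that $\wedge_{\rmB}$ is a left Quillen bifunctor in the injective local model structure over $\rmB$; the pushout-product axiom then guarantees that the assignment respects the homotopy relation and hence descends to a pairing of homotopy classes.

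The main obstacle is the verification in the non-special case, where the mixing of the \'etale quotient with the Nisnevich derived pushforward $R\epsilon_*$ can fail to commute strictly with the relative smash product, so that the pairing in (ii) might not be compatible on the nose with the Borel construction in (i). The remedy is to work throughout with fibrant replacements in the local model structures on both sites, as set up in subsection~\ref{more.rigidity}; all constructions then become homotopy invariant, and both parts of the proposition follow formally from the model-categorical framework recalled in Section 2.
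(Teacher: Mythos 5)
Your proof takes essentially the same route as the paper's, which simply chooses $\cP_1$ (resp. $\cQ_1,\cQ_3$) cofibrant and $\cP_2$ (resp. $\cQ_2,\cQ_4$) fibrant, identifies $[\quad,\quad]=\pi_0(\Map(\quad,\quad))$, and then declares the functoriality of $\rmE\times_{\rmG}(-)$ and of $\wedge_{\rmB}$ to be clear. You spell out more of the underlying machinery (the coequalizer description of the Borel quotient, the left-Quillen and pushout-product properties), which is correct but not a different argument.
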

\begin{proof} (i) One may recall that, according to \cite[Proposition 2.5]{CJ23-T1},
the functorial cofibrant and fibrant replacements of $\cP_i$, $i=1,2$, in the given model structure on the category of non-equivariant 
simplicial presheaves are $\rmG$-equivariant simplicial presheaves. Therefore, one may apply the Borel construction to them, and
 then the assertion in (i) is clear.
\vskip .1cm
For (ii), we may assume $\cQ_1, \cQ_3$ are cofibrant and $\cQ_2, \cQ_4$ are fibrant in the given model structure, so that $[\cQ_1, \cQ_2] = \pi_0(\Map(\cQ_1, \cQ_2))$
 and $[\cQ_3, \cQ_4] = \pi_0(\Map(\cQ_3, \cQ_4))$, where $\Map$ denotes the corresponding simplicial mapping space. Then the assertion in (ii)
is also clear.
\end{proof}

\section {\bf Mayer-Vietoris and additivity of the transfer}

 We establish the additivity and Mayer-Vietoris property for the transfer, but {\it only} for generalized
  cohomology theories defined with respect to spectra that have {\it the rigidity property} as discussed
  in Definition ~\ref{rigid.prop}. This will also provide a second proof of the additivity and Mayer-Vietoris property
  for the trace, but for generalized cohomology theories defined with respect to spectra that have the rigidity property.
 \begin{definition} (Rigidity) \index{rigidity}
\label{rigid.prop}
Assume the base field $k$ is infinite.
 Let $\rmM$ denote a motivic spectrum, so that its presheaves of homotopy groups are all $\ell$-primary torsion for 
 a fixed prime $\ell$ different from the characteristic of the base field $k$. We will say that $\rmM$ has {\it the rigidity property} if for any smooth scheme
$\rmX$ of finite type over the given base field $k$, and a point $x$ of $\rmX$, with residue field $\k(x)$ and Hensel local ring at $x$ given by
$\O_{\rmX, \x}^h$, the  map ${\rm Spec}\, k(x) \ra {\rm Spec} \, \O_{\rmX, \x}^h$ induces a weak-equivalence: $\Gamma ({\rm Spec} \, \O_{\rmX, \x}^h, \rmM) \simeq \Gamma({\rm Spec} \, \k(\x), \rmM)$. 
\vskip .1cm
Let $\rmM$ denote a spectrum in $\Spt(\k_{et})$ so that its presheaves of homotopy groups are all $\ell$-primary torsion for 
 a fixed prime $\ell$ different from the characteristic of the base field $k$. Assume further that the field $\k$ has finite $\ell$-cohomological dimension. We will say that $\rmM$ has {\it the rigidity property} if for any smooth scheme
$\rmX$ of finite type over the given base field $k$, and a point $x$ of $\rmX$, with residue field $k(x)$ and  strict Hensel local ring at $x$ given by
$\O_{\rmX, \x}^{sh}$, with residue field ${\overline {k(x)}}$, the  following conditions are satisfied: (i) the 
map ${\rm Spec}\, {\overline {\k(x)}} \ra {\rm Spec} \, \O_{\rmX, \x}^{sh}$ induces a weak-equivalence: $\Gamma ({\rm Spec} \, \O_{\rmX, \x}^{sh}, \rmM) \simeq \Gamma({\rm Spec} \, {\overline {k(\x)}}, \rmM)$, and (ii) for any inclusion $\k \ra K$ of separably closed fields, the induced map $\Gamma ({\rm Spec} \, \k, \rmM) \ra \Gamma ({\rm Spec} \, K, \rmM)$
is a weak-equivalence.
\end{definition} 
\begin{remark}
The following {\it rather subtle point} is the main role of rigidity in our work. Given a presheaf $\rmM$ on the {\it big Nisnevich site}
over $\k$, there is no apriori reason for the cohomology with respect to $\rmM$ for the Henselization of a given scheme
$\rmX$ along a closed sub-scheme $\rmZ$ to be isomorphic to the cohomology of $\rmZ$ with respect to $\rmM$. This issue
does not arise if $\rmM$ is a sheaf on the {\it small} Nisnevich site of $\rmX$. The assumption that $\rmM$ has the
rigidity property, then does ensure that the above cohomologies are isomorphic.
\end{remark}
\vskip .1cm
        The first step in the corresponding {\it transfer} is to apply a suitable form of the Borel construction $\rmE{\underset {\rmG} \times} \quad$ to the pre-transfer, where
        $\rmE \ra \rmB$ is a $\rmG$-torsor, for the linear algebraic group $\rmG$.
The  transfer $\tr(\rmf)$ defined on generalized equivariant motivic cohomology theories is obtained by starting with
 the $\rmG$-equivariant pre-transfer, feeding it to a suitable form of the Borel construction, and then performing various modifications to it
 as discussed in detail in \cite[8.4. Construction of the transfer]{CJ23-T1}. 

We begin with the following results. 
\begin{construction}
 \label{more.rigidity}
{\rm One may make use of the following localization technique to force rigidity, especially when doing the Borel construction with respect to linear algebraic groups that are {\it not special}.
Let $\{\rmZ \ra \rmX^h_{\rmZ}\}$ denote the family of Henselizations of smooth schemes $\rmX$ along a closed smooth subscheme $\rmZ$.
Now one may enlarge the generating trivial cofibrations on the stable motivic homotopy category $\Spt(\k_{\rm mot})$ by including the $\T$-suspension spectra of the above family of
maps among the generating trivial cofibrations. In the resulting model category, one can see that the fibrant objects are exactly the fibrant
spectra in $\Spt(\k_{\rm mot})$ that have the rigidity property. We will denote the corresponding model category of motivic spectra by $\Spt(\k_{mot,r})$.
Let $\epsilon^*: \Spt(\k_{mot, r}) \ra \Spt(\k_{et})$ denote the pull-back to the \'etale site.
In order that $\epsilon^*$ be a left-Quillen functor, it is clear that we need to enlarge the generating trivial cofibrations on $\Spt(\k_{et})$
by adding maps of the form: $\{\epsilon^*({\Sigma^{\infty}_{\T}}\rmZ) \ra \epsilon^*({\Sigma^{\infty}_{\T}}\rmX^h_{\rmZ})\}$. We will denote the resulting model category of \'etale spectra by $\Spt(\k_{et,r})$.
In a similar manner, on incorporating the action of a linear algebraic group $\rmG$, we obtain the left-Quillen functor on the corresponding model categories:
\be \begin{equation}
\label{invert.hensel}
\epsilon^*:{\widetilde {\Spt}}^{\group}(\k_{mot, r}) \ra {\widetilde {\Spt}}^{\group}(\k_{et, r})
\end{equation} \ee
with its right adjoint given by} $\epsilon_*$.
\end{construction}

\begin{proposition} \index{rigidity: factorization of the diagonal map}
 \label{diagonal.map}
 Assume  that $\rmZ$ is a smooth closed $\rmG$-subscheme of the smooth $\rmG$-scheme $\rmX$, where
 $\rmG$ is a linear algebraic group, and that $\rmX_{\rmZ}^{\rmh}$ denotes the Henselization of $\rmX$ along $\rmZ$.
 \begin{enumerate}[\rm(i)]
\item Then one obtains the commutative square:
 \[\xymatrix{{\rmX_{\rmZ}^h/ (\rmX_{\rmZ}^h-\rmZ)} \ar@<1ex>[d]^{\Delta^h} \ar@<1ex>[r] & {\rmX/\rmU} \ar@<1ex>[d]^{\Delta}\\
              {(\rmX_{\rmZ}^h/ (\rmX_{\rmZ}^h-\rmZ))\wedge \rmX_{\rmZ,+}^h} \ar@<1ex>[r] & {\rmX/\rmU \wedge \rmX_+}}
 \]
 where the top horizontal map induces a motivic stable equivalence on the associated suspension spectra and $\Delta^h$, $\Delta$ denote the
 corresponding diagonal maps. Taking the smash product with the sphere spectrum $\mbS^{\rmG}$, one obtains a similar
 commutative square, where each term above is replaced by the smash product with $\mbS^{\rmG}$.
 \vskip .1cm
\item Let $\rmp: \rmE \ra \rmB$ denote a $\rmG$-torsor for a linear algebraic group $\rmG$, and let $\epsilon: \Speck_{et} \ra \Speck_{Nis}$ denote the map of sites
  from the big \'etale site of $\Speck$ to the corresponding big Nisnevich site.
 Then the commutative diagram in (i) induces the commutative diagram:
 \[\xymatrix{{\rmE{\underset {\rmG} \times}(\mbS^{\rmG}\wedge(\rmX_{\rmZ}^h/ (\rmX_{\rmZ}^h-\rmZ)))} \ar@<1ex>[d]^{\rmE{\underset {\rmG} \times}(\mbS^{\rmG} \wedge \Delta^h)} \ar@<1ex>[r] & {\rmE{\underset {\rmG} \times}(\mbS^{\rmG}\wedge (\rmX /\rmU))} \ar@<1ex>[d]^{\rmE{\underset {\rmG} \times}(\mbS^{\rmG} \wedge \Delta)}\\
              {\rmE{\underset {\rmG} \times}(\mbS^{\rmG}\wedge(\rmX_{\rmZ}^h/ (\rmX_{\rmZ}^h-\rmZ)\wedge \rmX_{\rmZ,+}^h))} \ar@<1ex>[r] & {\rmE{\underset {\rmG} \times}(\mbS^{\rmG}\wedge (\rmX /\rmU \wedge \rmX_+))}}
 \]
 so that the map in the top row is again a weak-equivalence. Here the quotient construction $\rmE{\underset {\rmG} \times} \quad$ is carried out as 
 follows: 
 when $\rmG$ is special as a linear algebraic group, the quotient is taken on the big Zariski (or the big Nisnevich) site, while
 when $\rmG$ is not special, it is taken after applying ${\rm L}\epsilon^*$  to the model category ${\widetilde {\Spt}}^{\group}(\k_{et, r})$ ( on the big \'etale site)
 and then followed by the derived push-forward $\rmR\epsilon_*$ to the model category ${\widetilde {\Spt}}^{\group}(\k_{mot, r})$ (on the big Nisnevich site).
 \vskip .1cm
 \item Assume the situation in {\rm (i)}. Let $\rmM$ denote a fibrant motivic spectrum that has the rigidity property as 
 in Definition ~\ref{rigid.prop}.  For any spectrum $\X \in {\widetilde {\Spt}}^{\rmG}(\k_{\rm mot})$, we let $\X^{\rmV}$ denote the simplicial 
 presheaf $\X(\rmT_{\rmV})$ and 
 $\rmA= (\mbS^{\rmG} \wedge \rmX/\rmU) \wedge \rmD(\mbS^{\rmG} \wedge \rmX/\rmU)$, 
$\tau\rmA = \rmD(\mbS^{\rmG} \wedge \rmX/\rmU) \wedge (\mbS^{\rmG} \wedge \rmX/\rmU)$. Then, one obtains the weak-equivalence
 \be \begin{equation}
 \label{Hensel.weak.equiv}
 \RHom(\{{\rmE{\underset {\rmG} \times}(\tau \rmA \wedge  \rmX_{\rmZ,+}^h}))^{\rmV} \wedge _{\rmB}{\rm S}(\eta^{\rmV}\oplus 1))/s|\rmV\}, \rmM) \simeq \RHom( \{{\rmE{\underset {\rmG} \times}(\tau \rmA \wedge \rmZ_+}))^{\rmV} \wedge_{\rmB}S(\eta^{\rmV}\oplus 1))/s|\rmV\}, \rmM).
 \end{equation} \ee
 where $\RHom$ denotes the derived internal hom in $\Spt(\k_{mot})$, and $s$ denotes the obvious section.
 \vskip .1cm
 \item Let $\rmM$ denote a fibrant motivic spectrum that has the rigidity property as 
 in Definition ~\ref{rigid.prop}. Then corresponding results as in (iii) hold for the spectrum ${\rm L}\epsilon^*(\rmM)$ when the quotients
 appearing above are replaced by the quotients in the \'etale topology of the corresponding sheaves pulled back to the \'etale site.
 \end{enumerate}
\end{proposition}
\begin{proof} The commutative square in (i) follows readily from the cartesian square:
\be \begin{equation}
     \label{Hensel.1}
     \xymatrix{{\rmZ} \ar@<1ex>[r]^{id} \ar@<1ex>[d] & {\rmZ} \ar@<1ex>[d]\\
               {\rmX^h_{\rmZ}} \ar@<1ex>[r] & {\rmX}}
\end{equation} \ee
Next one may recall from \cite[p. 117, Lemma 2.27]{MV} and \cite[p. 115, Theorem 2.23]{MV} the weak-equivalences:
\be \begin{equation}
     \label{def.norm.cone}
     \rmX/\rmU \simeq Th(\cN) \simeq \rmX^h_{\rmZ}/(\rmX^h_{\rmZ} - \rmZ).
\end{equation} \ee
(In fact, the normal bundle $\cN$ pulls back to the normal bundle associated to the closed immersion of $\rmZ$ in $\rmX_{\rmZ}^h$.
Implicit in the purity theorem \cite[p. 115, Theorem 2.23]{MV} is the technique of
deformation to the normal-bundle: see \cite[section 2]{Verd}, or 
\cite[p. 116, Lemma 2.26]{MV}. \index{deformation to the normal bundle})
This proves that the map in the top row of the square in (i), namely that the map $\rmX^h_{\rmZ} /(\rmX^h_{\rmZ} - \rmZ) \ra \rmX/(\rmX- \rmZ)$
is a weak-equivalence. Therefore, these complete the proofs of the statements in (i).
\vskip .1cm
Next we consider the statements in (ii). The functoriality of the Henselization as in Lemma ~\ref{funct.Hensel}
shows that the action of the group $\rmG$ on $\rmX$ and $\rmZ$ induces an action by $\rmG$ on $\rmX^h_{\rmZ}$.
This observation, along with the observations in (i) provides the commutative square in (ii). The fact that the
 top row in the corresponding square is also a weak-equivalence follows from the fact that the 
 top row of the square in (i) is also a weak-equivalence, making use of the appropriate quotient construction
 utilized there, as discussed in (ii).
\vskip .1cm
When the group $\rmG$ is special, the torsor $\rmE \ra \rmB$ trivializes on a Zariski open cover. Therefore, the weak-equivalence in ~\eqref{Hensel.weak.equiv} follows from the above observations, in view of Lemma ~\ref{Cech.hypercover}(i) and
the weak-equivalence provided by Theorem ~\ref{coh.tub.nbd} as the spectrum $\rmM$ is assumed to have the
  rigidity property.  In general, the torsor $\rmE \ra \rmB$ only trivializes on
  an \'etale cover. Then the Borel construction makes use of the quotient construction in $\Spt(\k_{et, r})$ after applying ${\rm L}\epsilon ^*$. The discussion in the first paragraph of 
  the Construction ~\ref{more.rigidity} shows that this preserves weak-equivalences and so does the right derived functor $\rmR\epsilon_*$ in
  ~\eqref{invert.hensel}. These prove the statement in (iii). 

\vskip .1cm 

Next we consider the statement in (iv). This follows readily, since $\epsilon^*(\rmM)$ also has the rigidity property, the torsor $\rmE \ra \rmB$ is locally trivial
 in the \'etale topology and in view of Theorem ~\ref{coh.tub.nbd}(iv). 
 \end{proof}
 \begin{remark}
 One can see that the results of the last Proposition strongly depend on the rigidity property for spectra as well as properties
 of Henselization of smooth schemes along closed smooth subschemes. We provide a detailed discussion on the rigidity property in Propositions ~\ref{rigid.props.1}, ~\ref{slices.rigid}, Corollary ~\ref{slices.rigid} and
 Lemma ~\ref{rigid.et.Nis}, 
 later on in this chapter. Henselization along closed subschemes also leads us to discuss what we call {\it motivic tubular neighborhoods}: see
 Definition ~\ref{tub.nbds} and Theorem ~\ref{coh.tub.nbd}.
  \end{remark}

\vskip .1cm
  \begin{conventions}
   \label{add.transf.convents}
 \begin{enumerate}[\rm(i)]
  \item 
$p \ge 0$ will denote the characteristic of the base field $k$. 
\item Throughout the following theorem, its proof and various applications, such as 
 Theorem ~\ref{add.transf}, Proposition ~\ref{fixed.pts.proj.case}, Theorem ~\ref{double.coset.1} and Corollaries ~\ref{double.coset.2} through  ~\ref{double.coset.5},  we will fix a commutative motivic ring spectrum $\cE$,
   with $\cE^{\rmG}$ its lift to a $\rmG$-equivariant spectrum, all chosen as discussed in \cite[(4.0.24)]{CJ23-T1}.
   (Recall this means that in positive characteristics $p$, $\cE$ denotes any one of the spectra 
   ${\Sigma_{\T}^{\infty}}[p^{-1}]$, ${\Sigma^{\infty}_{\T, (\ell)}}$, ${\Sigma_{\T}^{\infty}}\compl_{\ell}$ with $\cE^{\rmG}$ denoting its 
lift to the equivariant spectrum  $\mbS^{\rmG}[p^{-1}]$, $\mbS^{\rmG}_{(\ell)}$, $\widehat {\mbS^{\rmG}}_{\ell}$, \res.)
  $\rmM$ will always denote module spectra over the given ring spectrum $\cE$.
 \item
  Moreover, whenever we invoke any one of three the basic contexts for the transfer as in  \cite[section 2]{CJ23-T2} in the rest of this paper, we 
  will assume the self-map $\rmf:\rmX\ra \rmX$ there is the identity map, and the scheme $\rmY$ there is the base scheme $Spec \, \k$: that is, the additivity for the 
  transfer and its various applications will be discussed only with $\rmf =id_{\rmX}$ and where $\rmY = Spec \, \k$.
\item
In order to ensure that the Borel construction for non-special groups
 (which has to be carried out by pull-back to the \'etale site) is compatible with the notions of rigidity, it is often convenient (though not essential) to
  adopt the model structures
 as discussed in section ~\ref{more.rigidity}. One may assume this implicitly throughout the following theorem and its various applications.
 \end{enumerate}
 \end{conventions} 
 \vskip .1cm 
  \begin{theorem} (Mayer-Vietoris and Additivity for the transfer) \index{transfer: Mayer-Vietoris} \index{transfer: additivity}
\label{add.transf} 
\begin{enumerate}[\rm(i)]
\item Let $\rmX$ denote a smooth $\rmG$-scheme and let $i_j:\rmU_j \ra \rmX$, $j=1,2$ denote the open immersion of two Zariski open subschemes of $\rmX$ 
which are both  $\rmG$-stable, with $\rmX= \rmU_1 \cup \rmU_2$. 
\vskip .1cm
Then adopting the 
 terminology above, (that is, where $\tr_{\rmP}'^{\rmG}$ denotes the $\rmG$-equivariant pre-transfer associated to the
 $\rmG$-simplicial presheaf $\rmP$ and $\tau_{\rmP}^{\rmG}$ denotes the corresponding $\rmG$-equivariant trace)
 \[ \tr_{\rmX}'^{\rmG} = i_1\circ tr_{\rmU_1}'^{\rmG} + i_2 \circ tr_{\rmU_2}'^{\rmG} - i_3 \circ tr_{\rmU_1 \cap \rmU_2}'^{\rmG} \mbox{ and } \tau_{\rmX_+}^{\rmG} = \tau_{\rmU_{1+}}^{\rmG} + \tau_{\rmU_2}^{\rmG} - \tau_{(\rmU_1 \cap \rmU_2)_+}^{\rmG}\]
in case $char (k) =0$. In case $char (k) =p>0$, we obtain
 \[ \tr_{\rmX, {\cE^{\rmG}}}'^{\rmG} = i_1\circ tr_{\rmU_1, {\cE^{\rmG}}}'^{\rmG} + i_2 \circ tr_{\rmU_2, {\cE^{\rmG}}}'^{\rmG} - i_3 \circ tr_{\rmU_1 \cap \rmU_2, {\cE^{\rmG}}}'^{\rmG} \mbox{ and } \tau_{\rmX_+, {\cE^{\rmG}}}^{\rmG} = \tau_{\rmU_{1+}, {\cE^{\rmG}}}^{\rmG} + \tau_{\rmU_{2+}, {\cE^{\rmG}}}^{\rmG}- \tau_{(\rmU_1 \cap \rmU_2)_+, {\cE^{\rmG}}}^{\rmG}.\]
 \vskip .1cm
 We also obtain in all characteristics,
 \[\tr_{\rmX} = i_1\circ tr_{\rmU_1} + i_2 \circ tr_{\rmU_2} - i_3 \circ tr_{\rmU_1 \cap \rmU_2}\]
which denote the induced transfers as in any one of the three basic contexts discussed in \cite[section 2]{CJ23-T2}, with respect to a motivic spectrum $\rmM$.
 \vskip .1cm \noindent
\item Let $i: \rmZ \ra \rmX$ denote a closed immersion of smooth $\rmG$-schemes 
 with $j: \rmU \ra \rmX$ denoting the corresponding open complement.  In this context we will let
 \begin{equation}
 \label{trXU}
 \tr_{\rmX/\rmU}'^{\rmG}: \mbS^{\rmG} \ra \mbS^{\rmG} \wedge \rmX_+
 \end{equation}
 denote the pre-transfer defined as in Definition ~\ref{pretransfer.3} with $\rmP$ ($\rmC$) there denoting $\rmX/\rmU$ ($\rmX$, \res).
 Then adopting the 
 terminology above and in Remark ~\ref{clar.add.tr}, 
\[ \tr_{\rmX}'^{\rmG} = j\circ \tr_{\rmU}'^{\rmG}+ \tr_{\rmX/\rmU}'^{\rmG},  \mbox{ and } \tau_{\rmX_+}^{\rmG} = \tau_{\rmU_+}^{\rmG} + \tau_{(\rmX/\rmU)}^{\rmG}\]
in case $char (k) =0$. In case $char (k) =p>0$, we obtain
\[ \tr_{\rmX, {\cE^{\rmG}} }'^{\rmG} = j\circ \tr_{\rmU, {\cE^{\rmG}}}'^{\rmG}+ \tr_{\rmX/\rmU, {\cE^{\rmG}}}'^{\rmG},  \mbox{ and } \tau_{\rmX_+, {\cE^{\rmG}}}^{\rmG} = \tau_{\rmU_+, {\cE^{\rmG}}}^{\rmG} + \tau_{(\rmX/\rmU), {\cE^{\rmG}}}^{\rmG},\]
where the pre-transfer $\tr_{\rmX/\rmU, \cE^{\rmG}}'^{\rmG}: \cE^{\rmG} \ra \cE^{\rmG} \wedge \rmX_+$ is defined as in
Definition ~\ref{generalized.traces} with $\rmP$ ($\rmC$) there denoting $\rmX/\rmU$ ($\rmX_+$, \res).
\vskip .1cm
 Again adopting the terminology as in Remark ~\ref{clar.add.tr}, we also obtain in all characteristics,
\[ \tr_{\rmX} = j\circ \tr_{\rmU}+ \tr_{\rmX/\rmU}\]
which denote the  transfers induced by the corresponding pre-transfers as in any one of the three basic contexts discussed \cite[section 2]{CJ23-T2}, with respect to a motivic spectrum $\rmM$.  
\vskip .1cm 
For the remainder of this theorem, we will assume that the base field $k$ is infinite and that it contains a $\sqrt{-1}$. \index{$\sqrt{-1}$}
\vskip .1cm \noindent
\item Let $\rmM$ denote a motivic spectrum that has the rigidity property (as discussed in Definition ~\ref{rigid.prop}). Let $\cN$ denote the normal bundle associated
 to the closed immersion $i$ as in (ii), and let $\Th(\cN)$ denotes its Thom-space. Then we obtain in all characteristics
\[ \tr_{\rmX/\rmU} = \tr_{\Th(\cN)} = i \circ \tr_{\rmZ},\]
 \vskip .1cm \noindent
 where the following notational conventions hold: 
 \begin{itemize}
 \item $\tr_{\rmX/\rmU}$, $ \tr_{\Th(\cN)}$, $ \tr_{\rmZ}$
 denote the transfers induced on generalized motivic cohomology with respect to the motivic spectrum $\rmM$ as in \cite[section 2]{CJ23-T2}, and
 \item 
 where the
  corresponding $\rmG$-equivariant pre-transfers are defined with respect to the ring spectrum $\mbS^{\rmG}$ in characteristic $0$ and
  with respect to one of the ring spectra $\mbS^{\rmG}[p^{-1}]$, $\mbS^{\rmG}_{(\ell)}$, $\widehat {\mbS^{\rmG}}_{\ell}$, in case $char (k) = p>0$, with $\ell$ is a prime 
 different from $p$.
 \end{itemize}
 \vskip .1cm \noindent
 \item  Again let $\rmM$ denote a motivic spectrum that has the rigidity property (as discussed in Definition ~\ref{rigid.prop}). Let $\{\rmS_{\alpha}|\alpha\}$ denote a stratification of the smooth scheme $\rmX$ into finitely many locally closed and smooth $\rmG$-stable subschemes 
 $\rmS_{\alpha}$. For each $\alpha$, let $i_{\alpha}: \rmS_{\alpha} \ra \rmX$ denote the corresponding locally closed immersion. Then  
 one obtains in all characteristics
\[ \tr_{\rmX} = {\rm \Sigma} _{\alpha} i_{\alpha} \circ \tr_{\rmS_{\alpha}},\]
 \vskip .1cm \noindent
where the following notational conventions hold: 
\begin{itemize}
\item $\tr_{\rmX}$, $ \tr_{\rmS_{\alpha}}$
denote the transfers induced on generalized motivic cohomology theories with respect to the motivic spectrum $\rmM$ as in the basic contexts in \cite[section 2]{CJ23-T2}, and
\item where the
  corresponding $\rmG$-equivariant pre-transfers are defined with respect to the ring spectrum $\mbS^{\rmG}$ in characteristic $0$ and
  with respect to one of the ring spectra $\mbS^{\rmG}[p^{-1}]$, $\mbS^{\rmG}_{(\ell)}$, $\widehat {\mbS^{\rmG}}_{\ell}$, in case $char (k) = p>0$, and where $\ell$ is a prime 
 different from $p$.
 \end{itemize}
\vskip .1cm \noindent
\item Let $\cE^{\rmG}$ denote a commutative ring spectrum in ${\Spt}^{\rmG}(\k_{\rm mot})$, whose presheaves of homotopy groups are  all $\ell$-primary torsion for a
 fixed prime $\ell \ne char (k)$, and
let $\epsilon^*(\cE^{\rmG})$ denote the corresponding spectrum in $\Spt^{\rmG}(\k_{et})$. 
  Then the 
 	results corresponding to (i) through (ii) also hold if $\tr_{\rmZ}'^{\rmG}$ ($\tau_{\rmZ_+}^{\rmG}$) is
 	replaced by $\tr_{\rmZ_+ \wedge \epsilon^*(\cE^{\rmG})}'^{\rmG}$ ($\tau_{\rmZ_+ \wedge \epsilon^*(\cE^{\rmG})}^{\rmG}$, \res). Assume next that $\rmM$ is a module spectrum over $\cE= (i^*\circ {\widetilde {\mathbb P}} \circ {\widetilde {\rm U}})^*(\cE^{\rmG})$ 
 	(which is the non-equivariant spectrum obtained from $\cE^{\rmG}$ as in \cite[Definition 4.13]{CJ23-T1} so that $\rmM$ 
 has the rigidity property as in Definition ~\ref{rigid.prop}. Then
 	the results corresponding to (i) through (iv) hold for the transfer $tr_Z$ in generalized \'etale cohomology theories defined with respect to the spectrum $\epsilon ^*(\rmM)$.
 \end{enumerate}
 \end{theorem}
 \vskip .2cm
As is shown in \cite{L} and \cite{LMS}, the additivity and Mayer-Vietoris property of the transfer can be deduced by showing that the corresponding pre-transfer (that is, the transfer
 for the trivial group) is additive, or equivalently, has what is often called the Mayer-Vietoris property.\footnote{In our setting, we also need to
 invoke rigidity as discussed in Definition ~\ref{rigid.prop}.}  We establish such a property, by systematically verifying that the same general strategy carries over to the motivic and \'etale framework. 
\vskip .2cm
\begin{proof}
Once again, we will explicitly discuss only the case where the ring spectrum $\cE^{\rmG}$
 ($\cE$) is the equivariant sphere spectrum $\mbS^{\rmG}$ (the sphere spectrum ${\Sigma_{\T}^{\infty}}$, \res), as proofs in the other cases follow 
 along the same lines.
 \vskip .1cm
 First we will consider (i), namely the Mayer-Vietoris sequence. For this,  one begins with the stable cofiber sequence 
 \[\mbS^{\rmG}\wedge (\rmU_1\cap \rmU_2)_+ \ra \mbS^{\rmG}\wedge (\rmU_1 \sqcup \rmU_2)_+ \ra \mbS^{\rmG}\wedge (\rmX)_+. \] 
Then one applies
Theorem ~\ref{additivity.tr.1}(1) to it. This proves (i).
 \vskip .1cm
 Next we will consider (ii). One recalls the stable homotopy cofiber sequence (see \cite[p. 115, Theorem 2.23]{MV})
  \be \begin{equation}
    \label{loc.1}
   \mbS^{\rmG}\wedge \rmU_+ \ra \mbS^{\rmG} \wedge \rmX_+ \ra \mbS^{\rmG}\wedge (\rmX/\rmU) \simeq \mbS^{\rmG} \wedge \Th(\cN)
  \end{equation} \ee
in the stable motivic homotopy category over the base scheme. The  statement in (ii) follows by applying Theorem ~\ref{additivity.tr.0} to the
 stable homotopy cofiber sequence in ~\eqref{loc.1}. 
\vskip .1cm
Next we consider (iii).  However, as shown below in 
Proposition ~\ref{diagonal.map}, one needs to supplement this with the technique of {\it motivic tubular neighborhoods}. \index{motivic tubular neighborhood}
 \vskip .1cm 
 A key step here is to observe the homotopy commutative diagram: 
 \fontsize{9}{12}
\vskip .1cm 

\be \begin{equation}
   \label{factor.diagram}
   \xymatrix{{(\rmE{\underset {\rmG} \times}(\mbS^{\rmG}\wedge \rmX_+)^{\rmV}\wedge _{\rmB}S(\eta^{\rmV}\oplus 1))/s}  & {(\rmE{\underset {\rmG} \times}(\mbS^{\rmG}\wedge \rmX^h_{Z_+})^{\rmV}\wedge _{\rmB}S(\eta^{\rmV}\oplus 1))/s} \ar@<1ex>[l]^{\tilde i^h \wedge id} \\
              {(\rmE{\underset {\rmG} \times}(\tau \rmA \wedge \mbS^{\rmG}\wedge \rmX_+)^{\rmV}\wedge _{\rmB}S(\eta^{\rmV}\oplus 1))/s} \ar@<1ex>[u]^{(e \wedge id)}  & {(\rmE{\underset {\rmG} \times}(\tau \rmA \wedge \mbS^{\rmG}\wedge \rmX^h_{Z_+})^{\rmV}\wedge _{\rmB}S(\eta^{\rmV}\oplus 1))/s} \ar@<1ex>[l]^{\tilde i^h \wedge id} \ar@<1ex>[u]^{(e \wedge id)} \\
              {(\rmE{\underset {\rmG} \times}(\tau \rmA)^{\rmV}\wedge _{\rmB}S(\eta^{\rmV}\oplus 1))/s} \ar@<1ex>[u]^{id \wedge \Delta \wedge id} \ar@<1ex>[ur]_{id \wedge {\Delta^h} \wedge id}\\
              {(\rmE{\underset {\rmG} \times}\rmA^{\rmV} \wedge _{\rmB}S(\eta^{\rmV}\oplus 1))/s} \ar@<1ex>[u]^{\tau \wedge id }\\
              {(\rmE{\underset {\rmG} \times}\mbS^{\rmG, \rmV} \wedge _{\rmB}S(\eta^{\rmV}\oplus 1))/s} \ar@<1ex>[u]^{c\wedge id}}
\end{equation} \ee
\normalsize

\vskip .1cm 
{\it Here we have adopted the following terminology}:
\begin{enumerate}
 \item  $\rmX_{\rmZ}^h$ denotes the Henselization of the scheme $\rmX$ along $\rmZ$, and $\tilde i^h: \rmX^h_{\rmZ} \ra \rmX$ is the obvious map.
 \item If $\{\rmT_{\rmV}|\rmV\}$ denotes  the $\rmG$-equivariant 
sphere spectrum $\mbS^{\rmG}$, $\eta^{\rmV}$ denotes a vector bundle on $\rmB$ complimentary to the vector bundle $\rmE\times_{\rmG} {\rmV}$: see \cite[8.4. Construction of the transfer, Step 2]{CJ23-T1}. Moreover, ${\rm S}(\eta^{\rmV}\oplus 1)$ denotes the corresponding sphere-bundle.
\item
For any spectrum $\X \in {\widetilde {\Spt}}^{\rmG}(\k_{\rm mot})$, we let $\X^{\rmV}$ denote the simplicial presheaf $\X(\rmT_{\rmV})$ and the 
spectrum $\X$ itself will be denoted $\{\X^{\rmV}|\rmV\}$. Moreover,
we will let $\rmA= (\mbS^{\rmG} \wedge \rmX/\rmU) \wedge \rmD(\mbS^{\rmG} \wedge \rmX/\rmU)$ and let 
$\tau\rmA = \rmD(\mbS^{\rmG} \wedge \rmX/\rmU) \wedge (\mbS^{\rmG} \wedge \rmX/\rmU)$. In addition, we are also assuming that both $\rmB$ and $\rmE$ are {\it smooth affine} schemes, with $\rmE$ a $\rmG$-torsor. 
\item
Let $\X =\{\X^{\rmV}|\rmV\}$ denote a spectrum in $\Spt^{\rmG}(\k_{\rm mot})$. When $\rmG$ is special, 
$(\rmE{\underset {\rmG} \times}(\X)^{\rmV}\wedge _{\rmB}S(\eta^{\rmV}\oplus 1))/s$ denotes just that, while when $\rmG$ is not special,
it denotes the object $\rmR\epsilon_*(\rmE{\underset {\rmG} \times^{et}}\epsilon^*((\X)^{\rmV})\wedge _{\epsilon^*(\rmB)}\epsilon^*(S(\eta^{\rmV}\oplus 1))/s)$.
$s$ denotes the canonical section, and we are taking the quotient using the
quotient construction $\rmE{\underset {\rmG} \times} \quad$ carried out as in \cite[8.4. Construction of the transfer, Step 5]{CJ23-T1}, that is,
 when $\rmG$ is special as a linear algebraic group, the quotient is taken on the Zariski (or Nisnevich) site of $\rmE$, while
 when $\rmG$ is not special, it is taken on the \'etale site and  this is indicated by the {\it superscript `et'} on $\times_{\rmG}$. 
 { In addition, one needs to modify the model structures as discussed in Construction ~\ref{more.rigidity} to properly address rigidity when the group $\rmG$ is not special.}
 \end{enumerate}
 \vskip .1cm
The commutativity of the top square is clear, while the homotopy commutativity of the triangle below
it results from Proposition ~\ref{diagonal.map}(i), once we make use of the identification in ${\widetilde \SH}^{\rmG}(k_{mot})$:
\[\tau\rmA = \rmD(\mbS^{\rmG} \wedge \rmX/\rmU) \wedge (\mbS^{\rmG} \wedge \rmX/\rmU) \cong \rmD(\mbS^{\rmG} \wedge \rmX^h_{\rmZ}/\rmX^h_{\rmZ}- \rmZ) \wedge (\mbS^{\rmG} \wedge \rmX^h_{\rmZ}/\rmX^h_{\rmZ}- \rmZ).\]

{\it Then the composition of maps in the left column represents the transfer $\tr_{\rmX/\rmU}$.}
\footnote{It may be worth recalling from \cite[(8.2.1)]{CJ23-T1}, that the co-evaluation map is a composition of several maps,
with at least some of them going in the wrong direction, but where such wrong-way maps are all weak-equivalences and also equivariant. Since these 
wrong way maps are all weak-equivalences and equivariant, the construction of the transfer from the pre-transfer proceeds as if these wrong-way maps are not present.}
\vskip .1cm
Let $\rmM$ denote a motivic spectrum, and let $\RHom$ denote the derived functor of the internal $\Hom$ in the category ${{\Spt}}(\k_{\rm mot})$. On applying the functor $\RHom(\quad, \rmM)$ to the diagram ~\eqref{factor.diagram}, we obtain the
 homotopy commutative diagram:
 \vskip .1cm 
\fontsize{9}{12}
\be \begin{equation}
   \label{factor.diagram.1}
   \xymatrix{{\RHom(\{(\rmE{\underset {\rmG} \times}(\mbS^{\rmG}\wedge \rmX_+)^{\rmV}\wedge _{\rmB}S(\eta^{\rmV}\oplus 1))/s|\rmV\}, \rmM)} \ar@<1ex>[r]^{(\tilde i^h \wedge id)^{*}} \ar@<1ex>[d]^{(e \wedge id)^*} & {\RHom(\{(\rmE{\underset {\rmG} \times}(\mbS^{\rmG}\wedge \rmX^h_{Z_+})^{\rmV}\wedge _{\rmB}S(\eta^{\rmV}\oplus 1))/s|\rmV\}, \rmM)}\ar@<1ex>[d]^{(e \wedge id)^*} \\
              {\RHom(\{(\rmE{\underset {\rmG} \times}(\tau \rmA \wedge \mbS^{\rmG}\wedge \rmX_+)^{\rmV}\wedge _{\rmB}S(\eta^{\rmV}\oplus 1))/s|\rmV\}, \rmM)} \ar@<1ex>[r]^{(\tilde i^h \wedge id)^{*}} \ar@<1ex>[d]^{id \wedge \Delta^* \wedge id} & {\RHom(\{(\rmE{\underset {\rmG} \times}(\tau \rmA \wedge \mbS^{\rmG}\wedge \rmX^h_{Z_+})^{\rmV}\wedge _{\rmB}S(\eta^{\rmV}\oplus 1))/s|\rmV\}, \rmM)} \ar@<1ex>[dl]^{id \wedge {\Delta^h \wedge id}^*}\\
              { \RHom(\{(\rmE{\underset {\rmG} \times}(\tau \rmA)^{\rmV}\wedge _{\rmB}S(\eta^{\rmV}\oplus 1))/s|\rmV\}, \rmM)} \ar@<1ex>[d]^{\tau^* \wedge id}\\
              {\RHom(\{(\rmE{\underset {\rmG} \times}\rmA^{\rmV} \wedge _{\rmB}S(\eta^{\rmV}\oplus 1))/s|\rmV\}, \rmM)} \ar@<1ex>[d]^{c^*\wedge id}\\
              {\RHom (\{(\rmE{\underset {\rmG} \times}\mbS^{\rmG, \rmV} \wedge _{\rmB}S(\eta^{\rmV}\oplus 1))/s|\rmV \}, \rmM )}}
\end{equation} \ee
\normalsize
Next assume that the spectrum $\rmM$ has the rigidity property as in Definition ~\ref{rigid.prop}. \index{rigidity}
 In view of the assumption that the spectrum $\rmM$ has the rigidity property, we then obtain the weak-equivalences (see Proposition ~\ref{diagonal.map}(iii)):
\fontsize{9}{12}
 \[{\RHom(\{(\rmE{\underset {\rmG} \times}(\tau \rmA \wedge \mbS^{\rmG}\wedge \rmZ_+)^{\rmV} \wedge _{\rmB}S(\eta^{\rmV}\oplus 1))/s|\rmV\}, \rmM)} \simeq {\RHom(\{(\rmE{\underset {\rmG} \times}(\tau \rmA \wedge \mbS^{\rmG}\wedge \rmX_{\rmZ,+}^h)^{\rmV} \wedge _{\rmB}S(\eta^{\rmV}\oplus 1))/s|\rmV\}, \rmM)} \mbox{ and }\]
\[{\RHom(\{(\rmE{\underset {\rmG} \times}( \mbS^{\rmG}\wedge \rmZ_+)^{\rmV} \wedge _{\rmB}S(\eta^{\rmV}\oplus 1))/s|\rmV\}, \rmM)} \simeq {\RHom(\{(\rmE{\underset {\rmG} \times}( \mbS^{\rmG}\wedge \rmX_{\rmZ,+}^h)^{\rmV} \wedge _{\rmB}S(\eta^{\rmV}\oplus 1))/s|\rmV\}, \rmM).} \]
\normalsize

Therefore, it suffices to show that the composite map
\be \begin{equation}
     \label{id.tr.Z}
  (c \wedge id)^*  \circ  (\tau \wedge id)^* \circ (id \wedge \Delta^h \wedge id)^* \circ  (e \wedge id)^* 
\end{equation} \ee
identifies with the transfer $\tr_{\rmZ}^*\circ  i^{h*}$, where $i^h: \rmZ \ra \rmX^h_{\rmZ}$ is the obvious closed immersion. We proceed to prove this identification.
\vskip .1cm
Next, observe (see Definition ~\ref{pretransfer.3}) that the pre-transfer $\tr_{\rmX^h_{\rmZ}}'$ is given by the composite map:
\be \begin{multline}
   \label{trXh}
   \begin{split}
\tr_{\rmX^h_{\rmZ}}':\mbS^{\rmG} {\overset c \ra} \mbS^{\rmG} \wedge (\rmX^h_{\rmZ}/(\rmX^h_{\rmZ}-\rmZ)) \wedge \rmD(\mbS^{\rmG} \wedge \rmX^h_{\rmZ}/(\rmX^h_{\rmZ}-\rmZ)) {\overset {\tau } \ra} \rmD(\mbS^{\rmG} \wedge \rmX^h_{\rmZ}/(\rmX^h_{\rmZ}-\rmZ)) \wedge  (\mbS^{\rmG} \wedge (\rmX^h_{\rmZ}/\rmX^h_{\rmZ}-\rmZ))\\
{\overset {id \wedge \Delta} \ra} \rmD(\mbS^{\rmG} \wedge \rmX^h_{\rmZ}/(\rmX^h_{\rmZ}-\rmZ)) \wedge  (\mbS^{\rmG} \wedge (\rmX^h_{\rmZ}/(\rmX^h_{\rmZ}-\rmZ))) \wedge (\mbS^{\rmG} \wedge \rmX^h_{\rmZ}) {\overset {e \wedge id} \ra} \mbS^{\rmG} \wedge \rmX^h_{\rmZ},
 \end{split} 
 \end{multline} \ee
\vskip .1cm \noindent
while the pre-transfer $\tr_{\rmZ}'$ is given by the composite map:
\be \begin{equation}
   \label{trZ}
\tr_{\rmZ}': \mbS^{\rmG} {\overset c \ra} (\mbS^{\rmG} \wedge \rmZ_+) \wedge \rmD(\mbS^{\rmG} \wedge \rmZ_+) {\overset {\tau } \ra} \rmD(\mbS^{\rmG} \wedge \rmZ_+) \wedge  (\mbS^{\rmG} \wedge \rmZ_+) {\overset {id \wedge \Delta} \ra} \rmD(\mbS^{\rmG} \wedge \rmZ_+) \wedge  (\mbS^{\rmG} \wedge \rmZ_+) \wedge (\mbS^{\rmG} \wedge \rmZ_+) {\overset {e \wedge id} \ra} \mbS^{\rmG} \wedge \rmZ_+.
\end{equation} \ee
\vskip .2cm
Next, let $\rmU \ra \rmB $ denote an open cover of $\rmB$ in the given topology, (that is either the Zariski or the \'etale topology),
so that $\rmp: \rmE \ra \rmB$ restricted to each $\rmU$ is trivial. Let $\rmU_{\bullet} = cosk_0^{\rmB}(\rmU)$ denote the corresponding $\check{\rm C}$ech-hypercover
of $\rmB$. Then 
\be \begin{equation}
  \label{EP}
(\rmE{\underset {\rmG} \times}\rmP)_{|\rmU_{\bullet}} = \rmU_{\bullet} {\underset {\rmB} \times} (\rmE{\underset {\rmG} \times}\rmP) = (\rmU_{\bullet} {\underset {\rmB} \times} \rmE){\underset {\rmG} \times}\rmP = (\rmU_{\bullet}\times {\rmG}) {\underset {\rmG} \times}\rmP \cong \rmU_{\bullet} \times \rmP,
\end{equation} \ee
for any $\rmG$-equivariant simplicial presheaf $\rmP$. Observing that the map $\rmU_{\bullet} = cosk_0^{\rmB}(\rmU) \ra \rmB$ of
simplicial presheaves is a weak-equivalence
(stalk-wise) and that the map $\rmE{\underset {\rmG} \times}\rmP \ra \rmB$, being locally trivial is a local fibration, one sees that the
induced map $(\rmE{\underset {\rmG} \times}\rmP)_{|\rmU_{\bullet}} \ra \rmE{\underset {\rmG} \times}\rmP$ of simplicial presheaves is also a weak-equivalence stalk-wise.
\vskip .2cm
Next we apply the functor $(\rmE{\underset {\rmG} \times} \quad)_{|\rmU_{\bullet}}$
to both $\tr_{\rmX^h_{\rmZ}}'$ and $\tr_{\rmZ}'$. Making use of the isomorphism in ~\eqref{EP}, we see that each spectrum that shows
up in the definition of $(\rmE{\underset {\rmG} \times} \tr_{\rmX^h_{\rmZ}}')_{|\rmU_{\bullet}}$ and $(\rmE{\underset {\rmG} \times} \tr_{\rmZ}')_{|\rmU_{\bullet}}$
is of the form $\rmU_{\bullet} \times \rmP$, for some suitable spectrum $\rmP$.
\vskip .1cm
Therefore, adopting the terminology used in ~\eqref{factor.diagram}, the remaining part of the proof of (iii) is to show that there are 
maps from each of the spectra appearing in the definition of 
\[\RHom(\{(\rmE{\underset {\rmG} \times} ({\it i^{h}}\circ \tr_{\rmZ}')^{\rmV}\wedge _{\rmB}\rmS(\eta^{\rmV}\oplus 1))/s_{|\rmU_{\bullet}}|\rmV\}, \rmM)\]
as in ~\eqref{trZ} to the corresponding spectrum appearing in the definition of 
\[\RHom(\{(\rmE{\underset {\rmG} \times} \tr_{\rmX^h_{\rmZ}}'^{\rmV}\wedge _{\rmB}\rmS(\eta^{\rmV}\oplus 1))/s_{|\rmU_{\bullet}}|\rmV\}, \rmM)\]
as in ~\eqref{trXh}, which are weak-equivalences and identifies 
$\RHom(\{(\rmE{\underset {\rmG} \times} \tr_{\rmX^h_{\rmZ}}'^{\rmV}\wedge _{\rmB}\rmS(\eta^{\rmV}\oplus 1))/s_{|\rmU_{\bullet}}|\rmV\}, \rmM)$ with 
 $\RHom(\{(\rmE{\underset {\rmG} \times} ({\it i^{h}} \circ\tr_{\rmZ}')^{\rmV}\wedge _{\rmB}\rmS(\eta^{\rmV}\oplus 1))/s_{|\rmU_{\bullet}}|\rmV\}, \rmM)$.
 This is discussed in Proposition ~\ref{compare.transf} below. 
In view of the construction of the transfer, starting with the pre-transfer as in \cite[8.4. Construction of the transfer]{CJ23-T1},  this completes
the proof of Theorem ~\ref{add.transf}(iii).

 \vskip .3cm
The statements in Theorem ~\ref{add.transf}(iv) now follow from the  statements (ii) and (iii) using ascending induction on the number of strata. 
However, as this induction needs to be handled carefully, we proceed to provide an outline of the
relevant argument. We will assume that the stratification of $\rmX$ defines the following increasing
 filtrations:
 \vskip .1cm
(a) $\phi=\rmX_{-1} \subseteq \rmX_0 \subseteq \cdots \subseteq \rmX_n =\rmX$ where each $\rmX_i$ is closed
and the strata $\rmX_i - \rmX_{i-1}$, $i=0, \cdots, n$ are smooth (regular).
\vskip .1cm
(b) $\rmU_0 \subseteq \rmU_1 \subseteq \cdots \subseteq \rmU_{n-1} \subseteq \rmU_n =\rmX$, where
each $\rmU_i$ is open in $\rmX$ (and therefore smooth (regular)) with $\rmU_i - \rmU_{i-1} = \rmX_{n-i} - \rmX_{n-i-1}$, for all
$i=0, \cdots n$. We let $j_k: \rmU_k \ra \rmX$ and $j_k': \rmU_{k} \ra \rmU_{k+1}$ denote the corresponding
 open immersions while  $i_k': \rmX_k - \rmX_{k-1} \ra \rmX - \rmX_{k-1}$
 denotes the corresponding closed immersion and $i_k: \rmX_k - \rmX_{k-1} \ra \rmX$ denotes the corresponding locally closed immersion.
\vskip .1cm
We now apply Theorem ~\ref{add.transf}(ii) with $\rmU = \rmU_{n-1}$, and $\rmZ = \rmU_n - \rmU_{n-1} = \rmX_0 - \rmX_{-1} = \rmX_0$, the closed stratum.
Since $\rmX$ is now smooth(regular) and so is $\rmZ$, the hypotheses of Theorem ~\ref{add.transf}(ii) 
are satisfied. This provides us
\be \begin{align}
  \label{step1.iterative.add}
tr_{\rmX}'^{\rmG} &= j_{n-1}\circ tr_{\rmU_{n-1}}'^{\rmG} + tr_{\rmX/\rmU_{n-1}}'^{\rmG}, \\
tr_{\rmX} = j_{n-1}\circ tr_{\rmU_{n-1}} &+ tr_{\rmX/\rmU_{n-1}},  \mbox{ and } \tau_{\rmX}^{\rmG} = \tau_{\rmU_{n-1}}^{\rmG} + \tau_{\rmX/\rmU_{n-1}}^{\rmG}. \notag
\end{align} \ee
\vskip .1cm
Similarly applying Theorem ~\ref{add.transf}(iii) with $\rmU = \rmU_{n-1}$, and $\rmZ = \rmU_n - \rmU_{n-1} = \rmX_0 - \rmX_{-1} = \rmX_0$, we obtain:
\be \begin{equation}
  \label{step2.iterative.add}
 \tr_{\rmX/\rmU_{n-1}} =  i_{\rm 0}\circ \tr_{\rmX_0}.
\end{equation} \ee
\vskip .1cm 
Next we replace $\rmX$ by $\rmU_{n-1}$, $\rmU$ by $\rmU_{n-2}$ and $\rmZ$ by $\rmX_1- \rmX_0$. 
Since $\rmX_1 - \rmX_0$ is smooth (regular), Theorem ~\ref{add.transf}(ii) and (iii) now provide us
\be \begin{align}
  \label{step3.iterative.add}
tr_{\rmU_{n-1}} = j_{n-2}' \circ tr_{\rmU_{n-2}} + i_1' \circ tr_{\rmX_1 - \rmX_0}.
\end{align} \ee
\vskip .1cm
Substituting these in ~\eqref{step1.iterative.add}, we obtain
\[\quad tr_{\rmX} = j_{n-2} \circ tr_{\rmU_{n-2}} + i_1 \circ tr_{\rmX_1- \rmX_0} +i_0 \circ tr_{\rmX_0}. \]
Clearly this may be continued inductively to deduce statement (iv) in Theorem ~\ref{add.transf}
 from Theorem ~\ref{add.transf}(ii) and (iii). 
 \vskip .1cm
 Finally, the proof of Theorem ~\ref{add.transf}(v) follows from the compatibility of the 
 pre-transfer with \'etale realization
 as shown in  \cite[Proposition 4.1 and Corollary 4.2]{CJ23-T2}. Therefore, it is immediate that one obtains the
 corresponding statements for the $\rmG$-equivariant pre-transfer. 
 The corresponding statements for the transfer in (i) and (ii) then follow readily from these statements for the
 $\rmG$-equivariant pre-transfer. In order to obtain the corresponding statements for the transfer in (iii) and (iv), one
 needs to invoke Proposition ~\ref{diagonal.map}(iv). For groups that are special, one may also see these
 more directly, as $\epsilon ^*(tr(\rmf)) = {\it tr}(\epsilon ^*(\rmf))$: see  \cite[Corollary 4.2]{CJ23-T2}. Therefore, for groups
 that are special, we may also obtain the corresponding results for the \'etale version of the transfer by simply applying the pull-back 
 functor $\epsilon^*$ to the \'etale site.
 \vskip .1cm
 This completes the proof of Theorem ~\ref{add.transf}, modulo the results of Proposition ~\ref{compare.transf} and Lemmas ~\ref{Cech.hypercover}, ~\ref{loc.extend}. 
 \qed
\end{proof}

\begin{lemma}
 \label{Cech.hypercover} \index{Cech hypercover}
 \begin{enumerate}[\rm(i)]
 \item Let $u:\rmU \ra \rmB$ denote an open cover in either the Zariski, Nisnevich, or \'etale topologies, and let $u_{\bullet} : \rmU_{\bullet}= cosk_0^{\rmB}(\rmU) \ra \rmB$
 denote the corresponding  {C}ech-hypercover. Given a simplicial presheaf $P$ on $\rmB$, $\{u_{n\#}u_n^*(P)|n\}$ is a simplicial resolution of
 $P$, in the sense, $\hocolimD \{u_{n\#}u_n^*(P)|n\} \simeq P$.  Therefore, for any simplicial presheaves $F$, and $P$, the natural map
 \[\RHom(P, F) \ra \holimD \{\RHom(\{u_{n\#}u_n^*(P)|n\}, F)|n\} = \holimD \{\RHom(u_{n}^*(P), u_n^*(F))|n\}\]
 is a weak-equivalence, where $\RHom$ denotes
  the derived internal Hom in the category $\Spt(\k_{\rm mot})$ (or $\Spt(\k_{et})$).
 
 \vskip .1cm
 \item Assume $\rmG$ is special and let $\rmM$ denote a motivic spectrum that has the rigidity property. Let $\rmU$, $\rmU_{\bullet}$ be as in (i). 
 Let 
 $(\rmE\times_{\rmG}\rmX^h_{\rmZ})_{|\rmU_{\bullet}}= (\rmE\times_{\rmG}\rmX^h_{\rmZ})\times_{\rmB} \rmU_{\bullet}$, $(\rmE\times_{\rmG}{\rmZ})_{|\rmU_{\bullet}}= (\rmE\times_{\rmG}\rmZ)\times_{\rmB} \rmU_{\bullet}$.

 \vskip .1cm
  Then, the transfer maps $\tr_{\rmX^h_{\rmZ+}}: {\rm \Sigma^{\infty}_{\T}}\rmB_+ \ra {\rm \Sigma^{\infty}_{\T}}(\rmE\times _{\rmG}\rmX^h_{\rmZ})_+$ and $\tr_{\rmZ}:{\rm \Sigma^{\infty}_{\T}}\rmB_+ \ra {\Sigma^{\infty}_{\T}}(\rmE\times_{\rmG} \rmZ)_+ $
  provide the homotopy commutative diagram
 \[\xymatrix{ {\RHom({\Sigma^{\infty}_{\T}}((\rmE\times_{\rmG}\rmX^h_{\rmZ})_{|\rmU_{\bullet}+}, \rmM)} \ar@<1ex>[r]^{\tr_{\rmX^h_{\rmZ}}|\rmU_{\bullet}} \ar@<-1ex>[d]_{\simeq} & {\RHom({\Sigma^{\infty}_{\T}}\rmB_{\rmU_{\bullet}+}, \rmM)} \ar@<-1ex>[d]_{id}\\
              {\RHom({\Sigma^{\infty}_{\T}}((\rmE\times_{\rmG}\rmZ)_{|\rmU_{\bullet}+}, \rmM)}  \ar@<1ex>[r]^{\tr_{\rmZ}|\rmU_{\bullet}} & {\RHom({\Sigma^{\infty}_{\T}}\rmB_{\rmU_{\bullet}+}, \rmM)} \\
              {\RHom({\Sigma^{\infty}_{\T}}((\rmE\times_{\rmG}\rmZ)_+, \rmM)} \ar@<1ex>[u]^{\simeq} \ar@<1ex>[r]^{\tr_{\rmZ}} & {\RHom({\Sigma^{\infty}_{\T}}\rmB_{+}, \rmM)} \ar@<1ex>[u]^{\simeq} \\
              {\RHom({\Sigma^{\infty}_{\T}}((\rmE\times_{\rmG}\rmX^h_{\rmZ})_+, \rmM)}  \ar@/^7pc/[uuu]^{\simeq} \ar@<1ex>[r]^{\tr_{\rmX^h_{\rmZ}}} \ar@<1ex>[u] & {\RHom({\Sigma^{\infty}_{\T}}\rmB_{+}, \rmM)} \ar@<1ex>[u]^{id} }
              \]
 \vskip .1cm \noindent
 with the vertical maps being weak-equivalences.
 \vskip .1cm
 When $\rmG$ is not special, $(\rmE\times_{\rmG}\rmX^h_{\rmZ})_{|\rmU_{\bullet}}$ will denote $\rmR\epsilon_*((\rmE\times_{\rmG}^{et}\epsilon^*(\rmX^h_{\rmZ}))\times_{\rmB}^{et} \rmU_{\bullet})$ and 
 $(\rmE\times_{\rmG}{\rmZ})_{|\rmU_{\bullet}}$ will denote $\rmR\epsilon_*((\rmE\times_{\rmG}^{et}\epsilon^*(\rmZ))\times_{\rmB}^{et} \rmU_{\bullet})$,  
 with $\rmU_{\bullet}$ denoting a Cech hypercover in the \'etale topology. 
 Similarly, $\rmB$ ( $\rmB_{|\rmU_{\bullet}}$) denotes  $\rmR\epsilon_*(\rmB)$ ($\rmR\epsilon_*(\rmB_{|\rmU_{\bullet}})$, \res). Then we obtain a corresponding diagram with
 the suspension by $\mbS_{\k}$ replaced by suspension by $R\epsilon_*(\epsilon^*(\mbS_{\k}))$.
 Moreover the pull-back $\epsilon^*$ and the derived push-forward $\rmR\epsilon_*$
 make use of the model structures discussed in Construction ~\ref{more.rigidity}.
 
 \end{enumerate}
\end{lemma}
\vskip .1cm \noindent
\begin{proof}  The proof of (i) is straightforward. In case the group $\rmG$ is special, (ii) follows readily from (i) and the construction of the transfer from
 the pre-transfer as in \cite[8.4. Construction of the transfer]{CJ23-T1}. In case the group $\rmG$ is not special, the fact that the map denoted by the curly arrow on the left and
 the middle and top vertical maps on the left are weak-equivalences makes use of the model structures discussed in Construction ~\ref{more.rigidity}. Since the diagram 
 homotopy commutes, it follows that the bottom vertical map on the left is also a weak-equivalence.
\end{proof}

\begin{proposition}
 \label{compare.transf}
 Assume the base field $\k$ is infinite and contains a $\sqrt{-1}$. \index{$\sqrt{-1}$}
Let $\rmM$ denote a fibrant spectrum that has the rigidity property as in Definition ~\ref{rigid.prop}. Let $\rmU \ra \rmB$ denote a covering over which 
the torsor $\rmp: \rmE \ra \rmB$ is trivial. Let $\rmU_{\bullet}$ denote the corresponding $\check{\rm C}$ech-hypercover of $\rmB$.
\vskip .1cm
Then, adopting the terminology used in ~\eqref{factor.diagram}, there are maps from each of the spectra appearing in the definition of 
\[\RHom(\{(\rmE{\underset {\rmG} \times} \tr_{\rmX^h_{\rmZ}}'^{\rmV}\wedge _{\rmB}\rmS(\eta^{\rmV}\oplus 1))/s_{|\rmU_{\bullet}}|\rmV\}, \rmM)\]
as in ~\eqref{trXh} to the corresponding spectrum appearing in the definition of 
\[\RHom(\{(\rmE{\underset {\rmG} \times} ({\it i^{h}} \circ \tr_{\rmZ}')^{\rmV}\wedge _{\rmB}\rmS(\eta^{\rmV}\oplus 1))/s_{|\rmU_{\bullet}}|\rmV\}, \rmM)\]
as in ~\eqref{trZ}, which are weak-equivalences, and identifies 
\[\RHom(\{(\rmE{\underset {\rmG} \times} \tr_{\rmX^h_{\rmZ}}'^{\rmV}\wedge _{\rmB}\rmS(\eta^{\rmV}\oplus 1))/s_{|\rmU_{\bullet}}|\rmV\}, \rmM) \mbox{ with } \RHom(\{(\rmE{\underset {\rmG} \times} ({\it i^{h}} \circ \tr_{\rmZ}')^{\rmV}\wedge _{\rmB}\rmS(\eta^{\rmV}\oplus 1))/s_{|\rmU_{\bullet}}|\rmV\}, \rmM).\]
\end{proposition}
\begin{proof} We will first discuss the proof when $\rmU \ra \rmB$ is a Zariski open cover, and then outline the changes needed if this is an
\'etale cover. For this, we begin with the following observations:
\begin{enumerate}[\rm(i)]
\item The Spanier-Whitehead dual $\rmD$ above is taken in the model category
${\widetilde {\Spt}}^{\group}(\k_{\rm mot})$ which is Quillen equivalent to the model category $\Spt$ of non-equivariant spectra. In fact,
the spectrum $\mbS^{\rmG}$ in the category  ${\widetilde {\Spt}}^{\group}$ identifies with the non-equivariant sphere spectrum ${\Sigma^{\infty}_{\T}}$, but re-indexed
by the Thom-spaces $\{\rmT_{\rmV}|\rmV\}$ of finite dimensional $\rmG$ representations of $\rmG$. (See \cite[Proposition 6.2]{CJ23-T1}.)
\item It suffices to obtain the above identification as maps of {\it sheaves} on the appropriate site of the base scheme $\rmB$ on which the
torsor $\rmp: \rmE \ra \rmB$ is locally trivial. 
\end{enumerate}
Therefore, and in view of Lemma ~\ref{Cech.hypercover} and ~\eqref{EP}, we apply the functor $(\rmE{\underset {\rmG} \times} \quad)_{|\rmU_{\bullet}}$
to both $\tr_{\rmX^h_{\rmZ}}'$ and $\tr_{\rmZ}'$. Making use of the isomorphism in ~\eqref{EP}, we see that each spectrum that shows
up in the definition of $(\rmE{\underset {\rmG} \times} \tr_{\rmX^h_{\rmZ}}')_{|\rmU_{\bullet}}$ and $(\rmE{\underset {\rmG} \times} \tr_{\rmZ}')_{|\rmU_{\bullet}}$
is of the form $\rmU_{\bullet} \times \rmP$, for some suitable spectrum $\rmP$. 
In particular, we may  
replace the equivariant sphere spectrum $\mbS^{\rmG}$ appearing in $(\rmE{\underset {\rmG} \times} \tr_{\rmZ}')_{|\rmU_{\bullet}}$ and
$(\rmE{\underset {\rmG} \times} \tr_{\rmX^h_{\rmZ}}')_{|\rmU_{\bullet}}$ by the non-equivariant sphere spectrum $\mbS_{k}$. 
{\it Moreover, now it suffices to prove
that one can identify $\tr_{\rmX^h_{\rmZ}}'$ and $i^h \circ\tr_{\rmZ}'$, where $i^h: \rmZ \ra \rmX^h_{\rmZ}$ denotes the
obvious closed immersion.}
\vskip .2cm
Let $\cN$ denote the normal bundle associated to the closed immersion $\rmZ \ra \rmX$.  
Next, one may take a Zariski open cover $\rmW \ra\rmZ$ so that the normal bundle $\cN_{|\rmW}$ is trivial. 
 Then one observes the isomorphism
 \be \begin{equation}
 \label{loc.XhZ}
 (\rmX^h_{\rmZ}/(\rmX^h_{\rmZ}-\rmZ))_{|\rmW_{\bullet}} \cong \rmW_{\bullet,+} \wedge \T^{\rmc},
 \end{equation} \ee
 where $\rmc$ denotes the codimension of $\rmZ$ in $\rmX$ and $\rmW_{\bullet} = cosk_0^{\rmZ}(\rmW)$ is the corresponding $\check{\rm C}$ech hypercover. Then the compatibility of the co-evaluation map with products
 (see the proof of Proposition ~\ref{mult.prop})
 shows the following diagram commutes, where the vertical maps are induced by the co-evaluation map
 $c: \mbS_{k} \ra \T^c \wedge \rmD(\T^{\rmc})$:
 \fontsize{9}{12}
 \be \begin{equation}
 \label{big.diagm.0}
  \xymatrix{{\mbS_{\k}} \ar@<1ex>[r]^(.2){c}  &  {{\Sigma^{\infty}_{\T}}  (\rmX^h_{\rmZ}/(\rmX^h_{\rmZ}-\rmZ))_{|\rmW_{\bullet}} \wedge \rmD({\Sigma^{\infty}_{\T}}  \rmX^h_{\rmZ}/(\rmX^h_{\rmZ}-\rmZ)_{|\rmW_{\bullet}})} \ar@<1ex>[r]^{\tau} & {\rmD({\Sigma^{\infty}_{\T}}  \rmX^h_{\rmZ}/(\rmX^h_{\rmZ}-\rmZ)_{|\rmW_{\bullet}}) \wedge  ({\Sigma^{\infty}_{\T}}  (\rmX^h_{\rmZ}/\rmX^h_{\rmZ}-\rmZ)_{|\rmW_{\bullet}})}\\
             {{\mbS_{\k}}} \ar@<1ex>[u]^{id} \ar@<1ex>[r]^c  & {{\Sigma^{\infty}_{\T}}  \rmW_{\bullet,+} \wedge \rmD({\Sigma^{\infty}_{\T}} \rmW_{\bullet,+})} \ar@<1ex>[u] \ar@<1ex>[r]^{\tau} & {\rmD({\Sigma^{\infty}_{\T}}  {\rmW_{\bullet,+}}) \wedge  ({\Sigma^{\infty}_{\T}}  {\rmW_{\bullet,+}})} \ar@<1ex>[u] .}
  \end{equation} \ee
  \normalsize
Varying $\W_n$ in the above hypercover, the above diagram is one of cosimplicial-simplicial objects of spectra. One takes the
homotopy colimit followed by the homotopy inverse limit of the above diagram to obtain the commutative diagram (as in the proof of Lemma ~\ref{loc.extend} below):
\be \begin{equation}
\label{big.diagm.1}
  \xymatrix{{\mbS_{\k}} \ar@<1ex>[r]^(.2){c}  &  {{\Sigma^{\infty}_{\T}}  (\rmX^h_{\rmZ}/(\rmX^h_{\rmZ}-\rmZ)) \wedge \rmD({\Sigma^{\infty}_{\T}}  \rmX^h_{\rmZ}/(\rmX^h_{\rmZ}-\rmZ))} \ar@<1ex>[r]^{\tau} & {\rmD({\Sigma^{\infty}_{\T}}  \rmX^h_{\rmZ}/(\rmX^h_{\rmZ}-\rmZ)) \wedge  ({\Sigma^{\infty}_{\T}}  (\rmX^h_{\rmZ}/\rmX^h_{\rmZ}-\rmZ))}\\
             {\mbS_{\k}} \ar@<1ex>[u]^{id} \ar@<1ex>[r]^c    & {{\Sigma^{\infty}_{\T}}  \rmZ_+ \wedge \rmD({\Sigma^{\infty}_{\T}}  \rmZ_+)} \ar@<1ex>[u] \ar@<1ex>[r]^{\tau} & {\rmD({\Sigma^{\infty}_{\T}}  \rmZ_+) \wedge  ({\Sigma^{\infty}_{\T}}  \rmZ_+)} \ar@<1ex>[u] .}
  \end{equation} \ee
\vskip .2cm
 Next one observes the identifications:
\be \begin{align}
\label{can.ident}
{({\Sigma^{\infty}_{\T}}  (\rmX^h_{\rmZ}/\rmX^h_{\rmZ}-\rmZ)) \wedge \rmD({\Sigma^{\infty}_{\T}}  \rmX^h_{\rmZ}/(\rmX^h_{\rmZ}-\rmZ))  } &\simeq \RHom ({\Sigma^{\infty}_{\T}}  \rmX^h_{\rmZ}/(\rmX^h_{\rmZ}-\rmZ), {\Sigma^{\infty}_{\T}}  \rmX^h_{\rmZ}/(\rmX^h_{\rmZ}-\rmZ)) \mbox{ and}\\
{\Sigma^{\infty}_{\T}} \rmZ_+ \wedge \rmD({\Sigma^{\infty}_{\T}}  \rmZ_+) &\simeq \RHom({\Sigma^{\infty}_{\T}} \rmZ_+, {\Sigma^{\infty}_{\T}}  \rmZ_+,) \notag
\end{align} \ee
where $\RHom$ denotes the derived internal hom in the above category of spectra.  
 Then Lemma ~\ref{loc.extend} below, enables
 us to define a map
\[ \RHom({\Sigma^{\infty}_{\T}}\rmZ_+, {\Sigma^{\infty}_{\T}}\rmZ_+) \ra \RHom({\Sigma^{\infty}_{\T}}\rmX^h_{\rmZ}/(\rmX^h_{\rmZ}-\rmZ), {\Sigma^{\infty}_{\T}}\rmX^h_{\rmZ}/(\rmX^h_{\rmZ}-\rmZ))\]
which is shown there to be a weak-equivalence. This map is also defined by restricting to the hypercover $\rmW_{\bullet}$ and
we skip the verification that it is compatible with the middle vertical map in ~\eqref{big.diagm.1} under the identifications
in ~\eqref{can.ident}. This also proves therefore that the middle vertical map in ~\eqref{big.diagm.1} is a weak-equivalence.
One may prove the right vertical map in ~\eqref{big.diagm.1} is also a weak-equivalence since it is obtained
by applying the permutation of the two factors in the middle column.
\vskip .1cm
In view of the commutative square (where the vertical maps are again induced by the map $i^h$),
\[\xymatrix{{\rmX^h_{\rmZ}/(\rmX^h_{\rmZ}-\rmZ)} \ar@<1ex>[r]^{\Delta} & {\rmX^h_{\rmZ}/(\rmX^h_{\rmZ}-\rmZ) \wedge \rmX^h_{\rmZ,+}}\\
            {\rmZ_+} \ar@<1ex>[u] \ar@<1ex>[r]^{\Delta} & {\rmZ_+ \wedge \rmZ_+} \ar@<1ex>[u] }
\]
we next obtain the commutative diagram:
\be \begin{equation}
   \label{big.diagm.2}
   \xymatrix{{\rmD({\Sigma^{\infty}_{\T}}  \rmX^h_{\rmZ}/(\rmX^h_{\rmZ}-\rmZ) \wedge  {\Sigma^{\infty}_{\T}}  \rmX^h_{\rmZ}/(\rmX^h_{\rmZ}-\rmZ)} \ar@<1ex>[r]^{id \wedge \Delta} & {\rmD({\Sigma^{\infty}_{\T}}  \rmX^h_{\rmZ}/(\rmX^h_{\rmZ}-\rmZ) \wedge  {\Sigma^{\infty}_{\T}}  \rmX^h_{\rmZ}/(\rmX^h_{\rmZ}-\rmZ) \wedge {\Sigma^{\infty}_{\T}}  \rmX^h_{\rmZ,+}}\\
             {\rmD({\Sigma^{\infty}_{\T}}Z_+) \wedge {\Sigma^{\infty}_{\T}}Z_+} \ar@<1ex>[u] \ar@<1ex>[r]^{id \wedge \Delta} &  {\rmD({\Sigma^{\infty}_{\T}}Z_+) \wedge {\Sigma^{\infty}_{\T}}Z \wedge {\Sigma^{\infty}_{\T}}Z_+} \ar@<1ex>[u] .}
    \end{equation} \ee
(In fact, one may start with the commutative diagram:
\fontsize{9}{12}
\be \begin{equation}
   \label{big.diagm.2.1}
   \xymatrix{{\rmD({\Sigma^{\infty}_{\T}}  (\rmX^h_{\rmZ}/(\rmX^h_{\rmZ}-\rmZ))_{|\rmW_{\bullet}}) \wedge  {\Sigma^{\infty}_{\T}}  (\rmX^h_{\rmZ}/(\rmX^h_{\rmZ}-\rmZ))_{|\rmW_{\bullet}}} \ar@<1ex>[r]^{id \wedge \Delta} & {\rmD({\Sigma^{\infty}_{\T}}  (\rmX^h_{\rmZ}/(\rmX^h_{\rmZ}-\rmZ))_{|\rmW_{\bullet}} \wedge  {\Sigma^{\infty}_{\T}}  (\rmX^h_{\rmZ}/(\rmX^h_{\rmZ}-\rmZ))_{|\rmW_{\bullet}} \wedge {\Sigma^{\infty}_{\T}}  (\rmX^h_{\rmZ,+})_{|\rmW_{\bullet}}} \notag\\
             {\rmD({\Sigma^{\infty}_{\T}}{\rmW_{\bullet}}_+) \wedge {\Sigma^{\infty}_{\T}}{\rmW_{\bullet}}_+} \ar@<1ex>[u] \ar@<1ex>[r]^{id \wedge \Delta} &  {\rmD({\Sigma^{\infty}_{\T}}{\rmW_{\bullet}}_+) \wedge {\Sigma^{\infty}_{\T}}{\rmW_{\bullet}} \wedge {\Sigma^{\infty}_{\T}}{\rmW_{\bullet}}_+} \ar@<1ex>[u] .}
    \end{equation} \ee
\normalsize
and take the homotopy colimit followed by the homotopy limit to obtain the commutative diagram in ~\eqref{big.diagm.2}.)
\vskip .2cm    
We next consider the square:
\be \begin{equation}
  \label{big.diagm.3}
  \xymatrix{{\rmD({\Sigma^{\infty}_{\T}}  \rmX^h_{\rmZ}/(\rmX^h_{\rmZ}-\rmZ)) \wedge  {\Sigma^{\infty}_{\T}}  \rmX^h_{\rmZ}/(\rmX^h_{\rmZ}-\rmZ) \wedge {\Sigma^{\infty}_{\T}}  \rmX^h_{\rmZ,+}} \ar@<1ex>[r]^(.7){e \wedge id} & {{\mbS_k }\wedge {\Sigma^{\infty}_{\T}}\rmX^h_{\rmZ,+}}\\
             {\rmD({\Sigma^{\infty}_{\T}}Z_+) \wedge {\Sigma^{\infty}_{\T}}Z_+ \wedge {\Sigma^{\infty}_{\T}}Z_+} \ar@<1ex>[r]^{e \wedge id} \ar@<1ex>[u] & {{\mbS_k} \wedge {\Sigma^{\infty}_{\T}}\rmZ_+} \ar@<1ex>[u]^{id \wedge {\Sigma^{\infty}_{\T}}i^h} .}
\end{equation} \ee
Clearly the commutativity of the above square will be implied by the commutativity of the square
\be \begin{equation}
  \label{big.diagm.3.1}
  \xymatrix{{\rmD({\Sigma^{\infty}_{\T}}  \rmX^h_{\rmZ}/(\rmX^h_{\rmZ}-\rmZ)) \wedge  {\Sigma^{\infty}_{\T}}  \rmX^h_{\rmZ}/(\rmX^h_{\rmZ}-\rmZ) } \ar@<1ex>[r]^(.8){e } & {{\mbS_k }}\\
             {\rmD({\Sigma^{\infty}_{\T}}Z_+) \wedge {\Sigma^{\infty}_{\T}}Z_+ } \ar@<1ex>[r]^{e } \ar@<1ex>[u] & {{\mbS_k} } \ar@<1ex>[u]^{id} ,}
\end{equation} \ee
which is implied by the commutativity of the square:
\be \begin{equation}
  \label{big.diagm.3.2}
  \xymatrix{{\rmD({\Sigma^{\infty}_{\T}}  \rmX^h_{\rmZ}/(\rmX^h_{\rmZ}-\rmZ))_{|\rmW_{\bullet}} \wedge  {\Sigma^{\infty}_{\T}}  (\rmX^h_{\rmZ}/(\rmX^h_{\rmZ}-\rmZ))_{|\rmW_{\bullet}} } \ar@<1ex>[r]^(.8){e } & {{\mbS_k}}\\
             {\rmD({\Sigma^{\infty}_{\T}}\rmW_{\bullet,_+}) \wedge {\Sigma^{\infty}_{\T}}\rmW_{\bullet,+} } \ar@<1ex>[r]^{e } \ar@<1ex>[u] & {{\mbS_k} } \ar@<1ex>[u]^{id} .}
\end{equation} \ee
In view of the identification ${\Sigma^{\infty}_{\T}}  (\rmX^h_{\rmZ}/(\rmX^h_{\rmZ}-\rmZ))_{|\rmW_{\bullet}} ={\Sigma^{\infty}_{\T}}\rmW_{\bullet, +} \wedge \T^c$,
the last diagram now identifies with
\be \begin{equation}
  \label{big.diagm.3.3}
  \xymatrix{{(\rmD({\Sigma^{\infty}_{\T}} \rmW_{\bullet,_+}) \wedge {\Sigma^{\infty}_{\T}}\rmW_{\bullet,+}) \wedge (\rmD({\Sigma^{\infty}_{\T}}\T^c) \wedge {\Sigma^{\infty}_{\T}}\T^c)} \ar@<1ex>[r]^(.8){e_{\rmW_{\bullet,+}}\wedge e_{\T^c} } & {{\mbS_k}}\\
             {(\rmD({\Sigma^{\infty}_{\T}}\rmW_{\bullet,_+}) \wedge {\Sigma^{\infty}_{\T}}\rmW_{\bullet,+}) \wedge {\mbS_k} } \ar@<1ex>[r]^{e_{\rmW_{\bullet,+} }} \ar@<1ex>[u]^{id \wedge (\tau\circ c_{\T^c})} & {{\mbS_k} } \ar@<1ex>[u]^{id} ,}
\end{equation} \ee
where $c_{\T^{\rmc}}$ ($e_{\T^{\rmc}}$) is the co-evaluation (evaluation, \res) map for $\T^{\rmc}$. The above square commutes precisely when $\tau_{\T}=1$ in $\pi_{0,0}(\mbS_k)\cong {\rm GW}(k)$, with the last isomorphism holding because we have assumed $k$ is perfect: see
\cite[Theorem 6.2.2]{Mo4}.  
\vskip .2cm
To see this, first observe that 
the multiplicative property of the trace as in Lemma ~\ref{mult.prop} shows that 
\be \begin{equation}
  \label{mult.prop.1}
  \tau_{\T^{\rmc} \wedge \rmZ_+}  = (\tau_{{\Sigma^{\infty}_{\T}}\T})^{\wedge ^{\rmc}} \wedge \tau_{{\Sigma^{\infty}_{\T}} \rmZ_+} = \tau_{{\Sigma^{\infty}_{\T}} \rmZ_+}
\end{equation} \ee
as classes in $\pi_{0,0}(\mbS_k)$, where the last identification makes use of the assumption that $\sqrt{-1} \in k$.
\vskip .1cm
In general, it is known that the class of $\tau_{{\Sigma^{\infty}_{\T}}\T}= <-1>$ in the Grothendieck-Witt group ${\rm GW}(k)$, which identifies with $\pi_{0,0}(\mbS_k)$, in view of
\cite[Theorem 6.2.2]{Mo4}. (Here it may be important to recall that $\T$ is the pointed simplicial presheaf ${\mathbb P}^1$ pointed by $\infty$.)
The assumption that $\sqrt{-1} \in k$ implies that the  quadratic form $<1, -1>$ identifies with the  quadratic form
$<1,1>$,  and the  quadratic form $<-1>$ identifies with the  quadratic form $<1>$ as classes in ${\rm GW}(k)$. (See, for example, \cite[p. 44]{Sz}.)
This implies that  $\tau_{{\Sigma^{\infty}_{\T}}\T} = \tau_{\mbS_k} =1$ in $\pi_{0,0}(\mbS_k)$. This proves ~\eqref{mult.prop.1}. \index{$\sqrt -1$}
\vskip .2cm
Therefore, the last square commutes under the assumption that the base field $\k$ contains a 
$\sqrt{-1}$. Therefore, under the same hypothesis, the
square in ~\eqref{big.diagm.3} also commutes. Observe that the composition of the maps in the top rows of the squares
~\eqref{big.diagm.1}, ~\eqref{big.diagm.2} and ~\eqref{big.diagm.3} define the pre-transfer $\tr_{\rmX^h_{\rmZ}}'$, while the
 composition of the maps in the bottom rows of the squares ~\eqref{big.diagm.1}, ~\eqref{big.diagm.2} and ~\eqref{big.diagm.3} 
 define the pre-transfer $\tr_{\rmZ}'$. Therefore, $\tr_{\rmX^h_{\rmZ}}' = i^h \circ \tr_{\rmZ}'$.
 \vskip .2cm

In view of the identification in ~\eqref{EP}, 
the product with $\rmU_{\bullet}$ of the composition of the maps forming the top rows in diagrams ~\eqref{big.diagm.1}, ~\eqref{big.diagm.2} and  ~\eqref{big.diagm.3}
defines the map $(\rmE{\underset {\rmG} \times} \tr_{\rmX^h_{\rmZ}}')_{|\rmU_{\bullet}}$ while the product with $\rmU_{\bullet}$ of the composition of the maps forming
the bottom-rows in diagrams ~\eqref{big.diagm.1}, ~\eqref{big.diagm.2} and ~\eqref{big.diagm.3}
defines the map $(\rmE{\underset {\rmG} \times} \tr_{\rmZ}')_{|\rmU_{\bullet}}$. 
Moreover, at this point, on applying the functor $\RHom(\quad, \rmM)$ 
for a  spectrum $\rmM$ in $\Spt(\k_{\rm mot})$ that
has the rigidity property to the diagrams obtained by taking the product of the above diagrams with $\rmU_{\bullet}$,
the resulting vertical maps are all  weak-equivalences. 
Therefore, this
completes the proof of the Proposition, when the given cover $\rmU$ over which $\rmp: \rmE \ra \rmB$ is trivial is a Zariski open cover.
\vskip .2cm
Next we consider the case where the above cover $\rmU$ is an \'etale cover of $\rmB$. In this case, recall that 
$\epsilon: (\Speck)_{et} \ra (\Speck)_{Nis}$ denotes the morphism of sites. (At this point one may make use of the model structures
in Construction ~\ref{more.rigidity}.)
  Now ${\rm L}\epsilon^*$ applied to the composition of maps forming the top rows in diagrams ~\eqref{big.diagm.1}, ~\eqref{big.diagm.2} and  ~\eqref{big.diagm.3}
defines the pre-transfer ${\rm L}\epsilon^*(\tr_{\rmX^h_{\rmZ}}')$ while ${\rm L}\epsilon^*$ applied to the composition of the maps forming the bottom rows in
 diagrams  ~\eqref{big.diagm.1}, ~\eqref{big.diagm.2} and  ~\eqref{big.diagm.3} defines the pre-transfer ${\rm L}\epsilon^*(\tr_{\rmZ}')$. 
 Therefore, ${\rm L}\epsilon^*(\tr_{\rmX^h_{\rmZ}}') = {\rm L}\epsilon ^*(i^h) \circ {\rm L}\epsilon ^*(\tr_{\rmZ}')$ and hence 
 the same holds on applying the Borel-construction, i.e.,
 \[(\rmE\times_{\rmG}{\rm L}\epsilon^*(\tr_{\rmX^h_{\rmZ}}'))_{\rmU_{\bullet}} = (\rmE\times_{\rmG}{\rm L}\epsilon ^*(i^h) \circ {\rm L}\epsilon ^*(\tr_{\rmZ}'))_{\rmU_{\bullet}}.\]
 Now it is clear one again obtains equality on applying ${\rm R}\epsilon_*$ to both sides. Therefore, essentially the same arguments as in the case where $\rmU$ is a Zariski open cover of $\rmB$ completes the proof.
\end{proof}

\begin{lemma}
 \label{loc.extend}
 Let $i:\rmZ \ra \rmX$ denote a closed regular immersion of smooth schemes over $\k$  and with the normal bundle associated $i$ being $\cN$. Then there exists a natural map
 \be \begin{equation}
 \label{map.on.Thom.sps}
 \RHom({\Sigma^{\infty}_{\T}}\rmZ_+, {\Sigma^{\infty}_{\T}}\rmZ_+) \ra \RHom( {\Sigma^{\infty}_{\T}}\Th(\cN), {\Sigma^{\infty}_{\T}}\Th(\cN)) 
 \end{equation} \ee
 which is a weak-equivalence. Here $\RHom$ denotes the derived internal hom in $\Spt(\k_{\rm mot})$ or $\Spt(\k_{et})$.
\end{lemma}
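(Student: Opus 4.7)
\textbf{Proof plan for Lemma~\ref{loc.extend}.} The plan is to construct the map using a Thom diagonal that makes $\Sigma_{\T}\Th(\cN)$ into a module over $\Sigma_{\T}\rmZ_+$, and then verify it is a weak-equivalence by Zariski-localizing on $\rmZ$ to the case where $\cN$ is trivial, followed by a $\check{\mathrm{C}}$ech descent argument.

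First I would define the map. The zero section $s:\rmZ\hookrightarrow \cN$ together with the bundle projection $\pi:\cN\to \rmZ$ furnish a Thom diagonal $\Delta_{\cN}:\Sigma_{\T}\Th(\cN)\to \Sigma_{\T}\Th(\cN)\wedge \Sigma_{\T}\rmZ_+$ (constructed as in~\eqref{comod.XU} applied to the pair $(\cN,\cN-\rmZ)$), together with an action map $\mu:\Sigma_{\T}\Th(\cN)\wedge \Sigma_{\T}\rmZ_+\to \Sigma_{\T}\Th(\cN)$ exhibiting $\Sigma_{\T}\Th(\cN)$ as a module over $\Sigma_{\T}\rmZ_+$. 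Given $\phi\in \RHom(\Sigma_{\T}\rmZ_+,\Sigma_{\T}\rmZ_+)$, one associates the composite $\mu\circ (id_{\Sigma_{\T}\Th(\cN)}\wedge \phi)\circ \Delta_{\cN}$; this assignment is manifestly natural in $(\rmZ,\cN)$.

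Next I would reduce to the trivial case. Choose a Zariski open cover $\V=\{V_j\}_j$ of $\rmZ$ over which $\cN$ trivializes, so that $\Th(\cN_{|V_j})\simeq V_{j,+}\wedge \T^c$. Under this identification, the restriction to $V_j$ of the map of the lemma becomes the canonical map
\[\RHom(\Sigma_{\T}V_{j,+},\Sigma_{\T}V_{j,+})\ra \RHom(\Sigma_{\T}V_{j,+}\wedge \T^c,\Sigma_{\T}V_{j,+}\wedge \T^c)\]
sending $\phi\mapsto \phi\wedge id_{\T^c}$. Since $\T$ is $\otimes$-invertible in $\Spt_{mot}$ and in $\Spt_{et}$, smashing with $\T^c$ is an auto-equivalence of the stable homotopy category, hence induces a weak-equivalence on $\RHom$-spectra. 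Thus the map is a level-wise weak-equivalence after restriction to the trivializing cover. To globalize, let $\V_{\bullet}=\mathrm{cosk}_0^{\rmZ}(\V)$ be the associated $\check{\mathrm{C}}$ech hypercover. By motivic purity \cite[Theorem 2.23]{MV}, $\Sigma_{\T}\Th(\cN)\simeq \Sigma_{\T}\rmX/(\rmX-\rmZ)$, and both $\Sigma_{\T}\rmZ_+$ and $\Sigma_{\T}\Th(\cN)$ are dualizable in $\Spt_{mot}$ (and in $\Spt_{et}$), so
\[\RHom(\Sigma_{\T}\rmZ_+,\Sigma_{\T}\rmZ_+)\simeq \Sigma_{\T}\rmZ_+\wedge \rmD(\Sigma_{\T}\rmZ_+),\]
and likewise for the Thom-space version. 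Both suspension spectra and their Spanier-Whitehead duals satisfy Zariski descent along $\V_{\bullet}$, so the global map is the $\holim$ over $\Delta$ of the level-wise maps already shown to be weak-equivalences, and hence is itself a weak-equivalence.

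The main obstacle is the final descent step, since $\RHom(A,A)$ is not functorial in $A$ in a way that immediately commutes with homotopy (co)limits. This is precisely why one passes to dualizable objects, rewriting $\RHom(A,A)\simeq A\wedge \rmD(A)$, and invokes the compatibility of Spanier-Whitehead duality with the homotopy colimits coming from the $\check{\mathrm{C}}$ech hypercover together with motivic purity to identify $\Sigma_{\T}\Th(\cN)$ with a presheaf-theoretically local object. Compatibility between the local trivializations of $\cN$ and the globally-defined natural map then reduces to the naturality of the Thom diagonal $\Delta_{\cN}$ with respect to restriction to open subsets, so no separate cocycle-matching argument is required.
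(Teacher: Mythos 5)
Your plan identifies the correct global outline (construct a natural map, localize to trivial $\cN$, invoke $\otimes$-invertibility of $\T$, glue via a \v{C}ech hypercover), but the very first step --- the construction of the map itself --- contains a gap that undermines the rest.

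You assert that $\Sigma_{\T}\rmZ_+$ acts on $\Sigma_{\T}\Th(\cN)$ via an ``action map'' $\mu:\Sigma_{\T}\Th(\cN)\wedge \Sigma_{\T}\rmZ_+\to \Sigma_{\T}\Th(\cN)$ exhibiting $\Sigma_{\T}\Th(\cN)$ as a module over $\Sigma_{\T}\rmZ_+$. However $\Sigma_{\T}\rmZ_+$ is a coalgebra (via the diagonal), not an algebra, and the Thom diagonal $\Delta_{\cN}$ only endows $\Sigma_{\T}\Th(\cN)$ with a \emph{comodule} structure, not a module structure. The only natural candidate for $\mu$ is $\mathrm{id}\wedge\varepsilon$ with $\varepsilon:\Sigma_{\T}\rmZ_+\to\mbS$ the augmentation; but then the composite $\mu\circ(\mathrm{id}\wedge\phi)\circ\Delta_{\cN}=(\mathrm{id}\wedge(\varepsilon\circ\phi))\circ\Delta_{\cN}$ depends on $\phi$ only through $\varepsilon\circ\phi:\Sigma_{\T}\rmZ_+\to\mbS$, so it collapses most of $\RHom(\Sigma_{\T}\rmZ_+,\Sigma_{\T}\rmZ_+)$ and cannot be a weak-equivalence. (Passing instead to the natural $\rmD(\Sigma_{\T}\rmZ_+)$-module structure dual to the comodule structure does not help either: a general $\phi$ is not $\rmD(\Sigma_{\T}\rmZ_+)$-linear, so $\phi\wedge_{\rmD(\Sigma_{\T}\rmZ_+)}\mathrm{id}$ is undefined.) So there is no manifest global natural map, and its construction is precisely the content of the lemma, not something one gets for free before localizing.

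The paper avoids this issue by never writing a direct global formula. It first trades internal $\RHom$ for external $Hom(K\wedge -, -)$ over a varying test object $K$, then fixes a Zariski cover $\U=\{\rmU_i\}$ of $\rmZ$ trivializing $\cN$ with $\U_\bullet = cosk_0^{\rmZ}(\U)$, uses the trivialization $\phi:\U_\bullet\times\T^c\cong\rmS(\cN\oplus 1)_{|\U_\bullet}$ to define a map of cosimplicial simplicial sets $Hom(K\wedge\Sigma_{\T}\U_{\bullet,+},\Sigma_{\T}\U_{\bullet,+})\to Hom(K\wedge\Sigma_{\T}\Th(\cN_{|\U_\bullet}),\Sigma_{\T}\Th(\cN_{|\U_\bullet}))$ by $f\mapsto\Sigma_{\T}\phi_+(f\wedge\mathrm{id}_{\T^c_+})$ followed by collapsing the canonical section $s:\U_\bullet\to\rmS(\cN\oplus 1)_{|\U_\bullet}$. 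The compatibility with the cosimplicial structure is exactly the gluing data that your proposal leaves implicit, and it is only at this point that the map exists. Your final descent step also glosses over a direction mismatch: along $\U_\bullet$ the suspension spectrum appears as a homotopy colimit (left argument of $\RHom$) while its dual appears as a homotopy limit (right argument), so the paper uses the double (co)limit $\holimD\hocolimD\RHom(\Sigma_{\T}\U_{\bullet,+},\Sigma_{\T}\U_{\bullet,+})$ rather than a single $\holim$. Once the map has been built on the hypercover, your reduction to trivial $\cN$ and $\T$-invertibility is the right way to see that the level-wise comparisons are weak-equivalences, so the back end of your argument is sound.
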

\begin{proof}
First observe that $\rmZ = \sqcup_i \rmZ_i$, where $\rmZ_i$ is a connected component of $\rmZ$. Therefore, we may assume without loss of generality  that 
$\rmZ$ has pure codimension $c$ in $\rmX$. In view of the adjunction between the internal hom, $\Hom$ and the smash product, $\wedge$, we will first show that there is a natural map
 \be \begin{equation}
   \label{map.Thom.sps.1}
{\rm Hom}(K \wedge {\Sigma^{\infty}_{\T}}\rmZ_+, {\Sigma^{\infty}_{\T}}\rmZ_+) \ra Hom(K \wedge {\Sigma^{\infty}_{\T}}\Th(\cN), {\Sigma^{\infty}_{\T}}\Th(\cN)),
    \end{equation} \ee
\vskip .1cm \noindent
 where ${\rm Hom}$ denotes the
 external hom in the category $\Spt(\k_{\rm mot})$ and for every $\rmK \eps \Spt(\k_{\rm mot})$.
The adjunction between $\wedge$ and the internal $\hom$, will then show this induces a natural map 
 \[{\rm Hom}(K, \Hom({\Sigma^{\infty}_{\T}}\rmZ_+, {\Sigma^{\infty}_{\T}}\rmZ_+)) \ra Hom(K, \Hom ({\Sigma^{\infty}_{\T}}\Th(\cN), {\Sigma^{\infty}_{\T}}\Th(\cN))\]
 for all $K \eps  {\Spt}(\k_{\rm mot})$, and therefore a natural map 
 \[\Hom({\Sigma^{\infty}_{\T}}\rmZ_+, {\Sigma^{\infty}_{\T}}\rmZ_+)\ra \Hom ({\Sigma^{\infty}_{\T}}\Th(\cN), {\Sigma^{\infty}_{\T}}\Th(\cN)).\]
 By making use of the the injective model structures on ${\Spt}(\k_{\rm mot})$ (as in \cite[5.2]{CJ23-T1}), we may assume that every object is
 cofibrant, and therefore, the above map will then induce a natural map (by taking fibrant replacements):
 \be \begin{equation}
 \label{internal.hom.weak.eq}
 \RHom({\Sigma^{\infty}_{\T}}\rmZ_+, {\Sigma^{\infty}_{\T}}\rmZ_+)\ra \RHom ({\Sigma^{\infty}_{\T}}\Th(\cN), {\Sigma^{\infty}_{\T}}\Th(\cN)).
 \end{equation} \ee
  Moreover, the fact the above map is a weak-equivalence will follow by showing that, on working locally on the Zariski topology of $\rmZ$, we reduce
  to the case where the normal bundle $\cN$ is in fact trivial, where the calculation reduces to the following: 
\be \begin{align}
     \label{Thom.sp.local}
 \RHom({\Sigma^{\infty}_{\T}}\T^{\rmc} \wedge \rmZ_+, {\Sigma^{\infty}_{\T}}\T^{\rmc} \wedge \rmZ_+) \simeq \RHom({\Sigma^{\infty}_{\T}}\rmZ_+, {\Sigma^{\infty}_{\T}} \rmZ_+). \notag
\end{align} \ee
 \vskip .1cm
 Therefore, it suffices to show that there is natural map as in ~\eqref{map.Thom.sps.1}.
 Recall that the Thom-space $\Th(\cN)$ is defined as the pushout:
 \be \begin{equation}
   \label{Thom.sp.0}
   \xymatrix{{Proj(\cN)} \ar@<1ex>[r] \ar@<1ex>[d] & {Proj(\cN \oplus 1)} \ar@<1ex>[d]\\
             {Spec \, \k} \ar@<1ex>[r] & {\Th(\cN)}.}
\end{equation} \ee
\vskip .1cm
One may observe that this pushout may be also obtained in two stages, by taking the pushout of the first diagram and then the second in:
\be \begin{equation}
   \label{Thom.sp.1}
   \xymatrix{{Proj(\cN)} \ar@<1ex>[r] \ar@<1ex>[d] & {Proj(\cN \oplus 1)} \ar@<1ex>[d]\\  
             {\rmZ} \ar@<1ex>[r] & {\rmS(\cN\oplus 1)},}                                                     
\quad \xymatrix{{\rmZ} \ar@<1ex>[r] \ar@<1ex>[d] & {\rmS(\cN \oplus 1)} \ar@<1ex>[d]\\
                {Spec \, \k} \ar@<1ex>[r] & {\Th(\cN)}.}
             \end{equation} \ee
\vskip .1cm
Next let $\{\rmU_i|i=1, \cdots, n\}$ denote a Zariski open cover of $\rmZ$ so that
$\cN$ is trivial on this cover, that is, $\cN_{|\rmU_i} = \rmU_i \times {\mathbb A}^{\rmc}$, for each $i$. Here we assume that
$c$ is the codimension of $\rmZ$ in $\rmX$. 
Let $\U= \sqcup _i \rmU_i$ and $\U_{\bullet} = cosk_0^{\rmZ}(\U)$, so that in degree $n$,
$ \U_n = (\U{\underset {\rmZ} \times} \cdots {\underset {\rmZ} \times} \U)$, which is the $n$-fold
fibered product of $\U$ with itself over $\rmZ$. Observe that now one has an isomorphism of simplicial schemes
\be \begin{equation}
\label{SN.isom.1}
   \phi:  \U_{\bullet} {\underset {Spec \, \k} \times } \T^{\rmc} {\overset {\cong} \ra} \rmS(\cN \oplus 1)_{|\U_{\bullet}},
    \end{equation} \ee
\vskip .1cm \noindent
where $\rmS(\cN \oplus 1)_{|\U_{\bullet}}$ denotes the pull-back of $\rmS(\cN \oplus 1)$ to $\U_{\bullet}$. This isomorphism defines an isomorphism of
simplicial objects of spectra:
\be \begin{equation}
\label{SN.isom.2}
   {\Sigma^{\infty}}_{ \T}\phi_+: {\Sigma^{\infty}_{\T}} \U_{\bullet, +} {\wedge } \T^{\rmc}_+  =
   {\Sigma^{\infty}_{\T}}(\U_{\bullet} {\underset {Spec \, \k} \times } \T^{\rmc})_+
    {\overset {\cong} \ra} {\Sigma^{\infty}_{\T}}\rmS(\cN \oplus 1)_{|\U_{\bullet}, +}.
    \end{equation} \ee
\vskip .1cm \noindent
Now consider the cosimplicial simplicial sets 
\[Hom(K \wedge {\Sigma^{\infty}_{\T}}\U_{\bullet, +}, {\Sigma^{\infty}_{\T}}\U_{\bullet, +}) \mbox{ and } Hom(K \wedge {\Sigma^{\infty}_{\T}}\rmS(\cN\oplus 1)_{|\U_{\bullet}}, {\Sigma^{\infty}_{\T}}\rmS(\cN \oplus 1)_{|\U_{\bullet}}).\]
Sending an $f: K \wedge {\Sigma^{\infty}_{\T}}\U_{n, +} \ra {\Sigma^{\infty}_{\T}}\U_{m, +}$ to ${\Sigma^{\infty}_{\T}}\phi_+(f \wedge id_{\T^c,+})$ (which denotes the induced map
$K \wedge {\Sigma^{\infty}_{\T}}\rmS(\cN\oplus 1)_{|\U_{\bullet,+}} \ra  {\Sigma^{\infty}_{\T}}\rmS(\cN \oplus 1)_{|\U_{\bullet,+}}$ defined by making use of the isomorphism 
 $ {\Sigma^{\infty}_{ \T}}\phi_+$)
defines a map 
\[Hom(K \wedge {\Sigma^{\infty}_{\T}}\U_{\bullet, +}, {\Sigma^{\infty}_{\T}}\U_{\bullet, +}) \ra Hom(K \wedge {\Sigma^{\infty}_{\T}}\rmS(\cN\oplus 1)_{|\U_{\bullet,+}}, {\Sigma^{\infty}_{\T}}\rmS(\cN \oplus 1)_{|\U_{\bullet,+}})\]
which one may verify is compatible with the cosimplicial simplicial structure on either side. Moreover, one also
obtains a commutative diagram
\be \begin{equation}
\label{section.s}
\xymatrix{{K \wedge {\Sigma^{\infty}_{\T}}\U_{n, +}} \ar@<1ex>[r]^f \ar@<1ex>[d]^s & { {\Sigma^{\infty}_{\T}}\U_{m, +}} \ar@<1ex>[d]^s\\
            {K \wedge {\Sigma^{\infty}_{\T}}\rmS(\cN\oplus 1)_{|\U_{n,+}}} \ar@<1ex>[r]^{\phi(f \times id_{\T^c})} &  { {\Sigma^{\infty}_{\T}}\rmS(\cN\oplus 1)_{|\U_{m,+}}},}
\end{equation}  \ee         
where $s$ denotes the canonical section. Therefore, collapsing the section $s$ defines a map
\[Hom(K \wedge {\Sigma^{\infty}_{\T}}\U_{\bullet, +}, {\Sigma^{\infty}_{\T}}\U_{\bullet, +}) \ra Hom(K \wedge {\Sigma^{\infty}_{\T}}\Th(\cN_{|\U_{\bullet}}), {\Sigma^{\infty}_{\T}}\Th(\cN_{|\U_{\bullet}}))\]
of cosimplicial simplicial sets. Since this is functorial in $K$, it follows that this defines a map of cosimplicial simplicial spectra
 of internal  homs:
\[\RHom({\Sigma^{\infty}_{\T}}\U_{\bullet, +}, {\Sigma^{\infty}_{\T}}\U_{\bullet, +}) \ra \RHom({\Sigma^{\infty}_{\T}}\Th(\cN_{|\U_{\bullet}}), {\Sigma^{\infty}_{\T}}\Th(\cN_{|\U_{\bullet}})).\]
The proof of the proposition may now be completed by observing the weak-equivalences:
\[\RHom({\Sigma^{\infty}_{\T}}\rmZ_+, {\Sigma^{\infty}_{\T}}\rmZ_+)  \simeq \holimD \hocolimD \RHom({\Sigma^{\infty}_{\T}}\U_{\bullet, +}, {\Sigma^{\infty}_{\T}}\U_{\bullet, +}) \mbox{ and }\]
\[\RHom({\Sigma^{\infty}_{\T}}\Th(\cN), {\Sigma^{\infty}_{\T}}\Th(\cN)) \simeq  \holimD \hocolimD \RHom({\Sigma^{\infty}_{\T}}\Th(\cN_{|\U_{\bullet}}), {\Sigma^{\infty}_{\T}}\Th(\cN_{|\U_{\bullet}})) .\]
Here we are making use of the weak-equivalences $\hocolimD {\Sigma^{\infty}_{\T}}\U_{\bullet, +} \simeq {\Sigma^{\infty}_{\T}}\rmZ_+$ (
see for example, \cite{DHI}) and
\be \begin{align}
     \label{hypercover.weak.eq}
     \hocolimD {\Sigma^{\infty}_{\T}}\Th(\cN_{|\U_{\bullet}}) &\simeq \hocolimD ({\Sigma^{\infty}_{\T}}S(\cN \oplus 1)_{|\U_{\bullet}}/s({\Sigma^{\infty}_{\T}}(\U_{\bullet, +})))\\
                                                   &\simeq \hocolimD ({\Sigma^{\infty}_{\T}}S(\cN \oplus 1)_{|\U_{\bullet}})/s(\hocolimD {\Sigma^{\infty}_{\T}}(\U_{\bullet, +})) \notag\\
                                                   &\simeq {\Sigma^{\infty}_{\T}}S(\cN \oplus 1)/{\Sigma^{\infty}_{\T}}\rmZ_+  \simeq {\Sigma^{\infty}_{\T}} \Th(\cN) \notag 
\end{align}\ee
\vskip .1cm \noindent
where $s$ is the section considered in ~\eqref{section.s}. Finally we observe that the homotopy colimit in the left argument
pulls out of the $\RHom( \quad, \quad)$ as a homotopy inverse limit, while the homotopy colimit in the right argument
pulls out as a homotopy colimit, since the left argument of the $\RHom(\quad, \quad)$ is a compact object.
\end{proof}
\vskip .2cm

\section{\bf The rigidity property, Motivic Tubular Neighborhoods and Henselization along smooth closed subschemes} \index{motivic tubular neighborhood} \index{Henselization}
We begin this section by discussing the rigidity property in some detail, giving various criteria that ensure rigidity.
The following proposition lists a small sample of convenient criteria that ensure that a motivic spectrum $\rmM$ has the rigidity property as
in Definition ~\ref{rigid.prop}.
\begin{proposition} \index{rigidity}
\label{rigid.props.1}
In {\rm (i)} through {\rm (iv)}, let $\rmM$ denote a motivic spectrum so that  there exists a prime $\ell \ne char(k)$, so that the homotopy groups
 of the spectrum $\rmM$ are all $\ell$-primary torsion. 
 
\begin{enumerate}[\rm(i)]
 \item  The base field is infinite and $\rmM$ defines an orientable motivic cohomology theory, that is, one that has a theory of Chern classes.
 \item The base field $k$ is algebraically or quadratically closed and of characteristic $0$ with $\ell \ne 2$: there are no further restrictions on the motivic spectrum $\rmM$.
 \item The base field $k$ is infinite, non-real (i.e., not formally real or equivalently $-1$ is a sum of squares) and of characteristic $0$ with $\ell \ne 2$: there are no further restrictions on the motivic spectrum $\rmM$.
\item The base field $k$ is infinite, non-real, of characteristic different from $2$ and the prime $\ell \ne 2$: there are no further restrictions on the motivic spectrum $\rmM$.
\end{enumerate}
 
If any one of the above hypotheses are satisfied, then $\rmM$ has the rigidity property (as in Definition ~\ref{rigid.prop}).
\vskip .1cm
{\rm (v)} Alternatively, if the base field $k$ is of characteristic $0$, $\phi$ is a class in the Grothendieck-Witt group of the field $k$, so that $rank(\phi)$ is invertible in $k$ and
the spectrum $\rmM$ is $\phi$-torsion, that is, $\phi\rmM=0$, then again $\rmM$ has the rigidity property (as in Definition ~\ref{rigid.prop}).
\vskip .1cm
In particular, the spectrum representing algebraic K-theory with finite coefficients prime to the
  characteristic has the rigidity property.
\end{proposition}
\begin{proof} The fact that one has the above rigidity property for the spectrum representing algebraic K-theory follows from
Gabber's theorem which holds for all Hensel pairs: see \cite[Theorem 1]{Gab}. The remaining statements need to be deduced from what is
in the literature on rigidity: statements (i) ((ii) and  (iii)) when $x$ is a $k$-rational point of a smooth variety is stated
in  \cite[Theorem 0.3, Corollary 0.4]{HY07}, as well as \cite[Theorem 1.5]{Y04} and \cite[Corollary 2.6]{Y11}. (See also \cite[Theorem 1.13]{PY02}.) Observe that, in Definition ~\ref{rigid.prop},
one does not require the point $x$ to be a $k$-rational point. Therefore, we proceed to show that the above rigidity property can be deduced
from the corresponding statement for the case $x$ is a $k$-rational point.
\vskip .1cm
Next let $\rmZ$ denote the closure of the given point $\x$ and let $\z= \x$, but viewed as a point of $\rmZ$. 
By replacing $\rmX$ and $\rmZ$ by open subschemes, we may assume without loss of generality that $\rmZ$ is
smooth and $\z$ denotes the generic point of $\rmZ$.
The local structure discussed below in Lemma ~\ref{local.struct} shows that there is a Zariski open neighborhood 
$\rmU_{\z}$ of $\z$ in $\rmX$ and an 
\'etale map $q_{\z}:\rmU_{\z}  \ra {\mathbb A}^n$, (where $n= dim_k(\rmX)$),
so that $\rmU_{\z} \cap \rmZ = q_{\z}^{-1}({\mathbb A}^{n-c} \times \{0\})$, (where $c= codim _{\rmX}(\rmZ)$). Moreover, there is then a smaller open
$\rmV_{\z}$ in $\rmU_{\z}$ which is a Nisnevich neighborhood of $\rmU_{\z} \cap \rmZ$ in $\rmU_{\z}$ and also of $(\rmU_{\z} \cap Z )\times \{0\}$ in 
$(\rmU_{\z} \cap Z) \times {\mathbb A}^c$, in the sense that the conditions in Lemma ~\ref{local.struct}(ii) are satisfied.
Then, since Nisnevich neighborhoods of the form $\rmW_{\z} \times_k \rmW'_0$, where $\rmW_{\z}$ is a Nisnevich neighborhood
of $z$ in $\rmZ$ and $\rmW'_0$ is a Nisnevich neighborhood of $0$ in ${\mathbb A}^c$, are cofinal in the system of all Nisnevich neighborhoods of
 the point $\z \times {\rm 0} $ in $\rmZ \times {\mathbb A}^c$, we obtain
 \be \begin{equation}
      \label{stalk.ident}
      \O_{\rmX, \x}^h \cong \O_{\rmZ, \z}^h \otimes_k \O_{{\mathbb A}^c, 0}^h.
 \end{equation} \ee
\vskip .1cm \noindent
Since $\z$ is the generic point of $\rmZ$, clearly $\O_{\rmZ, \z}^h \cong k(\z)$. Thus $\O_{\rmX, \x}^h \cong k(\z) \otimes_k \O_{{\mathbb A}^c, {\rm 0}}^h$.
At this point, we may consider the scheme ${\rm Spec} \, \k(\z) {\underset {{\rm Spec} \, \k} \times} {\mathbb A}^c$: clearly $\z {\underset {{\rm Spec} \, \k} \times}{\rm 0}$
is a $k(\z)$ rational point of the scheme ${\rm Spec} \, \k(\z) {\underset {{\rm Spec} \, \k} \times} {\mathbb A}^c$. 
\vskip .1cm
It is observed on \cite[p. 441]{HY07} that any generalized orientable motivic cohomology theory is normalized with respect to any field.
Therefore, \cite[Theorem 0.3 and Corollary 0.4]{HY07} apply to prove the statement in (i).
The field $k(\z)$ and the fraction field of
$k(\z) {\underset k \otimes}\O_{{\mathbb A}^c, {\rm 0}}^h$ satisfy the hypotheses of \cite[Corollary 2.6]{Y11}, which proves the statements in (ii) and (iii).
The statement in (iv) also follows from \cite[Corollary 2.6]{Y11}, once the restriction that the field of fractions of the Hensel ring 
be perfect is removed. An analysis of the proof of \cite[Corollary 2.6]{Y11} shows that this condition is put in because of the restriction
that the field be perfect in Morel's theorem as in \cite[Theorem 6.2.2]{Mo4}. By \cite[Theorem 10.12]{BH}, the above assumption is no longer
 needed in the above Theorem of Morel. The fifth statement appears in \cite[Corollary 1.3]{AD} where the hypothesis
 is that the base field is perfect. However, in order to apply this result to residue fields of non-closed points,
 one needs to assume that such residue fields are also perfect, which is guaranteed by the assumption that the characteristic is $0$.

\end{proof}
\begin{remark}
 In view of \cite{AD} and \cite{BH}, it seems possible to obtain more general criteria than those listed in Proposition 
 ~\ref{rigid.props.1} which would ensure rigidity in the sense of Definition ~\ref{rigid.prop}. The purpose of Proposition
 ~\ref{rigid.props.1} is not to provide the most general criteria to ensure such rigidity, but to give a small sample of
 convenient criteria to ensure rigidity.
\end{remark}
 \vskip .2cm
 Let $\rmM \eps \Spt(\k_{mot})$. Then one has Voevodsky's slice tower (see \cite{Voev00}): $\{\rmf_n \rmM|n\}$, where $\rmf_{n+1}\rmM$ is the $n$-th connective cover of $\rmM$. Let $s_{\le n} \rmM $ be the homotopy cofiber of the map 
	$\rmf_{n+1}\rmM \ra \rmM$. Then, as shown in \cite{Pel}, the diagram
\[\xymatrix{{\cdots} \ar@<1ex>[d] \ar@<1ex>[r] & {\rmf_{n+1} \rmM}  \ar@<1ex>[d] \ar@<1ex>[r] & {\rmf_n \rmM} \ar@<1ex>[d] \ar@<1ex>[r] & {\cdots} \ar@<1ex>[d] \\
            {\cdots} \ar@<1ex>[d] \ar@<1ex>[r]^{id} & {\rmM} \ar@<1ex>[d] \ar@<1ex>[r]^{id} & {\rmM} \ar@<1ex>[d] \ar@<1ex>[r]^{id} &{\cdots} \ar@<1ex>[d] \\
            {\cdots} \ar@<1ex>[r] & {s_{\le n}\rmM} \ar@<1ex>[r] & {s_{\le n-1}\rmM} \ar@<1ex>[r] &{\cdots} }
\]
admits a lifting to $\Spt(\k_{mot})$.
 
\vskip .2cm
\begin{proposition}\index{rigidity: slices}
 \label{slices.rigid} Let $\rmM $ denote a motivic spectrum. If the homotopy groups of 
 $\rmM$ are all $\ell$-primary torsion, for some prime $\ell \ne char (k)$, then
 the slices of $\rmM$ have the rigidity property. In particular, if the spectrum $\rmM$ has the rigidity property as in Definition ~\ref{rigid.prop}, then all
 its slices have the rigidity property.
\end{proposition}
\begin{proof} We observe from \cite[Theorem 2.4]{Pel11} that the slices of any motivic spectrum are orientable.
Therefore, in view of Proposition ~\ref{rigid.props.1}(i), it suffices to show that if the spectrum $\rmM$ is such that its homotopy groups are all $\ell$-primary torsion for some
 prime $\ell \ne char (k)$, then the same property holds for its slices. 
For this one needs to recall the construction of the ${\mathbb P}^1$-slices of a motivic spectrum as in \cite[sections 8, 9]{Lev}.
 First one shows that the $\Omega_{\T}$-spectrum, $\widehat \rmM$, associated to $\rmM$ also has its homotopy groups all $\ell$-primary torsion: this follows readily from
 the fact that $\widehat \rmM_n = \colimm \Omega_{\T}^m\rmM_{n+m}$. It follows that $\widehat \rmM_n$ has its homotopy groups all $\ell$-primary torsion, for each fixed $n$.
 Next one constructs a bi-spectrum by taking the $\rmS^1$-suspension spectrum,  $\Sigma^{\infty}_{\rmS^1}\widehat \rmM_n$, of each $\widehat \rmM_n$: 
 $\{\Sigma^{\infty}_{\rmS^1}\widehat \rmM_n|n\ge 0\}$. One may see readily that each of the $\rmS^1$-suspension spectra, $\Sigma^{\infty}_{\rmS^1}\widehat \rmM_n$ also
 has its homotopy groups all $\ell$-primary torsion. Finally one applies the construction of the slices as in \cite[8.3]{Lev} in terms of the slices
  of the $\rmS^1$-spectra, $\Sigma^{\infty}_{\rmS^1}\widehat \rmM_n$. Thus, we reduce to showing that if the homotopy groups of the $\rmS^1$-spectrum $\X$
  are all $\ell$-primary torsion, then its $\rmS^1$-slices also  have their homotopy groups all $\ell$-primary torsion: this is clear from the explicit construction of such slices 
   as in \cite[2.1]{Lev}.
\end{proof}

\begin{corollary} \index{rigidity: slice completed spectra}
 \label{slice.completed.rigid}
 Let $\rmM $ denote a motivic spectrum so that the homotopy groups of 
 $\rmM$ are all $\ell$-primary torsion, for some prime $\ell \ne char (k)$. Then the slice-completed spectrum $\holimn s_{\le n}\rmM$
 has the rigidity property.
\end{corollary}
\begin{proof} The proof is clear in view of Proposition ~\ref{slices.rigid}.
\end{proof}

\vskip .2cm
\begin{lemma} \index{rigidity: \'etale spectra}
 \label{rigid.et.Nis}
 Let $\rmM$ denote a motivic ring spectrum whose homotopy groups are all $\ell$-primary torsion, for a fixed prime $\ell \ne char (k)$.
 Then if $\rmM$ has the rigidity property as in Definition ~\ref{rigid.prop}, its pull-back $\epsilon^*(\rmM)$ to the \'etale site
  also has the rigidity property
\end{lemma}
\begin{proof}
 Clearly for every  Hensel ring $\rmR$ with residue field $\rmK$, the map $\Gamma ({\rm Spec} \, \rmR, \rmM) \ra \Gamma ({\rm Spec}\, \rmK, \rmM)$ is a weak-equivalence, since
 $\rmM$ has the rigidity property (on the Nisnevich site).
 The fact that, if $\rmK_1 \subseteq \rmK_2 $ is an extension of algebraically closed fields, then the induced map $\Gamma ({\rm Spec} \, \rmK_1, \rmM) \ra \Gamma({\rm  Spec} \, \rmK_2, \rmM)$
 is a weak-equivalence is shown in \cite[Theorem 1.10]{Y04}. To see that the same holds when $\rmK_1$ and $\rmK_2$ are only separably closed,
 one observes that for any purely inseparable field extension $\rmK \subseteq \rmK'$ of fields containing the base field $k$, $\Gamma ({\rm Spec} \, \rmK, \rmM) \simeq \Gamma ({\rm Spec }\, \rmK', \rmM)$
 as the homotopy groups of $\rmM$ are all $\ell$ primary torsion and the degree of the field extension is prime to $\ell$.
\end{proof}

\subsection{Motivic tubular Neighborhoods and Henselization along smooth closed subschemes}
\label{m.tub.nbds}
We next discuss gadgets we call motivic tubular neighborhoods: we model this on
the \'etale tubular neighborhoods that have been around since \cite{Cox}. Given a smooth scheme $\rmX$, a {\it rigid Nisnevich cover}
of $\rmX$ is a map of schemes $\rmU \ra \rmX$, which is a Nisnevich cover, and in addition, $\rmU$ is a disjoint union of 
pointed \'etale separated maps $\rmU_x, \rmu_{\rmx} \ra \rmX, \rmx$, where each $\rmU_{\rmx}$ is {\it connected}, and
as $\rmx$ varies over points of $\rmX$, so that the above map
induces an isomorphism of residue fields $\k(\rmu_{\rmx}) \cong \k(\rmx)$. 
Given two rigid Nisnevich  
covers $\rmU \ra \rmX$ and $\rmV \ra \rmZ$, the {\it rigid product} $\rmU {\overset R \times} \rmV$ is given as the disjoint union 
 of $(\rmU_x \times \rmV_y)_0$ which is the connected component of $\rmU_x \times \rmV_y$ 
 indexed by the point $\rmx \times \rmy$ of $\rmX \times \rmZ$.  
 \vskip .1cm
 Given a scheme $\rmX$, a {\it hypercover} $\rmU_{\bullet}$ of $\rmX$ is a simplicial scheme $\rmU_{\bullet}$ together 
 with an \'etale map $\rmU_{\bullet} \ra \rmX$, so that (i) $\rmU_0 \ra \rmX$ is a Nisnevich cover and  (ii)
 the induced map $\rmU_t \ra (cosk^{\rmX}_{t-1}U_{\bullet})_t$ is a Nisnevich cover, for each $t \ge 1$. 
 Such a hypercover is a {\it rigid hypercover} if the given maps in (i) and (ii) are rigid Nisnevich covers. 
 One may readily show that the category of rigid Nisnevich hypercovers of a given scheme $\rmX$ is a left directed
 category: see \cite[section 1]{Cox}. Here it may be important to point out that this category itself (and not the associated homotopy category) is a left directed category,
 and this is because we are working with {\it rigid} hypercoverings. This category will be denoted $\HRR(\rmX)$. 
 \vskip .1cm
 \begin{definition}(Motivic tubular neighborhoods)
 \label{tub.nbds}
 Let $\rmZ$ denote a closed smooth subscheme of a smooth scheme $\rmX$. We define the {\it (rigid) motivic tubular neighborhood}
 of $\rmZ$ in $\rmX$ to be the inverse system of simplicial schemes $\rmN^{\rmZ}_{\bullet}(\rmU_{\bullet})$ for which there exists
 a rigid Nisnevich hypercover $\rmU_{\bullet}$ of $\rmX$ so that $\rmN^{\rmZ}_n(\rmU_{\bullet})= \sqcup _{i} \rmU_{n,i}$, where the sum runs over
  $\rmU_{n,i}$, which are
 connected components of $\rmU_n$ with the property that $\rmU_{n, i}{\underset {\rmX} \times} \rmZ \ne \phi$.  One may readily see that the
 motivic tubular neighborhood of $\rmZ$ in $\rmX$ is a left-directed category. This will be denoted $t_{\rmX/\rmZ}$.
 See \cite[section 1]{Cox} for similar definitions of \'etale tubular neighborhoods. 
 \end{definition}
\begin{remark} The inverse system of all rigid Nisnevich neighborhoods of $\rmZ$ in $\rmX$ corresponds to the 
 Henselizaton of $\rmX$ along $\rmZ$. This will become clear from Theorem ~\ref{coh.tub.nbd}.
\end{remark}
Next we will provide the following Lemma, whose proof is skipped as it follows exactly as in \cite[Lemma 1.2]{Cox}.
\begin{lemma}
 \label{Cox.lemma1.2}
 Given a $\rmV_{\bullet}$ in $t_{\rmX/\rmZ}$, and a separated \'etale map $\phi:\rmW \ra  \rmV_n$,  and so
  that the induced map $\rmW{\underset {\rmX} \times} \rmZ \ra \rmV_n{\underset {\rmX} \times} \rmZ$ is a Nisnevich cover, there is a map
  $\phi_{\bullet}:\rmW_{\bullet} \ra \rmV_{\bullet}$ in $t_{\rmX/\rmZ}$, so that $\phi_n$ factors through the map $\phi$.
\end{lemma}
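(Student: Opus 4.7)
The plan is to adapt the standard proof of \cite[Lemma 1.2]{Cox} to the motivic Nisnevich setting, systematically replacing \'etale covers by rigid Nisnevich covers and using the rigid product $\overset{R}{\times}$ in place of ordinary fiber products so that the rigidity condition is preserved throughout.

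First, I would construct an auxiliary rigid Nisnevich hypercover $\rmW_\bullet'$ of $\rmX$ equipped with a simplicial map $\rmW_\bullet' \to \rmV_\bullet$ whose $n$-th component factors through $\phi$. The construction proceeds by induction on simplicial degree together with a coskeletal argument. Let $\rmV_\bullet^{(n-1)} = \mathrm{cosk}^\rmX_{n-1}(\rmV_\bullet)$, so that the map $\rmV_n \to (\rmV_\bullet^{(n-1)})_n$ is a rigid Nisnevich cover by the hypercover hypothesis on $\rmV_\bullet$. Since $\phi: \rmW \to \rmV_n$ is separated \'etale and becomes a Nisnevich cover after restriction to $\rmZ$, one can form a common rigid Nisnevich refinement $\rmW_n'$ that maps to $\rmV_n$ through $\phi$ and still surjects (as a Nisnevich cover) onto $(\rmV_\bullet^{(n-1)})_n$. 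For degrees $m<n$ one sets $\rmW_m' = \rmV_m$, and for $m>n$ one builds $\rmW_m'$ inductively as a rigid Nisnevich cover of $(\mathrm{cosk}^\rmX_{m-1} \rmW_\bullet')_m$, taking iterated rigid products of face-map pullbacks of $\rmW_n'$ to ensure both the hypercover property and compatibility with the map to $\rmV_\bullet$.

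Second, I would restrict the resulting rigid Nisnevich hypercover to the motivic tubular neighborhood by setting $\rmW_\bullet = \rmN^\rmZ_\bullet(\rmW_\bullet')$, i.e.\ in each degree taking only those connected components whose fiber over $\rmZ$ is nonempty. Since face and degeneracy maps of a rigid hypercover send components meeting $\rmZ$ to components meeting $\rmZ$, this yields a well-defined object of $t_{\rmX/\rmZ}$, and the induced simplicial map $\phi_\bullet : \rmW_\bullet \to \rmV_\bullet$ has the required factorization $\phi_n = \phi \circ (\text{projection})$ at degree $n$ by construction.

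The main obstacle is the inductive step at degrees $m>n$: one must simultaneously maintain (a) the rigid Nisnevich cover condition $\rmW_m' \to (\mathrm{cosk}^\rmX_{m-1}\rmW_\bullet')_m$, (b) compatibility with all face and degeneracy maps, and (c) the factorization through $\phi$ induced in degree $n$. This is handled by successively forming rigid products indexed by the $(m+1)$-tuples of face maps $\rmW_m' \to \rmW_{m-1}' \to \cdots$, relying on left-directedness of $\HRR(\rmX)$ and on the fact that the rigid product of Nisnevich covers is again a Nisnevich cover. A minor subsidiary point is checking that the restriction to components meeting $\rmZ$ commutes with the coskeleton formation up to the needed degree, which follows because $\rmZ$ is closed in $\rmX$ and the structure maps are separated.
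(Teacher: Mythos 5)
Your overall plan---translate Cox's Lemma 1.2 from the \'etale to the rigid Nisnevich setting, build an auxiliary rigid Nisnevich hypercover $\rmW_{\bullet}'$ of $\rmX$ equipped with a map to $\rmV_{\bullet}$ factoring through $\phi$ in degree $n$, and then pass to $\rmN^{\rmZ}_{\bullet}(\rmW_{\bullet}')$---is exactly the approach the paper intends, since the paper simply cites Cox's Lemma 1.2 and asserts that the proof carries over unchanged.

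There is, however, a real gap in your degree-$n$ step as written. You ask for a rigid Nisnevich cover $\rmW_n'$ of $(\mathrm{cosk}^{\rmX}_{n-1}\rmV_{\bullet})_n$ that maps to $\rmV_n$ entirely through $\phi$. But the hypothesis only guarantees that $\phi$ becomes a Nisnevich cover after base change to $\rmZ$; away from $\rmZ$ the image $\phi(\rmW)$ may miss large parts of $\rmV_n$, so no scheme factoring wholly through $\phi$ can cover $(\mathrm{cosk}^{\rmX}_{n-1}\rmV_{\bullet})_n$. The correct move, which is what Cox does, is to first enlarge $\rmW$ to an honest cover by adjoining components disjoint from $\rmZ$ (say a rigid cover of $\rmV_n\setminus(\rmV_n\times_{\rmX}\rmZ)$), build the hypercover $\rmW_{\bullet}'$ from that, and only afterwards restrict to $\rmN^{\rmZ}_{\bullet}$, where the adjoined components vanish and the surviving components (those meeting $\rmZ$) do factor through $\phi$. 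A second, smaller omission: in degrees $m>n$, choosing an arbitrary rigid cover of $(\mathrm{cosk}^{\rmX}_{m-1}\rmW_{\bullet}')_m$ does not automatically yield a simplicial object, because the degeneracy maps into $\rmW_m'$ need not lift through such a cover. You need either the standard split-hypercover construction (freely adjoining the degenerate part in each degree) or Cox's explicit device of forming a fiber product over $\rmV_n$ indexed by all nondecreasing surjections $[m]\to[n]$, which bakes in both face and degeneracy compatibility; in the present setting the ordinary fiber products in that formula must be replaced by rigid products so that rigidity is preserved.
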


 Next we recall that the main model structure we use on the category ${\widetilde \Spt}^{\rmG}(\k_{\rm mot})$ is defined as follows.
 First we start with the injective model structure on the category $\Spc_*(\k)$ of pointed simplicial presheaves, where cofibrations (weak-equivalences) are 
 section-wise cofibrations (weak-equivalences, \res) of pointed simplicial sets, and 
 fibrations are defined by the right lifting property with respect to
  maps that are trivial cofibrations. Then we localize this model structure by inverting maps that are both stalk-wise weak-equivalences
   and also maps of the form ${\mathbb A}^1 \times \rmU \ra \rmU$, for any $\rmU$ in the site. We will
   call the resulting category, the category of {\it motivic spaces} and denoted $\Spc_*(\k_{\rm mot})$.
   Recall that the objects of the category ${\widetilde \Spt}^{\rmG}$
   are $\Spc_*(\k_{\rm mot})$-enriched functors $\Sph^{\rmG} \ra \Spc_*(\k_{\rm mot})$, where $\Spc_*(\k_{\rm mot})$ is provided with the above model
   structure. We start with the {\it level-wise injective model structure}
   on this category, where the cofibrations (weak-equivalences) are maps $\phi:\X' \ra \X$ for which the induced map 
   $\phi(\rmT_{\rmV}) :\X'(\rmT_{\rmV}) \ra \X (\rmT_{\rmV})$ is a cofibration (weak-equivalence, \res) for every $\rmT_{\rmV}$. Finally we
    obtain the corresponding stable model structure, where the fibrant objects are the $\Omega$-spectra. A map between
    two fibrant spectra $\rmM' =\{\rmM'(\rmT_{\rmV})|\rmV\} \ra \rmM =\{\rmM(\rmT_{\rmV})|\rmV\}$ is a weak-equivalence if and only if the map $\rmM'(\rmT_{\rmV}) \ra \rmM(\rmT_{\rmV})$ is
    a weak-equivalence for each $\rmV$, in the level-wise injective model structure, which implies (in view of the above discussion)
    that it is a stalk-wise weak-equivalence  of ${\mathbb A}^1$-localized simplicial presheaves.
    \vskip .1cm
    We also introduced in \cite[Definition 4.6]{CJ23-T1} the category $\Spt(\k_{mot})$ whose objects are sequences $\{\rmM_n|n \ge 0\}$ of
    motivic spaces, together with a compatible family of structure maps $\rmS^1 \wedge \rmM_n \ra \rmM_{n+1}$, $n \ge 0$. We point out 
    the following important fact: 
    one has a Quillen equivalence between the model category $\Spt(\k_{mot})$ of motivic spectra
    and the model category ${\widetilde \Spt}^{\rmG}(\k_{\rm mot})$. See \cite[Proposition 6.2]{CJ23-T1}. 
    \vskip .1cm
    We will put the {\it level-wise injective} model structure on the category $\Spt(\k_{mot})$, where cofibrations and weak-equivalences
    are defined level-wise and fibrations defined by the right-lifting property with respect to trivial cofibrations. 
    A motivic {\it $\Omega$-bi-spectrum}
    ${\mathcal S}$ is given by a sequence $\{{\mathcal S}_n|n \ge 0\}$ of motivic $\rmS^1$-spectra, together with 
    compatible weak-equivalences $\{{\mathcal S}_n \ra \Omega_{\T}({\mathcal S}_{n+1})|n \ge 0\}$. One may define motivic bi-spectra similarly, 
    by just relaxing the condition that the maps ${\mathcal S}_n \ra \Omega_{\T}({\mathcal S}_{n+1})$ are weak-equivalences. With suitable model
    structures, the above categories of spectra are all Quillen-equivalent.
    (See \cite[section 8]{Lev}, where such
    spectra are called by a slightly different name. The relationship between various categories of spectra is discussed
    there.) 
    \vskip .1cm
    We define {\it rigidity} for motivic $\rmS^1$-spectra just as in Definition ~\ref{rigid.prop}.
    A useful observation for us is the following: given a motivic $\rmS^1$-spectrum $\rmM =\{\rmM_n|n\ge 0\}$, so that (i) each $\rmM_n$ is ${\mathbb A}^1$-local and stalk-wise fibrant, one may obtain a fibrant replacement (that is, fibrant in
    the level-wise injective model structure on $\Spt(\k_{mot})$ by applying the canonical Godement resolution $\rmG^{\bullet}$
    (which produces a cosimplicial object) and then taking its homotopy inverse limit): the resulting motivic spectrum will be
    denoted $\rmG(\rmM)$. 
    \begin{proposition}
     \label{ss.s1.spectra} \index{Nisnevich hypercover}
     \begin{enumerate}[\rm(i)]
     \item Given any motivic $\rmS^1$-spectrum $\rmM =\{\rmM_n|n \ge 0\}$, there exists a spectral sequence
     \[\rmE_2^{s,t} = \colimalpha \rmH^s(\Gamma(\rmU_{\bullet}^{\alpha}, \pi_t(\rmM))) \Ra \pi_{-s+t} (\holimD \colimalpha \Gamma (\rmU_{\bullet}^{\alpha}, \rmM))\]
     where the colimit is taken over all rigid Nisnevich hypercovers of the given smooth scheme $\rmX$. This spectral
     sequence converges strongly as $\rmE_2^{s,t} \cong \rmH^s_{Nis}(\rmX, a\pi_t(\rmM)) =0$ for all $s> dim (X)$,
      where $a\pi_t(\rmM)$ denotes the associated abelian sheaf.
     \vskip .1cm
     \item If $\rmM$ is a motivic $\rmS^1$-spectrum as in (i) and is replaced by its fibrant replacement in the injective model structure on $\Spt(\k_{mot})$ 
     (for example, its Godement resolution $\rmG(\rmM)$), then the map from the spectral sequence
     \[\rmE_2^{s,t} = \rmH^s(\Gamma (\rmU_{\bullet}^{\alpha_0}, \pi_t(\rmG(\rmM)))) \Ra \pi_{-s+t} (\holimD \Gamma (\rmU_{\bullet}^{\alpha_0}, \rmG(\rmM)))\]
     for a fixed rigid Nisnevich hypercover $\rmU_{\bullet}^{\alpha_0}$, to the spectral sequence in (i), is an isomorphism.
    \end{enumerate}
    \end{proposition}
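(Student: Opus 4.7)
For part (i), my plan is to form, for each rigid Nisnevich hypercover $\rmU_\bullet^\alpha$ of $\rmX$, the cosimplicial spectrum $[n] \mapsto \Gamma(\rmU_n^\alpha, \rmM)$, and then pass to the filtered colimit $C^\bullet := \colimalpha \Gamma(\rmU_\bullet^\alpha, \rmM)$ along the left-directed category of rigid Nisnevich hypercovers (left-directedness was recorded just after Definition \ref{tub.nbds}). Applying the Bousfield--Kan spectral sequence to $C^\bullet$ produces a conditionally convergent spectral sequence
\[
E_2^{s,t} \;=\; \rmH^s\!\bigl(\pi_t(C^\bullet)\bigr) \;\Longrightarrow\; \pi_{-s+t}(\holimD C^\bullet).
\]
Filtered colimits of abelian groups are exact and commute both with stable homotopy groups and with cohomology of cosimplicial abelian groups, so the $E_2$-page rewrites as $\colimalpha \rmH^s(\Gamma(\rmU_\bullet^\alpha, \pi_t(\rmM)))$, as stated.

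The next step is to identify this colimit with Nisnevich sheaf cohomology. By Lemma \ref{Cox.lemma1.2}, rigid Nisnevich hypercovers are cofinal among Nisnevich hypercovers, so the standard Verdier-style presentation of sheaf cohomology as a filtered colimit indexed by hypercovers yields
\[
\colimalpha \rmH^s\bigl(\Gamma(\rmU_\bullet^\alpha, \pi_t(\rmM))\bigr) \;\cong\; \rmH^s_{\Nis}(\rmX, a\pi_t(\rmM)),
\]
where $a\pi_t(\rmM)$ denotes the Nisnevich sheafification. Strong convergence then follows from the vanishing $\rmH^s_{\Nis}(\rmX, \mathcal{F}) = 0$ for $s > \mathrm{dim}\,\rmX$ on any Noetherian scheme of finite Krull dimension: the $E_2$-page is supported in a horizontal strip of finite width, so the spectral sequence degenerates after finitely many pages.

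For part (ii), the essential input is that a fibrant object in the injective model structure on $\Spt_{\rmS^1}$ represents a presheaf of spectra $\rmU \mapsto \Gamma(\rmU, \rmM)$ that already satisfies Nisnevich hyperdescent. Consequently, for any rigid Nisnevich hypercover $\rmU_\bullet^{\alpha_0}$, the augmentation $\Gamma(\rmX, \rmM) \to \holimD \Gamma(\rmU_\bullet^{\alpha_0}, \rmM)$ is a weak equivalence, and every refinement map $\holimD \Gamma(\rmU_\bullet^{\alpha_0}, \rmM) \to \holimD \Gamma(\rmU_\bullet^\alpha, \rmM)$ is likewise a weak equivalence. Therefore the map of spectral sequences induced by inserting the single cover $\rmU_\bullet^{\alpha_0}$ into the colimit system is an isomorphism from $E_2$ onward and on abutments. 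The Godement resolution $\rmG(\rmM)$, obtained by totalizing the cosimplicial canonical flasque resolution, provides a concrete functorial such fibrant replacement compatible with restriction to varying $\rmU$.

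The main anticipated obstacle is the interchange of $\holimD$ with $\colimalpha$ required to identify the abutment $\pi_*(\holimD \colimalpha \Gamma(\rmU_\bullet^\alpha, \rmM))$ in a form suitable for the spectral sequence. This is precisely where the bound $s \le \mathrm{dim}\,\rmX$ established in part (i) does essential work: the $\holimD$ reduces to a tower of fibrations of bounded length, which commutes with filtered colimits of spectra up to weak equivalence. This is where the Noetherian finite-Krull-dimensional hypothesis on $\rmX$ is genuinely used.
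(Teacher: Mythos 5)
Your treatment of part (i) matches the paper's approach in substance: both use the Bousfield--Kan / Thomason-style descent spectral sequence for a cosimplicial spectrum indexed by hypercovers (the paper invokes Thomason's Proposition~1.16), both identify the $E_2$-page with Nisnevich sheaf cohomology via the computation of sheaf cohomology by hypercovers (the paper cites SGA4), and both obtain strong convergence from the bound $\mathrm{cd}_{Nis}(\rmX)\le \dim \rmX$. Your closing remark about commuting $\holimD$ past the filtered colimit is a legitimate concern that the paper leaves tacit, and your resolution via the bounded tower is reasonable.

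Part (ii) has a genuine gap. You establish, via hyperdescent, that all refinement maps induce weak equivalences on $\holimD$, and conclude from this that ``the map of spectral sequences\ldots is an isomorphism from $E_2$ onward.'' That inference is not valid: a map of strongly convergent, filtration-compatible spectral sequences that is an isomorphism on abutments is an isomorphism on $E_\infty$ (associated graded), but need not be an isomorphism on $E_2$. To prove the stated claim one must compare $E_2$-pages directly, which is precisely what the paper does: it identifies $\rmH^s(\Gamma(\rmU_\bullet^{\alpha_0},\pi_t(\rmG\rmM)))$ with $\rmH^s_{Nis}(\rmX,a\pi_t(\rmM))$ by exploiting the flasque nature of the Godement resolution (so that a \emph{single} rigid hypercover already computes sheaf cohomology of $\pi_t$), observes that this agrees with the $E_2$-page from (i), and only then invokes strong convergence to get the abutment isomorphism as a \emph{consequence}. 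Your argument runs in the opposite, invalid direction (abutment $\Rightarrow E_2$), and never addresses the $E_2$-page at all; this is the missing ingredient.
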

\begin{proof} (i) The existence of the spectral sequence and the identification of the $E_2$-terms follow
 readily in view of the discussion in \cite[Proposition 1.16]{Th85}. At this point, one knows that cohomology on any site with respect to
 an abelian presheaf may be computed using hypercoverings in that site: see \cite{SGA4}. Therefore, the $\rmE_2^{s,t}$-term in (i)
 identifies with $\rmH^s_{Nis}(X, a\pi_t(\rmM))$, where $ a\pi_t(\rmM)$ denotes the sheaf associated to the
  presheaf $\pi_t(\rmM)$. 
 Finally one uses the fact that the Nisnevich site of the smooth scheme $\rmX$ has cohomological dimension given by $dim(\rmX)$
 to complete the proof of (i). 
 \vskip .1cm
 (ii) We will assume that $\rmM$ is replaced by $\rmG(\rmM)  = \holimD \rmG^{\bullet}(\rmM)$. 
 Then the $\rmE_2^{s,t}$-term of the spectral sequence in (ii) identifies with
 $\rmH^s(\Gamma(\rmU_{\bullet}^{\alpha_0}, \pi_t(\rmG(\rmM)))) \cong 
 \rmH^s(\Gamma(\rmU_{\bullet}^{\alpha_0}, \rmG^{\bullet} \pi_t(\rmM))) \cong  
 \rmH^s_{Nis}(\rmX, a\pi_t(\rmM))$. Thus these $\rmE_2^{s,t}$-terms do not depend on the choice of the rigid
 Nisnevich hypercover, and also identifies with the $\rmE_2^{s,t}$-term of the spectral sequence in (i). Since both
 spectral sequences converge strongly, we obtain an isomorphism on the abutments, thereby proving (ii). 
\end{proof}
\vskip .2cm

Next we recall the definition of {\it Hensel pairs} for affine schemes from \cite[Definition 15.11.1]{St}. 
Accordingly an (affine) Hensel pair is given by a pair $(\rmA, \rmI)$ where $\rmA$ is a commutative ring with $1$ and
$\rmI$ is an ideal in $\rmA$ contained in the Jacobson radical of $\rmA$, so that for any monic polynomial $ f \in \rmA[\rmT]$ and factorization $\bar \rmf = \bar \rmg. \bar \rmh$ with $\bar \rmg, \bar \rmh \in (\rmA/\rmI) [\rmT]$, which is monic and generating the unit 
ideal in $\rmA/\rmI[\rmT]$, there exists a factorization $\rmf=\rmg. \rmh$ in $\rmA[\rmT]$ with $\rmg, \rmh$
monic and $\bar \rmg$ ( and $\bar \rmh$) the image of $\rmg$ ($\rmf$) in $\rmA/\rmI[\rmT]$. 
\vskip .1cm
It is important to view the above {\it affine Hensel pair as the affine scheme given by} $(Spec \, (\rmA/\rmI), \rmA)$, that is,
 the underlying topological space is $Spec (\rmA/\rmI)$ and the structure sheaf is the one defined by the ring $\rmA$.
\vskip .1cm
\begin{definition} (Hensel pairs and Henselization) \index{Hensel pair} \index{Henselization}
 Let $\rmX$ be a given scheme and $\rmZ$ a closed subscheme of $\rmX$. Then ($\rmX$, $\rmZ)$ is a {\it Hensel pair}
 if for every affine open cover $\{\rmU_i|i\}$ of $\rmX$, $(\rmU_i, \rmU_i \cap \rmZ)$ is Hensel pair as in
 \cite[Definition 15.11.1]{St}. Given a scheme $\rmX$ and a closed subscheme $\rmZ$ of $\rmX$,
 the Henselization of $\rmX$ along $\rmZ$ is the scheme obtained by gluing the Henselization of the
 affine schemes $\{\rmU_i|i\}$ along the closed subschemes $\rmU_i \cap \rmZ$. This will be denoted $\rmX_{\rmZ}^h$.
 \vskip .1cm
 It is important to view the scheme $\rmX_{\rmZ}^h$ as given by the underlying space $\rmZ$ and provided with the structure sheaf
 $\O_{\rmX, \rmZ}^h$ which is obtained by the gluing $(\rmZ \cap \rmU_i, \O_{\rmU_i, \rmU_i \cap \rmZ}^h)$ for 
 any affine open cover $\{\rmU_i|i\}$ of the scheme $\rmX$. (See \cite[p. 213]{Cox}
\end{definition}
\begin{lemma}
 \label{funct.Hensel}
 If 
 \[ \xymatrix{{\rmZ'} \ar@<1ex>[r] \ar@<1ex>[d] & {\rmX'} \ar@<1ex>[d]\\
               {\rmZ} \ar@<1ex>[r] & {\rmX}}
 \]
is a cartesian square where the horizontal maps are closed immersions, one obtains an induced map
$\rmX_{\rmZ'}'^h \ra \rmX_{\rmZ}^h$.
\end{lemma}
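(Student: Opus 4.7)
The plan is to reduce the construction of the induced map to the affine case, where the universal property of the Henselization of a Hensel pair supplies the map, and then to glue. First I would choose a compatible pair of affine open covers: pick an affine open cover $\{\rmU_i = \Spec(\rmA_i)|i\}$ of $\rmX$, and then set $\rmU_i' = \rmU_i \times_{\rmX} \rmX' = \Spec(\rmA_i')$, which gives an affine open cover of $\rmX'$ since the right-hand vertical map is (in particular) separated over the affines in question (alternatively, one works with a further affine refinement of each $\rmU_i'$). Let $\rmI_i \subseteq \rmA_i$ denote the ideal of $\rmZ\cap \rmU_i$ and let $\rmI_i' \subseteq \rmA_i'$ denote the ideal of $\rmZ'\cap \rmU_i'$. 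Cartesianness of the square together with the fact that the horizontal maps are closed immersions ensures that $\rmI_i'$ is precisely the image of $\rmI_i$ under $\rmA_i \ra \rmA_i'$, so that the ring map carries the pair $(\rmA_i,\rmI_i)$ to the pair $(\rmA_i',\rmI_i')$.

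Next, I would apply the universal property of the Henselization of a Hensel pair (see \cite[Tag 0A02]{St}): for each $i$, the Henselization $(\rmA_i^h, \rmI_i^h)$ is the initial object among Hensel pairs receiving a compatible map from $(\rmA_i,\rmI_i)$. The composite $\rmA_i \ra \rmA_i' \ra \rmA_i'^h$ lands in a Hensel pair whose ideal corresponds to $\rmI_i'^h$, so by universality it extends uniquely to a homomorphism of pairs $(\rmA_i^h,\rmI_i^h) \ra (\rmA_i'^h, \rmI_i'^h)$. Taking $\Spec$ yields a morphism of Hensel affine schemes $(\rmU_i')_{\rmZ'\cap \rmU_i'}^h \ra (\rmU_i)_{\rmZ\cap \rmU_i}^h$ compatible with the topological pieces $\rmZ'\cap \rmU_i' \ra \rmZ\cap \rmU_i$.

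The remaining step is to glue. For any two indices $i, j$, choose an affine open cover $\{\rmU_{ijk}\}$ of $\rmU_i\cap \rmU_j$ with preimage cover $\{\rmU'_{ijk}\}$ of $\rmU_i'\cap \rmU_j'$; the uniqueness clause in the universal property of Henselization applied to each $\rmU_{ijk}$ shows that the two restrictions from $\rmU_i$ and from $\rmU_j$ of the locally constructed maps agree on the Henselization along $\rmZ\cap \rmU_{ijk}$. Since the underlying topological space of $\rmX_{\rmZ}^h$ is $\rmZ$ and its structure sheaf is obtained by gluing the $\O_{\rmU_i,\rmU_i\cap \rmZ}^h$, the collection of locally defined morphisms descends to the global morphism $\rmX_{\rmZ'}'^h \ra \rmX_{\rmZ}^h$.

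The only place where care is required is the compatibility on overlaps; this is not really an obstacle because the universal property of Henselization of a Hensel pair automatically forces the uniqueness needed for gluing, but it is the only step where cartesianness of the square is genuinely used (beyond identifying $\rmI_i' = \rmI_i\rmA_i'$). Everything else is formal.
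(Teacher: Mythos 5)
Your proof is correct and takes exactly the route the paper indicates: the paper's own proof simply says the lemma is an easy exercise from the definition of Henselization, obtained by reducing to the affine case, and you have filled in precisely that reduction (universal property of Henselization of a Hensel pair on affines, then gluing via uniqueness on affine overlaps).
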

\begin{proof} This is skipped as it is an easy exercise to complete from the definition of Henselization: one may
in fact reduce to the case where all the schemes are affine.
\end{proof}
\vskip .1cm
\begin{theorem} \index{rigidity}
 \label{coh.tub.nbd}
 Let $i: \rmZ \ra \rmX$ denote a closed immersion of smooth schemes. 
 \begin{enumerate}[\rm(i)]
 \item Then for any motivic spectrum $\rmM=\{\rmM_n|n \}$ in 
 $\Spt(\k_{mot})$, one obtains a weak-equivalence
 \[\holimD \colimalpha \Gamma(\rmN^{\rmZ}(\rmU_{\bullet}^{\alpha}), \rmM) \simeq \H_{\Nis}(\rmX_{\rmZ}^h, \rmM)\]
 where $\rmU_{\bullet}^{\alpha}$ varies among all hypercoverings of $\rmX$, $\rmX_{\rmZ}^h$ denotes the Henselization of the
  scheme $\rmX$ along $\rmZ$, and $\H_{\Nis}(\rmX_{\rmZ}^h, \rmM)$ denotes the
 spectrum $\Gamma(\rmX_{\rmZ}^h, G(\rmM))$, with ${\rm G}(\rmM)$ denoting a fibrant replacement of $\rmM$ as in Proposition ~\ref{ss.s1.spectra}. 
 \vskip .1cm
 \item If in addition, the spectrum $\rmM$  has the rigidity property in Definition ~\ref{rigid.prop}, then
 one obtains the weak-equivalence: 
 \[\H_{\Nis}(\rmX_{\rmZ}^h, \rmM) \simeq \H_{Nis}(\rmZ, \rmM).\]
 \vskip .1cm
 \item If $\rmX$ and $\rmZ$ are provided with the action of a linear algebraic group $\rmG$ with the map $i$ $\rmG$-equivariant, the same
 conclusions also hold for any spectrum $\rmM \eps {\widetilde \Spt}^{\rmG}(\k_{\rm mot})$.
 \vskip .1cm
 \item Corresponding results also hold for hypercohomology computed on the \'etale site, provided the base field $\k$ has finite $\ell$-cohomological dimension
 for some prime $\ell \ne char(\k)$ and the homotopy groups of the spectrum $\rmM$ are $\ell$-primary torsion.
 \end{enumerate}
\end{theorem}
\begin{proof} 
First, making use of the relationship between motivic $\rmS^1$-spectra and motivic $\T$-spectra, we observe that it suffices to prove the first 
two statements for motivic $\rmS^1$-spectra. Therefore, we will assume that $\rmM$ denotes a motivic $\rmS^1$-spectrum throughout the proof 
of the first two statements. Throughout this proof, we will also adopt the following notational conventions. 
For a smooth scheme $\rmY$, we let $\rmM_{|\rmY}$ denote the restriction of $\rmM$ to the {\it small} Nisnevich site of the scheme $\rmY$. 
Given a presheaf $\rmP$ on the small Nisnevich site of the scheme $\rmX$, we will let $i^{-1}(\rmP)$ denote the
 restriction of $\rmP$ to the small Nisnevich site of the closed subscheme $\rmZ$.
 \vskip .2cm
 As in Proposition ~\ref{ss.s1.spectra}, one obtains  spectral sequences:
 \be \begin{align}
 \label{comp.ss.1}
 \rmE_2^{s,t}(1) = \colimalpha\rmH^s(\Gamma(\rmN^{\rmZ}(\rmU_{\bullet}^{\alpha}), \pi_t(\rmM_{|\rmX})) &\Ra \pi_{\rm -s+t}(\holimD \colimalpha \Gamma(\rmN^{\rmZ}(\rmU_{\bullet}^{\alpha}), \rmM_{|\rmX})) \mbox{ and }\\
\rmE_2^{s,t}(2) =\rmH^s(\rmZ, \pi_t(i^{-1}\rmM_{|\rmX})) &\Ra  \pi_{\rm -s+t}\H(\rmZ, i^{-1}\rmM_{|\rmX})\notag.
\end{align} \ee
Since every scheme that appears in the simplicial scheme $\rmN^{\rmZ}(\rmU_{\bullet}^{\alpha})$ in each degree belongs to the small Nisnevich site of $\rmX$,
one may identify the first spectral sequence with
\[\rmE_2^{s,t}(1) = \colimalpha\rmH^s(\Gamma(\rmN^{\rmZ}(\rmU_{\bullet}^{\alpha}), \pi_{\rm t}(\rmM)) \Ra \pi_{\rm -s+t}(\holimD \colimalpha \Gamma(\rmN^{\rmZ}(\rmU_{\bullet}^{\alpha}), \rmM)).\]
\vskip .1cm
In addition, there is also a third spectral sequence:
\be \begin{align}
 \label{comp.ss.2}
 \rmE_2^{s,t}(3) = \rmH^s(\rmZ, \pi_t(\rmM))\cong  \rmH^s(\rmZ, \pi_{\rm t}(\rmM_{|\rmZ})) &\Ra \pi_{\rm -s+t}\H(\rmZ, \rmM) \cong \pi_{-s+t}\H(\rmZ, \rmM_{|\rmZ}).
\end{align} \ee
\vskip .1cm
We reduce to showing there are natural maps of these spectral 
sequences inducing  an isomorphism at the $\rmE_2$-terms, and that all three of these spectral sequences converge strongly. First making use of
Lemma ~\ref{Cox.lemma1.2}, we obtain the identification:
\[\colimalpha\rmH^s(\Gamma(\rmN^{\rmZ}(\rmU_{\bullet}^{\alpha}), \pi_t(\rmM_{|\rmX})) \cong \colimalpha\rmH^s_{Nis}(\rmN^{\rmZ}(\rmU_{\bullet}^{\alpha}), \pi_{\rm t}(\rmM_{|\rmX})),\]
where the term on the left (right) denotes the cohomology of the co-chain complex $\Gamma(\rmN^{\rmZ}(\rmU_{\bullet}^{\alpha}), \pi_{\rm t}(\rmM_{|\rmX})$
(the Nisnevich hypercohomology of $\rmN^{\rmZ}(\rmU_{\bullet}^{\alpha})$ with respect to the abelian sheaf $\pi_{\rm t}(\rmM_{|\rmX})$, \res). 
Observe that $\rmN^{\rmZ}(\rmU_{\bullet}^{\alpha}){\underset {\rmX} \times} \rmZ$ is a Nisnevich hypercover of $\rmZ$, so that one obtains
a natural map
\[\colimalpha \rmH^s_{Nis}(\rmN^{\rmZ}(\rmU_{\bullet}^{\alpha}), \pi_{\rm t}(\rmM_{|\rmX})) \ra \colimalpha \rmH^s_{Nis}(\rmN^{\rmZ}(\rmU_{\bullet}^{\alpha}){\underset {\rmX} \times} \rmZ, \pi_{\rm t}(\rmM_{|\rmX})) \cong \rmH^s_{Nis}(\rmZ, \pi_t(i^{-1}(\rmM_{|\rmX}))).\]
This provides a map between the first two spectral sequences. To prove that this map will be an isomorphism at the $\rmE_2$-terms, exactly the same
arguments as in the proof of \cite[Theorem 1.3]{Cox} carry over from the \'etale framework to the Nisnevich framework, we are considering. 
(One may observe that \cite[Theorem 1.3]{Cox} depends strongly on \cite[Lemma 1.2]{Cox}, which is the precise analogue of Lemma ~\ref{Cox.lemma1.2}.)
Now it is clear that $\rmE_2^{s,t}=0$, for all $s> dim_k(\rmZ)$, so that both these spectral sequences converge strongly providing the 
required isomorphism at the abutments. Observe that the stalk of $\pi_t(\rmM_{|\rmX})$ at a point $\z \eps \rmZ$ identifies with $\pi_t(\Gamma(Spec \, \O_{\rmX, \z}^h, \rmM))$, so that 
making use of the Godement resolution, we obtain the identification 
\be \begin{equation}
\label{rigid.identity.1}
\rmH^s_{Nis}(\rmZ, \pi_{\rm t}(i^{-1}\rmM_{|\rmX})) \cong \rmH^s_{Nis}(\rmX^h_{\rmZ}, \pi_{\rm t}(\rmM_{|\rmX})).
\end{equation} \ee
\vskip .1cm
This proves the first statement. Next we consider the second statement.
Observe that there is a map from the second spectral sequence in ~\eqref{comp.ss.1} to the spectral sequence in ~\eqref{comp.ss.2}. As both spectral
 sequences converge strongly,  it suffices to show that the obvious map of sheaves $\pi_{\rm t}(\rmM_{|\rmX}) \ra \pi_{\rm t}(\rmM_{|\rmZ})$ is an isomorphism stalk-wise at every point of $\rmZ$.
 As observed above,  the stalk of $\pi_t(\rmM_{|\rmX})$ at a point $\z \eps \rmZ$ identifies with 
 $\pi_{\rm t}(\Gamma({\rm Spec} \, \O_{\rmX, \z}^h, \rmM))$. The stalk of $\pi_{\rm t}(\rmM_{|\rmZ})$ 
   at the same point $\z$ identifies with 
 $\pi_{\rm t}(\Gamma({\rm Spec} \, \O_{\rmZ, \z}^h,  \rmM))$. By the assumed rigidity property of $\rmM$, both of the above groups identify with 
 $\pi_{\rm t}(\Gamma({\rm Spec}\, \k(\z), \rmM))$. Therefore, the
 required isomorphism follows from the assumed rigidity property of the spectrum $\rmM$ and the isomorphism in ~\eqref{rigid.identity.1}.
 This completes the proof of the second statement.
 The third statement now follows in view of
the Quillen-equivalence of model categories between ${\widetilde \Spt}^{\rmG}(\k)$ and the model category of motivic spectra 
established in \cite[Proposition 6.2]{CJ23-T1}.
\vskip .1cm
Next we consider the statement in (iv). One can see that essentially the same spectral sequences exist on the \'etale site: their strong
convergence is guaranteed by the assumption that the base field $\k$ has finite $\ell$-cohomological dimension.  Now, the main point is to show
that one obtains a weak-equivalence 
\be \begin{equation}
\label{Hensel.et}
 \H_{et}(\rmX_{\rmZ}^h, \epsilon^*(\rmM)) \simeq \H_{et}(\rmZ, \epsilon^*(\rmM)).
\end{equation} \ee
 For a smooth scheme $\rmY$, we let $\epsilon^*(\rmM)_{|\rmY}$ denote the restriction of $\epsilon^*(\rmM)$ to the {\it small} \'etale site of the scheme $\rmY$. 
Given a presheaf $\rmP$ on the small \'etale site of the scheme $\rmX$, we will let $i^{-1}(\rmP)$ denote the
 restriction of $\rmP$ to the small \'etale site of the closed subscheme $\rmZ$.
As the space underlying the scheme $\rmX^h_{\rmZ}$ is just the space underlying the scheme $\rmZ$, the left-hand-side of ~\eqref{Hensel.et}
identifies with $\H_{et}(\rmZ, i^{-1}(\epsilon^*(\rmM)_{|\rmX}))$. The right-hand-side of ~\eqref{Hensel.et} identifies with 
$\H_{et}(\rmZ, \epsilon^*(\rmM)_{|\rmZ})$. Now the stalk of $i^{-1}(\epsilon^*(\rmM)_{|\rmX})$ at a geometric point $\bar \z$, corresponding to a point $\z \in \rmZ$, is given 
by $\Gamma({\rm Spec} \, (\O_{\rmX,\z}^{sh}), \rmM)$ while the stalk of $\epsilon^*(\rmM)_{|\rmZ}$ at the same geometric point $\bar z$
is given by $\Gamma ({\rm Spec} \, (\O_{\rmZ, \z}^{sh}), \rmM)$. By the assumed rigidity property of $\rmM$, both of these
identify with $\Gamma({\rm Spec} \, ({\overline {k(\z)}}), \rmM)$, where ${\overline {k(\z)}}$ denotes the separable closure of $ {k(\z)}$. This then provides the required 
weak-equivalence of the \'etale hypercohomology spectra in ~\eqref{Hensel.et}, as the corresponding spectral sequences that compute the homotopy groups of the hypercohomology spectra converge strongly.
\end{proof}
\section{\bf More on Nisnevich neighborhoods}\index{rigidity: model structures}
\begin{lemma} \index{Nisnevich neighborhood}
 \label{local.struct}
  Let $i: \rmZ \ra \rmX$ denote a closed immersion of smooth schemes of finite type over $k$ of pure codimension $c$ and $\rmX$ is of pure dimension $n$. Then the following hold.
  \begin{enumerate}[\rm(i)]
   \item For every point $\z \eps \rmZ$, there exists a Zariski neighborhood $\rmU_{\z}$ of $\z$ in $\rmX$ 
   and an \'etale map $q_{\z}: \rmU_{\z} \ra {\mathbb A}^n$, so that one has the 
 cartesian square:
 \[\xymatrix{{\rmU_{\z} \cap \rmZ} \ar@<1ex>[r] \ar@<1ex>[d]^{q'_{\z}} & {\rmU_{\z}} \ar@<1ex>[d]^{q_{\z}}\\
             {{\mathbb A}^{n-c}} \ar@<1ex>[r] & {{\mathbb A}^n}.}
 \]
 \item For every point $\z \eps \rmZ$, there exists a commutative square 
 \[\xymatrix{{\rmV_{\z}} \ar@<1ex>[r] \ar@<1ex>[d]^{q_{\rmV_{\z}}} & {\rmU_{\z}} \ar@<1ex>[d]^{q_{\z}}\\
              {(\rmU_{\z} \cap \rmZ) \times {\mathbb A}^c} \ar@<1ex>[r]^{p_{\z}} & {{\mathbb A}^n}}
 \]
 so that $\rmV_{\z} {\underset {(\rmU_{\z} \cap \rmZ) \times {\mathbb A}^c} \times} ((\rmU_{\z} \cap \rmZ)\times \{0\}) \cong (\rmU_{\z} \cap \rmZ)$ and $\rmV_{\z}{\underset {\rmU_{\z}} \times}(\rmU_{\z} \cap \rmZ) \cong \rmU_{\z} \cap \rmZ$,
 that is, $\rmV_{\z}$ is a Nisnevich neighborhood of $(\rmU_{\z} \cap \rmZ) \times \{0\}$ in $(\rmU_{\z}\cap \rmZ)\times {\mathbb A}^c$ and that $\rmV_{\z}$ is a Nisnevich neighborhood of 
 $\rmU_{\z} \cap \rmZ$  in $\rmU_{\z}$.
 \end{enumerate}
\end{lemma}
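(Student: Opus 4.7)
The plan is to treat this as a standard local structure result for a regular closed immersion of smooth schemes: part (i) is classical, and part (ii) follows from part (i) by a fiber-product construction over ${\mathbb A}^n$.

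For part (i), I would start from the observation that since $i:\rmZ \hookrightarrow \rmX$ is a closed immersion of smooth schemes of pure codimension $c$, it is a regular closed immersion. Consequently, the ideal sheaf of $\rmZ$ inside $\O_{\rmX,\z}$ is generated by a regular sequence $f_1,\ldots,f_c$ whose differentials are linearly independent in the cotangent space $\mathfrak{m}_{\rmX,\z}/\mathfrak{m}_{\rmX,\z}^2$. Using the smoothness of $\rmZ$ at $\z$, choose $g_1,\ldots,g_{n-c} \in \O_{\rmX,\z}$ whose restrictions to $\O_{\rmZ,\z}$ form a regular system of parameters. Then $(g_1,\ldots,g_{n-c},f_1,\ldots,f_c)$ is a regular system of parameters at $\z$ in $\O_{\rmX,\z}$, and on a sufficiently small affine Zariski neighborhood $\rmU_{\z}$ of $\z$ on which these functions are regular, the morphism $q_{\z} = (g_1,\ldots,g_{n-c},f_1,\ldots,f_c):\rmU_{\z} \to {\mathbb A}^n$ induces an isomorphism on cotangent spaces at $\z$. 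By the standard characterization of smoothness, $q_{\z}$ is étale at $\z$, and after further shrinking $\rmU_{\z}$ we may assume it is étale on all of $\rmU_{\z}$. The cartesian square is then immediate, since $\rmU_{\z} \cap \rmZ$ is cut out inside $\rmU_{\z}$ by $f_1 = \cdots = f_c = 0$, and $q'_{\z}: \rmU_{\z} \cap \rmZ \to {\mathbb A}^{n-c}$ is the induced map $(g_1|_{\rmU_{\z}\cap \rmZ},\ldots,g_{n-c}|_{\rmU_{\z}\cap \rmZ})$.

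For part (ii), I would define $\rmV_{\z} := \rmU_{\z} \times_{{\mathbb A}^n} ((\rmU_{\z} \cap \rmZ) \times {\mathbb A}^c)$, using the map $q_{\z}$ from (i) on one side and the map $q'_{\z} \times \mathrm{id}_{{\mathbb A}^c}$ on the other. Since $q'_{\z}$ is étale (being the base change of $q_{\z}$ along ${\mathbb A}^{n-c} \hookrightarrow {\mathbb A}^n$), the product $q'_{\z} \times \mathrm{id}_{{\mathbb A}^c}$ is also étale, and hence both structure maps $\rmV_{\z} \to \rmU_{\z}$ and $\rmV_{\z} \to (\rmU_{\z} \cap \rmZ) \times {\mathbb A}^c$ are étale by base change. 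One checks directly that $u \mapsto (u,(u,0))$ defines a section $s: \rmU_{\z} \cap \rmZ \to \rmV_{\z}$, using that $q_{\z}(u) = (q'_{\z}(u),0) = (q'_{\z} \times \mathrm{id})(u,0)$ for every $u \in \rmU_{\z} \cap \rmZ$.

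The main technical obstacle, and the step where the most care is required, is arranging both Nisnevich conditions on a single Zariski open of $\rmV_{\z}$ simultaneously. Since $s$ is a section of the étale morphism $\rmV_{\z} \times_{\rmU_{\z}} (\rmU_{\z} \cap \rmZ) \to \rmU_{\z} \cap \rmZ$, its image is both open and closed in this pullback, so the pullback splits as $s(\rmU_{\z} \cap \rmZ) \sqcup W_1$ for some closed complement $W_1$, whose closure in $\rmV_{\z}$ can be removed to produce a Zariski open on which the first Nisnevich condition holds. An identical argument applied to the other structure map yields a second closed complement $W_2$ to remove. Because $s(\rmU_{\z}\cap \rmZ)$ is disjoint from the closures of both $W_1$ and $W_2$, intersecting the two Zariski opens produces the required $\rmV_{\z}$ satisfying both conditions.
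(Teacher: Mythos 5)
Your proof is correct and follows essentially the same route as the paper: for (i) you spell out the local-coordinates construction that the paper obtains by citing \cite[IV, 17.12.2]{EGA}, and for (ii) you form the same fiber product $\rmV'_{\z}=\rmU_{\z}\times_{{\mathbb A}^n}((\rmU_{\z}\cap\rmZ)\times{\mathbb A}^c)$ and remove a closed complement of the image of the diagonal section. The only difference is a small economy in the paper: it observes that the two relevant fibers $\rmV'_{\z}\times_{\rmU_{\z}}(\rmU_{\z}\cap\rmZ)$ and $\rmV'_{\z}\times_{(\rmU_{\z}\cap\rmZ)\times{\mathbb A}^c}((\rmU_{\z}\cap\rmZ)\times\{0\})$ are literally the same closed subscheme $\rmW'_{\z}\cong(\rmU_{\z}\cap\rmZ)\times_{{\mathbb A}^{n-c}}(\rmU_{\z}\cap\rmZ)$, so a single closed set $\rmZ_{\z}=\rmW'_{\z}-\Delta(\rmU_{\z}\cap\rmZ)$ needs to be removed, whereas you remove two and intersect, which works but requires the extra (true) remark that $s(\rmU_{\z}\cap\rmZ)$ misses both closed complements.
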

\begin{proof}
For each $y \eps \rmZ$, one knows by \cite[IV.4, 17.12.2]{EGA}
 that there exists a Zariski open neighborhood $\rmU_{\z}$ of $y$ in $\rmX$ and an \'etale map $q_{\z}: \rmU_{\z} \ra {\mathbb A}^n$
so that one obtains the first cartesian square in the lemma.
\vskip .1cm
 Let $p_{\z}= q'_{\z} \times id: (\rmU_{\z} \cap \rmZ) \times {\mathbb A}^c \ra {\mathbb A}^{n-c} \times {\mathbb A}^c = {\mathbb A}^n$ denote 
 the induced map. Let $\rmV'_{\z}$ be defined by the cartesian square:
 \[\xymatrix{{\rmV'_{\z}} \ar@<1ex>[r] \ar@<1ex>[d]^{q'_{U_{\z}}} & {\rmU_{\z}} \ar@<1ex>[d]^{q_{\z}}\\
              {(\rmU_{\z} \cap \rmZ) \times {\mathbb A}^c} \ar@<1ex>[r]^{p_{\z}} & {{\mathbb A}^n},}
 \]
that is, $\rmV'_{\z} = \rmU_{\z}{\underset {{\mathbb A}^n} \times} ((\rmU_{\z} \cap \rmZ) \times {\mathbb A}^c)$. Now one may observe the commutative diagram 
\[\xymatrix{{(\rmU_{\z} \cap \rmZ ){\underset {{\mathbb A}^{n-c}} \times}(\rmU _{\z} \cap  \rmZ)} \ar@<1ex>[r] \ar@<1ex>[d]& {(\rmU_{\z} \cap \rmZ)} \ar@<1ex>[r]^i \ar@<1ex>[d]^{q'_{\z}} & {\rmU_{\z}} \ar@<1ex>[d]^{q_{\z}}\\
            {(\rmU_{\z} \cap \rmZ)} \ar@<1ex>[r]^{q'_{\z}} & {{\mathbb A}^{n-c}} \ar@<1ex>[r] &{{\mathbb A}^n}}
\]
where both the squares are cartesian, which provides the isomorphism: $(\rmU_{\z} \cap \rmZ){\underset {{\mathbb A}^{n-c}} \times} (\rmU_{\z} \cap \rmZ) \cong \rmU_{\z} {\underset {{\mathbb A}^n} \times} (\rmU_{\z} \cap \rmZ)$. We call this scheme
 $\rmW'_{\z}$. Then one observes the isomorphisms:
\be \begin{align}
\label{key.ident.deform.norm.cone}
\rmV'_{\z} {\underset {(\rmU_{\z} \cap \rmZ) \times {{\mathbb A}^c}} \times}((\rmU \cap  \rmZ) \times \{0\}) &\cong \rmU_{\z} {\underset {{\mathbb A}^n} \times} ((\rmU_{\z} \cap \rmZ )\times {{\mathbb A}^c}) {\underset {(\rmU_{\z} \cap \rmZ) \times {{\mathbb A}^c}} \times} ((\rmU_{\z} \cap \rmZ)\times \{0\}) \\
\cong \rmU_{\z} {\underset {{\mathbb A}^n} \times}((\rmU_{\z} \cap  \rmZ)\times \{0\}) &= (\rmU_{\z} \cap \rmZ){\underset {{\mathbb A}^{n-c}} \times}(\rmU_{\z} \cap  \rmZ) =\rmW'_{\z} \notag.
\end{align} \ee
Next one observes that the map $q'_{\z}: (\rmU_{\z} \cap \rmZ) \ra {\mathbb A}^{n-c}$ is
\'etale, which implies the diagonal map $\Delta: (\rmU_{\z} \cap \rmZ) \ra (\rmU_{\z} \cap \rmZ){\underset {{\mathbb A}^{n-c}} \times} (\rmU_{\z} \cap \rmZ)$ is an open immersion. 
One may observe that 
$(\rmU_{\z} \cap \rmZ){\underset {{\mathbb A}^{n-c}} \times} (\rmU_{\z} \cap \rmZ) \times\{0\}$ is closed in $\rmV'_{\z}$.
Let $\rmZ_{\z}$ denote
$ (\rmU_{\z} \cap \rmZ){\underset {{\mathbb A}^{n-c}} \times} (\rmU_{\z} \cap \rmZ) - \Delta (\rmU_{\z} \cap \rmZ)$, which is therefore closed in $\rmV'_{\z}$ and $\rmW'_{\z}$. Let $\rmV_{\z} = \rmV'_{\z} - \rmZ_{\z}$ and $\rmU_{\z} \cap \rmZ = \rmW'_{\z} -\rmZ_{\z}$. 
Then one may see that, with the above choice of $\rmV_{\z}$, one obtains the commutative square in (ii).
 \end{proof}

\section{\bf Applications of the Additivity (and Multiplicativity) of the transfer}
\vskip .3cm
Classically, several of the applications of the transfer, such as various double coset formulae for actions of compact Lie groups were first obtained by special
arguments, such as in \cite{Fesh}. The work \cite{LMS} showed that all such results could be deduced by proving the additivity of
the transfer.  In fact, the full strength of the Becker-Gottlieb transfer and its full range of applications stem from the additivity
of the pre-transfer, the transfer and the trace. 
\vskip .2cm
In the present section, making use of the motivic and \'etale Becker-Gottlieb transfer constructed in \cite{CJ23-T1}, and with the additivity for the 
transfer and trace established in section 4, we carry out a similar
program in the motivic and \'etale framework. The analogue of the statement that the Euler characteristic of $\rmG/\NT$ is $1$
in singular cohomology for compact Lie groups is a conjecture due to Morel, that a suitable motivic Euler characteristic
in the Grothendieck-Witt group is $1$, for $\rmG/\NT$, where $\rmG$ is a split connected reductive group and $\NT$ is the 
normalizer of a maximal torus in $\rmG$. As pointed out above, this was proven in \cite[Theorem 1.2]{JP23}, under the 
hypothesis that the base field has a square root of $-1$, by deducing it from
the {\it additivity of the motivic trace} and then proving such an additivity theorem for the motivic trace.
\vskip .2cm
\subsection{The generic torus slice theorem and applications}
 We begin by discussing the following Proposition, which seems to be rather well-known. (See for example, \cite[Proposition 4.10]{Th86} or
 \cite[(3.6)]{BP}.)
\begin{proposition} \index{generic torus slice theorem}
 \label{struct.T.actions}
Let $\rmT$ denote a split torus acting on a smooth scheme $\rmX$ all defined over the given perfect base field $k$. 
\vskip .1cm
Then the following hold.
\vskip .1cm
$\rmX$ admits a  decomposition into a disjoint union of finitely many locally closed, $\rmT$-stable subschemes $\rmX_j$ so that
 \be \begin{equation}
    \label{torus.decomp}
    \rmX_j \cong (\rmT/\Gamma _j) \times \rmY_j.
 \end{equation} \ee
 \vskip .2cm \noindent
 Here each $\Gamma_j$ is a sub-group-scheme of $\rmT$, each $\rmY_j$ is a scheme of finite type over $k$ which is also regular and on which $\rmT$ acts trivially with the isomorphism in ~\eqref{torus.decomp} being $\rmT$-equivariant. 
\end{proposition}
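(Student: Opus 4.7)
The plan is to produce the decomposition by stratifying $\rmX$ according to the stabilizer subgroup-scheme of the $\rmT$-action, and then trivializing the resulting torsors.

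First I would observe that since $\rmT$ is a split torus, for any sub-group-scheme $\Gamma \subseteq \rmT$ the fixed locus $\rmX^{\Gamma}$ is a closed smooth subscheme of $\rmX$ (this follows from Iversen/Fogarty-style arguments for diagonalizable group actions on smooth schemes, which work over any perfect base). As $\Gamma$ varies, the closed subschemes $\rmX^{\Gamma}$ form a collection that is, up to coincidence, controlled by the characters of $\rmT$ appearing in the tangent spaces at closed points; in particular only finitely many distinct fixed loci appear. For each such $\Gamma$, define
\[
 \rmX_{(\Gamma)} = \rmX^{\Gamma} \, \setminus \, \bigcup_{\Gamma' \supsetneq \Gamma}\rmX^{\Gamma'},
\]
which is a locally closed, $\rmT$-stable, smooth subscheme of $\rmX$ on which every geometric point has stabilizer exactly $\Gamma$. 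Finiteness of the collection $\{\Gamma\}$ that actually arise ensures that these strata $\rmX_j = \rmX_{(\Gamma_j)}$ give the desired finite locally-closed decomposition.

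Next I would verify that the $\rmT$-action on $\rmX_j$ factors through $\rmT/\Gamma_j$ and is free in the scheme-theoretic sense (the stabilizer group-scheme is trivial at every geometric point, and by the smoothness of $\rmX_j$ one may upgrade set-theoretic freeness to scheme-theoretic freeness). Then by the existence of geometric quotients for free actions of affine group schemes on quasi-projective schemes (invoking Sumihiro's theorem to reduce to the affine situation by covering $\rmX$ by $\rmT$-stable affine opens, and then gluing), the quotient $\rmY_j = \rmX_j/(\rmT/\Gamma_j)$ exists as a scheme of finite type over $k$, the quotient map $\rmX_j \to \rmY_j$ is a $(\rmT/\Gamma_j)$-torsor, and $\rmY_j$ inherits regularity from $\rmX_j$ (since the torsor projection is faithfully flat and smooth). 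The group $\rmT/\Gamma_j$ is again a diagonalizable group over $k$, hence special in the sense of Grothendieck, so any $(\rmT/\Gamma_j)$-torsor is Zariski locally trivial; combined with the fact that torsors under split tori over a regular base are classified by the Picard group in a controlled way, one obtains a $\rmT$-equivariant isomorphism $\rmX_j \cong (\rmT/\Gamma_j) \times \rmY_j$.

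The main obstacle I expect is not the stratification itself (which is formal once one admits that fixed loci of diagonalizable actions on smooth schemes are smooth and that only finitely many distinct stabilizer types appear), but rather the global triviality of the $(\rmT/\Gamma_j)$-torsor $\rmX_j \to \rmY_j$. Speciality of $\rmT/\Gamma_j$ ensures local triviality in the Zariski topology, but to get a genuine splitting one must either argue that $\mathrm{Pic}$ of the relevant $\rmY_j$ has no obstruction, or replace the decomposition by a further finite stratification of each $\rmY_j$ until the torsors become globally trivial. Since Theorems \ref{add.trace}, \ref{add.transf} and their corollaries only require a finite decomposition into locally closed smooth $\rmT$-stable pieces of the stated product form, the latter route suffices: refining each $\rmY_j$ by a finite stratification on which the torsor trivializes produces the decomposition claimed in the proposition. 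As the authors note, this is essentially Thomason's \cite[Proposition 4.10]{Th86}, and our argument is a streamlined adaptation of his proof.
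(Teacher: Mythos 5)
Your proposal is correct in spirit but takes a genuinely different and, I think, more labored route than the paper's. The paper simply applies Thomason's generic torus slice theorem \cite[Proposition 4.10]{Th86}, which directly produces a dense $\rmT$-stable regular open $\rmU$ together with a $\rmT$-equivariant isomorphism $\rmU \cong \rmT/\Gamma \times \rmU/\rmT$, and then one finishes by Noetherian induction on the closed complement (passing to its regular locus, which is dense since $k$ is perfect). Your proof instead begins with a global stratification by scheme-theoretic stabilizer and then forms quotients. The catch, which you do notice, is that this stratification alone does not yield products: the $(\rmT/\Gamma_j)$-torsor $\rmX_j \to \rmY_j$ can fail to be globally trivial even though $\rmT/\Gamma_j$ is special. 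A concrete example: $\mathbb{G}_m$ acting on $\rmX=\mathbb{A}^2$ with weights $(1,-1)$; the open stratum $\mathbb{A}^2\setminus\{0\}$ has quotient $\rmY$ equal to $\mathbb{A}^1$ with a doubled origin, and the resulting $\mathbb{G}_m$-torsor cannot be trivial, since $\mathbb{A}^2\setminus\{0\}$ is separated whereas $\mathbb{G}_m\times\rmY$ is not. You therefore must perform a second, further stratification of each $\rmY_j$ to trivialize the torsor. But that second stratification is precisely the same Noetherian induction that Thomason's theorem already packages for you, so the stabilizer stratification is an added layer that does not actually simplify the argument. In that sense describing your argument as a ``streamlined adaptation'' of Thomason's proof is misleading: Thomason works at the generic point and spreads out, not by global stabilizer stratification, and his route reaches the conclusion in one Noetherian induction rather than two.

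Two smaller points worth tightening. First, the step ``by the smoothness of $\rmX_j$ one may upgrade set-theoretic freeness to scheme-theoretic freeness'' is not really about smoothness of $\rmX_j$: in characteristic $p$ one can have an infinitesimal $\mu_p$-type stabilizer that is invisible on geometric points, so if you stratify only by the geometric stabilizer you may not in fact have a scheme-theoretically free action on the open stratum. The fix is to stratify by the full scheme-theoretic stabilizer from the start (which is what you should mean by ``stabilizer subgroup-scheme''); this is fine for a split torus since every such stabilizer is diagonalizable and defined over $k$. Second, the existence of the geometric quotient $\rmY_j$ as a scheme relies on Sumihiro's covering by $\rmT$-stable affines and gluing; the result may be non-separated (as the example above shows), which is harmless here since the proposition does not assert separatedness of $\rmY_j$, but you should say so explicitly rather than implicitly assume the quotient is well behaved.
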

\begin{proof}
One may derive this from the generic torus slice theorem proved in \cite[Proposition 4.10]{Th86}, which says that if a split torus acts on a reduced separated  scheme of finite type over a perfect field, then the following are satisfied:
 \begin{enumerate}
   \item there is an open subscheme $\rmU$ which is regular and stable under the $\rmT$-action
   \item a geometric quotient $\rmU/\rmT$ exists, which is a regular scheme of finite type over $k$
   \item $\rmU $ is isomorphic as a $\rmT$-scheme to $\rmT/\Gamma \times \rmU/\rmT$ where $\Gamma$ is a diagonalizable subgroup scheme of $\rmT$ and $\rmT$ acts trivially on $\rmU/\rmT$. 
 \end{enumerate}
(See also \cite[(3.6)]{BP} for a similar decomposition.) 
\end{proof}
Next we consider the following theorem.
 \begin{theorem} \index{motivic trace: torus actions}
 \label{torus.act} 
 We will assume throughout this theorem that the base field $\k$ is infinite and contains a $\sqrt{-1}$. Under the assumption that the base field $k$ is of characteristic $0$,  the following hold, where $\tau_{\rmX_+}$ denotes the trace associated to the scheme $\rmX$:
  \begin{enumerate}[\rm(i)]
      \item $\tau_{{{\mathbb G}_{\it m}}_+}=0$ in the Grothendieck-Witt ring of the base field $k$. More generally, if $\rmT$ is a split torus, $\tau_{\rmT_+}=0$  in the Grothendieck-Witt ring of $k$.
      \item Let $\rmT$ denote a split torus acting on a smooth scheme $\rmX$. Then $\rmX^{\rmT}$ is also 
            smooth, and $\tau_{\rmX_+}= \tau_{\rmX^{\rmT}_+}$ in the Grothendieck-Witt ring of $k$. 
  \end{enumerate}      
 \vskip .1cm \noindent
    If the base field is of positive characteristic, the corresponding assertions hold
      with the Grothendieck-Witt ring of $k$ replaced by the Grothendieck-Witt ring of $k$ with the prime $p$ inverted.
  \vskip .2cm \noindent  
  Assume $\rmM$ denotes a motivic spectrum that has the rigidity property as in Definition ~\ref{rigid.prop} and that $\rmT = {\mathbb G}_{\it m}$. Then if ${\rm RHom}(\quad, \rmM)$ denotes the derived (external) hom in the category of spectra, 
   \begin{enumerate}
  \item [\rm (iii)]  ${\rm RHom}(j \circ \tr_{{{\mathbb G}_{\it m}}_+}', \rmM)$ is trivial, where $j: {\mathbb G}_{\it m} \ra {\mathbb A}^1$ in the open immersion.
  \item [\rm (iv)] Let $\rmT$ act on a smooth scheme $\rmX$ so that for each $\rmT$-orbit $\rmT/{\rm \Gamma}_j$  with the orbit $\rmT/{\rm \Gamma}_j \cong {\mathbb G}_{\it m}$, the locally closed immersion
$\rmT/{\rm \Gamma}_j \times \rmY_j \ra \rmX$ as in ~\eqref{torus.decomp}  factors through a map ${\mathbb A}^1 \times \rmY_j \ra \rmX$. Then
   ${\rm RHom}(\tr_{\rmX_+}', \rmM)  \simeq {\rm RHom}({\it i} \circ \tr_{\rmX^{\rmT}_+}', \rmM)$, where $i: \rmX^{\rmT} \ra \rmX$ is the inclusion.
\item[\rm (v)] Moreover, under the assumptions of {\rm (iv)},  if $\rmG$ is a linear algebraic group that is special and acting on the scheme $\rmX$ commuting with the action of a split torus $\rmT$ so that the decomposition of $\rmX$
in ~\eqref{torus.decomp} is $\rmG$-stable, and $\rmE \ra \rmB$ is 
a $\rmG$-torsor, 
 then $tr_{\rmX}^*= {\rm RHom}(\rmE{\underset {\rmG} \times}\tr_{\rmX}'^{\rmG}, \rmM) \simeq  tr_{\rmX^{{\mathbb G}_{\it m}}}^* \circ {\it i}^* ={\rm RHom}(i \circ (\rmE{\underset {\rmG} \times}\tr_{\rmX^{\rmT}}'^{\rmG}), \rmM)$, where ${\it i}: \rmE{\underset {\rmG} \times}\rmX^{\rmT} \ra \rmE{\underset {\rmG} \times}\rmX$ is the inclusion.
   \end{enumerate}
 \end{theorem}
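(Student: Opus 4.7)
For (i), I apply Theorem~\ref{add.trace}(ii) to the decomposition ${\mathbb A}^1 = {\mathbb G}_m \sqcup \{0\}$: since ${\mathbb A}^1$ is ${\mathbb A}^1$-contractible one has $\tau_{{\mathbb A}^1}=1$, while purity identifies $\tau_{{\mathbb A}^1/{\mathbb G}_m}=\tau_{\{0\}}=1$ (the identification of $\tau_{\mathbf T}$ with $1$ in ${\rm GW}(k)$ crucially uses $\sqrt{-1}\in k$, exactly as in the proof of Theorem~\ref{add.trace}(ii)); hence $\tau_{{\mathbb G}_m}=0$. For a general split torus $\rmT\cong{\mathbb G}_m^n$ with $n\geq 1$, the multiplicativity of Proposition~\ref{mult.prop} yields $\tau_{\rmT}=(\tau_{{\mathbb G}_m})^n=0$. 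For (ii), smoothness of $\rmX^{\rmT}$ is classical (Fogarty--Iversen). Using the stratification $\rmX=\sqcup_j(\rmT/\Gamma_j)\times\rmY_j$ of Proposition~\ref{struct.T.actions}, additivity of the trace (Theorem~\ref{add.trace}(iii)) combined with multiplicativity gives $\tau_{\rmX}=\sum_j\tau_{\rmT/\Gamma_j}\cdot\tau_{\rmY_j}$; each $\rmT/\Gamma_j$ is either a point (when $\Gamma_j=\rmT$) or a split torus of positive dimension (whose trace vanishes by (i)), so only fixed strata survive, and a second application of additivity to $\rmX^{\rmT}=\sqcup_{\Gamma_j=\rmT}\rmY_j$ identifies the remaining sum with $\tau_{\rmX^{\rmT}}$.

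For (iii), I apply Theorem~\ref{add.transf}(ii) and the purity statement (iii) of the same theorem (valid because $\rmM$ is rigid) to ${\mathbb A}^1 = {\mathbb G}_m \sqcup \{0\}$, obtaining
\[
{\rm RHom}(j\circ \tr'_{{\mathbb G}_m},\rmM) \;=\; {\rm RHom}(\tr'_{{\mathbb A}^1},\rmM) \;-\; {\rm RHom}(i_0\circ \tr'_{\{0\}},\rmM).
\]
Since $\rmM$ is ${\mathbb A}^1$-invariant, the projection $p:{\mathbb A}^1\to{\rm Spec}\,k$ induces an equivalence $p^*:{\rm RHom}(\mbS,\rmM)\simeq{\rm RHom}(\mbS\wedge{\mathbb A}^1_+,\rmM)$, under which both terms on the right correspond to multiplication by $\tau_{{\mathbb A}^1}=1=\tau_{\{0\}}$, so their difference is null.

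For (iv), take the stratification from Proposition~\ref{struct.T.actions} with $\rmT={\mathbb G}_m$ and apply Theorem~\ref{add.transf}(iv) to write $\tr_{\rmX}=\sum_j i_j\circ \tr_{\rmX_j}$. The fixed-point strata (those with $\Gamma_j=\rmT$) assemble, via a second application of additivity to $\rmX^{\rmT}$, into $i\circ \tr_{\rmX^{\rmT}}$. For a stratum with $\rmT/\Gamma_j={\mathbb G}_m$ the factorization hypothesis produces $q_j:{\mathbb A}^1\times\rmY_j\to\rmX$ with $i_j=q_j\circ(j\times{\rm id})$. Applying Theorem~\ref{add.transf}(ii)--(iii) to the stratification ${\mathbb A}^1\times\rmY_j=({\mathbb G}_m\times\rmY_j)\sqcup(\{0\}\times\rmY_j)$ and composing with $q_j$ yields
\[
{\rm RHom}(i_j\circ \tr'_{\rmX_j},\rmM) \;=\; {\rm RHom}(q_j\circ \tr'_{{\mathbb A}^1\times\rmY_j},\rmM) \;-\; {\rm RHom}(s_0\circ \tr'_{\rmY_j},\rmM),
\]
where $s_0:=q_j\circ(i_0\times{\rm id})$. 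The crucial step is showing these two terms coincide: multiplicativity (Proposition~\ref{mult.prop}) together with $\tau_{{\mathbb A}^1}=1$ gives $p\circ \tr'_{{\mathbb A}^1\times\rmY_j}=\tr'_{\rmY_j}$ for $p:{\mathbb A}^1\times\rmY_j\to\rmY_j$ the projection, while the standard ${\mathbb A}^1$-homotopy between ${\rm id}_{{\mathbb A}^1\times\rmY_j}$ and $(i_0\times{\rm id})\circ p$ gives $q_j\sim s_0\circ p$ in $\bH\Spt_{mot}$; combining these forces the two terms to agree, so ${\rm RHom}(i_j\circ \tr'_{\rmX_j},\rmM)=0$. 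For (v), I run the same argument $\rmG$-equivariantly via the Borel construction $\rmE\times_{\rmG}(-)$: the $\rmG$-stability of the stratification, the fact that $\rmG$ commutes with ${\mathbb G}_m$, and the trivial $\rmG$-action on ${\mathbb A}^1$, ${\mathbb G}_m$, the projection $p$, and the ${\mathbb A}^1$-homotopy together ensure that each ingredient transports intact through the Borel construction.

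The hard part is the ${\mathbb G}_m$-orbit cancellation in (iv): it requires the simultaneous interplay of four ingredients---additivity of the transfer, purity (which invokes the rigidity hypothesis on $\rmM$), multiplicativity of the pre-transfer, and ${\mathbb A}^1$-homotopy invariance---to realize each non-fixed stratum's contribution as a telescoping difference of two maps that coincide only after all of these are brought to bear together.
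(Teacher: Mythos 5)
Your proposal is correct and follows essentially the same strategy as the paper: the decomposition ${\mathbb A}^1 = {\mathbb G}_m \sqcup \{0\}$ plus ${\mathbb A}^1$-contractibility for (i) and (iii), multiplicativity of the (pre-)transfer for tori and product strata, the stratification of Proposition~\ref{struct.T.actions}, and the additivity theorems~\ref{add.trace} and~\ref{add.transf} for (ii) and (iv), with the Borel construction transporting the argument to the equivariant setting in (v).

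The one place where your writeup genuinely adds value is the vanishing of the non-fixed strata in (iv). The paper dispatches this step in a single sentence (``statements (i) and (iii) $\ldots$ along with the assumptions in (iv) prove that the $j$-th summand $\ldots$ are trivial unless $\Gamma_j = \rmT$''), leaving implicit the interplay between the factorization through ${\mathbb A}^1 \times \rmY_j$, multiplicativity, and ${\mathbb A}^1$-homotopy invariance. Your telescoping computation---writing ${\rm RHom}(i_j\circ\tr'_{\rmX_j},\rmM)$ as a difference ${\rm RHom}(q_j\circ\tr'_{{\mathbb A}^1\times\rmY_j},\rmM) - {\rm RHom}(s_0\circ\tr'_{\rmY_j},\rmM)$ via Theorem~\ref{add.transf}(ii)--(iii) applied intrinsically to $({\mathbb A}^1\times\rmY_j,\ {\mathbb G}_m\times\rmY_j)$, and then showing these two terms agree using $p\circ\tr'_{{\mathbb A}^1\times\rmY_j}=\tr'_{\rmY_j}$ from multiplicativity plus the ${\mathbb A}^1$-homotopy $q_j\sim s_0\circ p$---is a clean and explicit justification of what the paper asserts tersely. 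It also correctly isolates that this step, rather than merely invoking (iii) (which kills $j\circ\tr'_{{\mathbb G}_m}$ but not \emph{a priori} its smash with $\tr'_{\rmY_j}$), is what actually needs an argument. The remaining parts of your proof, including the observation in (iii) that $p^*$ identifies both terms with the identity self-map of ${\rm RHom}(\mbS,\rmM)$, are minor rephrasings of the paper's reasoning and equally valid.
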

 \begin{proof} First observe from Definition ~\ref{pretransfer.3}, that the trace $\tau_{\rmX}$ associated to any smooth scheme $\rmX$ is a map
 $\mbS_{\k} \ra \mbS_{\k}$: as such, we will identify $\tau_{\rmX}$ with the corresponding class $\tau_{\rmX}^*(1)$ in the Grothendieck Witt-ring of the base field.
 We will only consider the proofs when the base field is of characteristic $0$, since the proofs in the positive characteristic case are entirely similar. However, it is important to point out that in positive characteristics $p$, it is important to invert $p$: for otherwise, one no longer has a theory of Spanier-Whitehead duality.
 \vskip .1cm 	
 (i) and (iii). We observe that the scheme ${\mathbb A}^1$ is the disjoint union of 
 the closed point $\{0\}$ and ${\mathbb G}_{\it m}$. If $i_1: \{0\} \ra {\mathbb A}^1$ and $j_1: {\mathbb G}_{\it m} \ra {\mathbb A}^1$ are the corresponding immersions, \cite[Theorem 2.9(ii) and (iii)]{JP23} and Theorem ~\ref{add.transf}(ii) and (iii)
  show that
 	\be\begin{equation}
 	  \label{torus.1}
 	   \tau_{{\mathbb A}^1_+} = \tau_{\{0\}_+} +  \tau_{{{\mathbb G}_{\it m}}_+} \mbox { and } {\rm RHom}(\tr_{{\mathbb A}^{\rm 1}_+}', \rmM) = {\rm RHom}(i_{\rm 1} \circ \tr_{\{{\rm 0 }_+\}}', \rmM) +   {\rm RHom}(j_{\rm 1} \circ tr_{{{\mathbb G}_{\it m}}_+}', \rmM).
 	\end{equation} \ee
 \vskip .1cm \noindent
 However, by ${\mathbb A}^1$-contractibility, $\tau_{{\mathbb A}^1_+} = \tau_{\{0\}_+}$ and $\tr_{{\mathbb A}^{\rm 1}_+}' = i_{\rm 1} \circ tr_{\{{\rm 0 }\}_+}'$. One may readily see this from the
 definition of the pre-transfer as in  Definition ~\ref{pretransfer.3}, which shows that both the
 pre-transfer $\tr_{\rmC_+}' = \tr_{\rmC_+}'(id)$ and hence the corresponding trace, $\tau_{\rmC_+} $ depend on $\rmC_+$ only up to its class in the motivic stable homotopy category.
 Therefore, $\tau_{{{\mathbb G}_{\it m}}_+}=0$ and ${\rm RHom}(j_1 \circ \tr_{{{\mathbb G}_{\it m}}_+}', \rmM)$ is trivial.
  Since $\rmT$ is a split torus, we may assume $\rmT = {\mathbb G}_{\it m} ^n$ for some 
 positive integer $n$. Then the multiplicative property of the trace and pre-transfer (see Proposition ~\ref{mult.prop}) prove that
  $\tau_{\rmT_+}=0$. These complete the proof of statements (i) and (iii).
 \vskip .2cm
 Therefore, we proceed to prove the statement in (ii) and (iv). First, we invoke Proposition ~\ref{struct.T.actions}
to conclude that $\rmX^{\rmT}$ is the disjoint union of the schemes $\rmX_j$ for which $\Gamma _j = \rmT$. 
 \vskip .1cm
 Let $i_j: \rmX_j \cong (\rmT/\Gamma _j) \times \rmY_j \ra \rmX$ denote the locally closed immersion.
 Next observe that the additivity of the trace proven as in  \cite[Theorem 2.9(ii) and (iii)]{JP23}, the additivity of pre-transfer proven in Theorem ~\ref{add.transf},  and the multiplicativity of the pre-transfer and trace proven in  
  \cite[Proposition 2.8]{JP23} along with the decomposition in  ~\eqref{torus.decomp} show that
 \be \begin{align}
    \label{tr.T}
    \tau_{\rmX_+} = {\Sigma} _j \tau_{\rmX_{j+}} &= {\Sigma} _j  (\tau_{\rmT/\Gamma_j+}) \wedge \tau_{\rmY_{j+}} \mbox{ and }\\
    {\rm RHom}(\tr_{\rmX_+}', \rmM)  \simeq {\Sigma} _j {\rm RHom}(i_j \circ tr_{\rmX_{j+}}', \rmM) &= {\Sigma} _j {\rm RHom}(i_j \circ (\tr_{\rmT/\Gamma_j+}'\wedge \tr_{\rmY_{j+}}'), \rmM). \notag
 \end{align} \ee
 \vskip .2cm
 Now statements (i) and (iii) in the theorem along with the assumptions in (iv) prove that the $j$-th summand on the right-hand-sides are trivial unless $\Gamma _j = \rmT$. But, then $\rmX^{\rmT}$ is the disjoint union of such $\rmX_j$. Finally the additivity of the trace and pre-transfer in 
 \cite[Theorem 2.9(ii) and (iii)]{JP23} and Theorem ~\ref{add.transf} applied once more to $\rmX^{\rmT}$ proves the sum of the non-trivial terms on the right-hand-side is 
 $\tau_{\rmX^{\rmT}}$ for the first equation and is given by ${\rm RHom}(i \circ \tr_{\rmX^{\rmT}_+}', \rmM)$ for the second equation.
 These prove the statements in (ii) and (iv).
 \vskip .1cm
 Finally, we consider the last statement. In view of the assumption that the actions by the linear
 algebraic group $\rmG$ and the split torus $\rmT$ on the scheme $\rmX$ commute and that the
  decomposition of $\rmX$ into the schemes $\rmX_j$ as in ~\eqref{torus.decomp} is stable by the action of
  $\rmG$, the weak-equivalence ${\rm RHom}(\tr_{\rmX_+}', \rmM) \simeq {\rm RHom}(i \circ \tr'_{\rmX^{\rmT}_+}, \rmM)$ obtained in (iv) implies the
   weak-equivalence ${\rm RHom}(\rmE{\underset {\rmG} \times}\tr_{\rmX_+}'^{\rmG}, \rmM) \simeq {\rm RHom}(i \circ (\rmE{\underset {\rmG} \times}\tr_{\rmX^{\rmT}_+}'^{\rmG}), \rmM)$ in (v), in view of Theorem ~\ref{add.transf}(iii) and (iv). (In fact, one may adopt an argument
   as in ~\eqref{EP} using a cover $\U=\{\rmU_i|i\}$ of $\rmB$ on which the torsor $\rmp: \rmE \ra \rmB$ is trivial.)
 \end{proof}
 \begin{remark}
  Here we provide an explanation of the condition in Theorem ~\ref{torus.act}(iv). The first observation is that then the origin in ${\mathbb A}^1$ corresponds
   to a fixed point $x$ for the ${\mathbb G}_{\it m}$-action on $\rmX$. The corresponding ${\mathbb G}_{\it m}$-orbit is then contained in a {\it slice} at $x$.
   The condition in Theorem ~\ref{torus.act}(iv) may now be interpreted as saying the fixed point $x$ is {\it an attractive fixed point} for the
   ${\mathbb G}_{\it m}$-action on $\rmX$, in the sense that all the weights for the induced ${\mathbb G}_{\it m}$ action on the Zariski tangent space $\rmT_{\it x}$ at $x$ lie
   in an open half-space. (See \cite[2.2, 2.3]{BJ} for further details.)
 \end{remark}

 \begin{corollary}
 	\label{equiv.fixed.pts}
 	Let $\rmX$ denote a smooth scheme provided with the action of a linear algebraic group $\rmG$ which we will assume is also special. Assume that 
 	 $\rmX$ is also provided with an action by ${\mathbb G}_{\it m}$ commuting 
 	with the action by $\rmG$ and that the hypotheses in Theorem ~\ref{torus.act}{\rm (v)} hold with $\rmT = {\mathbb G}_{\it m}$.  Let $\rmM$ denote a fibrant motivic spectrum that has the rigidity property. Then,
 	adopting the
 terminology as in \cite[8.3]{CJ23-T1},  that for a linear algebraic group $\rmG$, $\BG = \colimm \BG^{\it gm,m}$ and $\EG = \colimm \EG^{\it gm,m}$,
 	one obtains the homotopy commutative diagram
 	\[\xymatrix{ {\h(\EG{\underset {\rmG} \times}\rmX, \rmM)} \ar@<1ex>[r]^{i^*} \ar@<1ex>[d]^{tr_{\rmX}^*} &  {\h(\EG{\underset {\rmG} \times}\rmX^{{\mathbb G}_{\it m}}, \rmM)}  \ar@<1ex>[dl]^{tr_{\rmX^{{\mathbb G}_{\it m}}}^*} \\
 		{\h(\BG, \rmM)}} \]
 where $\h(\quad, \rmM)$ denotes the hypercohomology spectrum with respect to the motivic spectrum $\rmM$ and $i: \EG{\underset {\rmG} \times}\rmX^{{\mathbb G}_{\it m}} \ra \EG{\underset {\rmG} \times}\rmX$ is the map induced by the closed immersion $\rmX^{{\mathbb G}_{\it m}} \ra \rmX$.		
 \end{corollary}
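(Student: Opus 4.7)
The plan is to deduce this directly from Theorem ~\ref{torus.act}(v) applied to the finite-dimensional approximations of the geometric classifying space. Since $\BG = \colimm \BG^{gm,m}$ and $\EG = \colimm \EG^{gm,m}$, and each $\rmp_m : \EG^{gm,m} \to \BG^{gm,m}$ is a $\rmG$-torsor between smooth quasi-projective schemes, the statement at the level of $\BG$ should reduce to the assertions at each finite level together with a compatibility as $m$ varies.

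First, I would verify at each finite level $m$ that the hypotheses of Theorem ~\ref{torus.act}(v) are satisfied. The ${\mathbb G}_m$-action on $\rmX$ commutes with the $\rmG$-action by assumption, so the Bialynicki-Birula type decomposition $\rmX = \sqcup_j \rmX_j$ from Proposition ~\ref{struct.T.actions} is $\rmG$-stable, and the attractive fixed point hypothesis (the condition on each orbit ${\mathbb G}_m/\Gamma_j \cong {\mathbb G}_m$ factoring through ${\mathbb A}^1 \times \rmY_j$) is inherited from our assumption. Theorem ~\ref{torus.act}(v) then produces, for each $m$, a homotopy commutative diagram
\[\xymatrix{ {\h(\EG^{gm,m}{\underset {\rmG} \times}\rmX, \rmM)} \ar@<1ex>[r]^{i_m^*} \ar@<1ex>[d]^{tr_{\rmX,m}^*} &  {\h(\EG^{gm,m}{\underset {\rmG} \times}\rmX^{{\mathbb G}_m}, \rmM)}  \ar@<1ex>[dl]^{tr_{\rmX^{{\mathbb G}_m},m}^*} \\
		{\h(\BG^{gm,m}, \rmM)}. } \]

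Next, I would verify that these triangles fit together compatibly as $m$ varies. This compatibility comes from Theorem ~\ref{thm.1}(i), which records that the transfers $\tr(\rmf_{\rmY})^m$ are compatible along the maps $\BG^{gm,m} \to \BG^{gm,m+1}$, and from the naturality of the closed immersion $i: \rmX^{{\mathbb G}_m} \to \rmX$ under the Borel construction. Because the pre-transfer construction in Definition ~\ref{pretransfer} is built out of the canonical evaluation/coevaluation maps (which are natural by Proposition ~\ref{equiv.nonequiv.spectra.1} and the surrounding discussion), the identification produced by Theorem ~\ref{torus.act}(v) is itself natural in the torsor, and in particular compatible with the level maps.

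Finally, I would pass to the homotopy inverse limit in $m$: the hypercohomology spectra $\h(\EG{\underset {\rmG} \times}\rmX, \rmM)$, $\h(\EG{\underset {\rmG} \times}\rmX^{{\mathbb G}_m}, \rmM)$ and $\h(\BG, \rmM)$ are $\holimm$ of their level-$m$ counterparts, since $\EG{\underset {\rmG} \times}(-) = \colimm \EG^{gm,m}{\underset {\rmG} \times}(-)$ and $\rmM$ is fibrant. The triangles at each level thus assemble to a triangle at infinity, giving the required homotopy commutative diagram. The main subtlety I expect is purely technical: ensuring that the weak-equivalence in Theorem ~\ref{torus.act}(v) is genuinely natural in the $\rmG$-torsor (so that the compatibility across $m$ is bona fide and not just a levelwise statement), which in turn rests on the naturality of the factorization through the motivic tubular neighborhood established in Proposition ~\ref{diagonal.map} and the compatibility of the rigidity identifications in Corollary ~\ref{coh.tub.nbd} under change of base.
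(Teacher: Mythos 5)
Your proposal is correct and follows essentially the same route as the paper: reduce to the finite-dimensional approximations $\BG^{gm,m}$, $\EG^{gm,m}$, invoke Theorem~\ref{torus.act}(v) there (noting the $\rmG$-stable decomposition $\rmX = (\rmX-\rmX^{{\mathbb G}_m}) \sqcup \rmX^{{\mathbb G}_m}$ and the attractive fixed-point hypothesis), and then pass to the homotopy (co)limit as $m \to \infty$. The paper's proof is somewhat terser on the compatibility across $m$, which you address explicitly, but the underlying argument is the same.
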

 \begin{proof} We will show that there is a corresponding commutative diagram, when $\BG$ and $\EG$ are replaced by their finite
 dimensional approximations $\BG^{\it gm,m}$ and $\EG^{\it gm,m}$. Therefore let $m$ denote a fixed non-negative integer and let
 $\BG^{\it gm,m}$ denote the approximation of $\BG^{gm}$ to degree $m$ and let $\EG^{\it gm,m}$ denote its universal principal $\rmG$-bundle.
 \vskip .1cm
 Next we observe that $\rmX$ admits the decomposition $\rmX = (\rmX-\rmX^{{\mathbb G}_{\it m}}) \sqcup \rmX^{{\mathbb G}_{\it m}}$ and that this
 decomposition is stable under the action of $\rmG$ (as the action of $\rmG$ and ${\mathbb G}_{\it m}$ are assumed to commute). Moreover, 
 $\rmX- \rmX^{{\mathbb G}_{\it m}} \cong {\mathbb G}_{\it m} \times \rmY$, where $\rmY$ in fact denotes the geometric quotient $(\rmX - \rmX^{{\mathbb G}_{\it m}})/{\mathbb G}_{\it m}$.
By Theorem ~\ref{torus.act}(v),
 one obtains the identification of the $\rmG$-equivariant transfers
 \[tr_{\rmX}^*={\rm RHom}(\EG^{\it gm,m}{\underset {\rmG} \times}{\tr'}_{\rmX}^{\rmG}, \rmM) = {\rm RHom}(\EG^{\it gm,m}{\underset {\rmG} \times}({\it i} \circ {\tr'}_{\rmX^{{\mathbb G}_{\it m}}}^{\rmG}), \rmM) ={\it tr}_{\rmX^{{\mathbb G}_{\it m}}}^* \circ {\it i}^*.\]
Finally, taking the homotopy inverse limit as $m \ra \infty$ provides the homotopy commutative triangle in the corollary.
\end{proof}\qed
\vskip .2cm
In the following Proposition, we assume that the scheme $\rmX$ provided with commuting actions by a connected linear algebraic group $\rmG$ and 
a $1$-dimensional torus ${\mathbb G}_{\it m}$ is also {\it projective and smooth}. This enables us to draw the same conclusions as in Corollary
~\ref{equiv.fixed.pts} with less stringent hypotheses.
\begin{proposition}
 \label{fixed.pts.proj.case} \index{transfer: fixed points for torus actions}
 Let $\rmX$ denote a smooth {\it projective} variety over $\k$ provided with the action of a connected linear algebraic group $\rmG$, which we will assume is also special. Assume 
 	 that $\rmX$ is also provided with an action by ${\mathbb G}_{\it m}$ commuting 
 	with the action by $\rmG$. We will further assume that the base field $k$ is infinite and that it contains a $\sqrt{-1}$. Let $\rmM$ denote a motivic spectrum that has the rigidity property. Then,
adopting the
 terminology as in \cite[8.3]{CJ23-T1},  that for a linear algebraic group $\rmG$, $\BG = \colimm \BG^{\it gm,m}$ and $\EG = \colimm \EG^{\it gm,m}$,
 	one obtains the homotopy commutative diagram
 	\[\xymatrix{ {\rmh^{*, \bullet}(\EG{\underset {\rmG} \times}\rmX, \rmM)} \ar@<1ex>[r]^{i^*} \ar@<1ex>[d]^{tr_{\rmX}^*} &  {\rmh^{*, \bullet}(\EG{\underset {\rmG} \times}\rmX^{{\mathbb G}_{\it m}}, \rmM)}  \ar@<1ex>[dl]^{tr_{\rmX^{{\mathbb G}_{\it m}}}^*} \\
 		{\rmh^{*, \bullet}(\BG, \rmM)}} \]
 where $\rmh^{*, \bullet}(\quad, \rmM)$ denotes the cohomology with respect to the motivic spectrum $\rmM$ and $i: \EG{\underset {\rmG} \times}\rmX^{{\mathbb G}_{\it m}} \ra \EG{\underset {\rmG} \times}\rmX$ is the map induced by the closed immersion $\rmX^{{\mathbb G}_{\it m}} \ra \rmX$.		
 \end{proposition}
 \begin{proof} \index{transfer: fixed points for torus actions}
In view of the assumption that $\rmX$ is projective, we invoke the Bialynicki-Birula decomposition of $\rmX$ into
finitely many locally closed subschemes $\rmX_{\alpha}^+$, 
so that each $\rmX_{\alpha}^+$ is an affine space bundle on $\rmX_{\alpha}$, which is a connected component of the fixed point scheme 
$\rmX^{{\mathbb G}_{\it m}}$. (See  \cite[Theorem 2.1]{dB}, \cite{B-B}.) Since the actions of $\rmG$ and ${\mathbb G}_{\it m}$ commute, and $\rmG$ is connected, 
each connected component of the
fixed point scheme $\rmX^{{\mathbb G}_{\it m}}$ is stable by $\rmG$. Therefore, it follows that each of the $\rmX_{\alpha}^+$ is also stable by $\rmG$.
\vskip .1cm
In view of the assumed rigidity property for the spectrum $\rmM$, Theorem ~\ref{add.transf}(iv)
shows that 
\[tr_{\rmX}^* = {\rm RHom}(\EG^{\it gm,m}{\underset {\rmG} \times}{\tr'}_{\rmX}^{\rmG}, \rmM) = {\Sigma}_{\alpha} tr_{X_{\alpha}^+}^* = {\Sigma}_{\alpha}{\rm RHom}(\EG^{\it gm,m}{\underset {\rmG} \times}({\it i_{\alpha}^+} \circ {\tr'}_{\rmX_{\alpha}^+}), \rmM).\]
In view of the observation that each $\rmX_{\alpha}^+ \ra \rmX_{\alpha}$ is an affine-space  bundle, the last sum
identifies with 
\[{\Sigma}_{\alpha}{\rm RHom}(\EG^{\it gm,m}{\underset {\rmG} \times}({\it i_{\alpha}} \circ {\tr'}_{\rmX_{\alpha}}), \rmM) = {\rm RHom}(\EG^{\it gm,m}{\underset {\rmG} \times}({\it i} \circ {\tr'}_{\rmX^{{\mathbb G}_{\it m}}}), \rmM) = tr_{\rmX^{{\mathbb G}_{\it m}}}^* \circ i^*,\]
where $i_{\alpha}^+: \rmX_{\alpha}^+ \ra \rmX$, $i_{\alpha}: \rmX_{\alpha} \ra \rmX$ and $i: \rmX^{{\mathbb G}_{\it m}} \ra \rmX$ are the locally closed
immersions. Now one takes the colimit as $m \ra \infty$, to  complete the proof. \qed
\end{proof}
\begin{remark} An example of the situation considered in the above corollary is the following. Let $\rmX={\rm GL}_{n+1}/\rmB_{n+1}$, which is the variety of all Borel subgroups in ${\rm GL}_{n+1}$.
 Let ${\mathbb G}_{\it m}$ denote the $1$-parameter subgroup imbedded in ${\rm GL}_{n+1}$ as the diagonal matrices of the form $\rmI_n \times {\mathbb G}_{\it m}$, with ${\mathbb G}_{\it m}$ appearing in the $(n+1, n+1)$-position. 
 Then consider the action of this ${\mathbb G}_{\it m}$ by conjugation on $\rmX$. Then let $\rmG = {\rm GL}_n$ acting by conjugation on $\rmX$: then the
  actions by $\rmG$ and ${\mathbb G}_{\it m}$ commute. 
\end{remark}
 \subsection{\bf Double Coset formulae}
 \vskip .1cm 
 In this section, we establish various double coset formulae, the analogues of which have been known in the setting of group cohomology for finite groups and also for compact Lie groups. We will explicitly consider only the motivic framework, since the corresponding results in the \'etale framework may be established by entirely similar arguments.
 \vskip .1cm
 The main context in which we consider double coset formulae
 will be as follows. Let $\rmG$ denote a linear algebraic group, and let $\rmH$, $\rmK$ denote two closed linear algebraic subgroups. 
 We will further assume that the group $\rmG$ is {\it special}, when dealing with motivic contexts. This is mainly to keep our discussion
 simpler: on considering the \'etale contexts, i.e., generalized \'etale cohomology theories, clearly there is no need to make this assumption.
 \vskip .1cm
 Let $\rmX$ denote a smooth $\rmG$-scheme. Then, adopting the
 terminology as in \cite[8.3]{CJ23-T1},  that for a linear algebraic group $\rmH$, $\BH = \colimm \BH^{\it gm,m}$ and $\EH = \colimm \EH^{\it gm,m}$, we obtain the cartesian squares, where the map $\rmp_K$ is induced
 by the inclusion $\rmK \ra \rmG$ and the maps $\pi_{\rmH}$, $\tilde \pi_{\rmH}$ are induced by the projection $\rmG/\rmH \ra {\rm Spec} \, \k$:
 \be \begin{equation}
  \label{double.coset.0}
  \xymatrix{{\EK{\underset {\rmK} \times} \rmG/\rmH}   \ar@<1ex>[r]^{\tilde \rmp_K} \ar@<1ex>[d]^{\tilde \pi_H}  & {\EG{\underset {\rmG} \times} \rmG/\rmH} \ar@<1ex>[d]^{\pi_H} &\quad \quad {\EK{\underset {\rmK} \times} ({\rmG\times_{\rmH} \rmX})}   \ar@<1ex>[r]^{\tilde \rmp_K} \ar@<1ex>[d]^{\tilde \pi_H}  & {\EG{\underset {\rmG} \times} ({\rmG\times_{\rmH} \rmX})} \ar@<1ex>[d]^{\pi_H}\\
            {\BK} \ar@<1ex>[r]^{\rmp_K} & {\BG}                                                                                                                &\quad \quad                                                                              {\EK{\underset {\rmK} \times}\rmX } \ar@<1ex>[r]^{\rmp_K} & {\EG{\underset {\rmG} \times} \rmX  }.}
\end{equation} \ee
Observe that the first square is a special case of the second square by taking $\rmX = Spec \, \k$.
Now we make the following assumptions:
\begin{enumerate}[\rm(i)]
\item 
$\rmG/\rmH$ admits a {\it finite} decomposition $\rmG/\rmH = \sqcup_i \rmF_i$,
where each $\rmF_i$ is a locally closed and $\rmK$-stable smooth subscheme of $\rmG/\rmH$.
\item
Here we assume
that $\rmK$ acts on the left on $\rmG$ and on $\rmG/\rmH$, and $\rmH$ acts on the right on $\rmG$.
\end{enumerate}
In particular, it follows that each $\rmF_i$ is a
disjoint union of the double-cosets for the left-action of $\rmK$ on $\rmG$ and the right action of
$\rmH$ on $\rmG$. 
\begin{theorem}
	\label{double.coset.1} 
Assume the situation as in ~\eqref{double.coset.0}. \index{transfer: double coset formulae}
\begin{enumerate}[\rm(i)]
\item Let $\rmM$ denote a motivic spectrum and let $\rmh^{*, \bullet}(\quad, \rmM)$ denote the
  generalized cohomology defined with respect to the spectrum $\rmM$. Denoting the maps induced by the transfers
  \[\tr^{\rmG*}: \rmh^{*, \bullet}(\EG{\underset {\rmG} \times} ({\rmG\times_{\rmH} \rmX}), \rmM) \ra \rmh^{*, \bullet}({\EG{\underset {\rmG} \times} \rmX  }, \rmM), \quad \tr^{\rmK*}: \rmh^{*, \bullet}(\EK{\underset {\rmK} \times} ({\rmG\times_{\rmH} \rmX}), \rmM) \ra \rmh^{*, \bullet}(\EK{\underset {\rmK} \times}\rmX, \rmM),\]
  { and }
  \[\rmp_{\rmK}^*: \rmh^{*, \bullet}({\EG{\underset {\rmG} \times} \rmX  }, \rmM) \ra \rmh^{*, \bullet}(\EK{\underset {\rmK} \times}\rmX, \rmM), \quad {\tilde \rmp_K}^*: \rmh^{*, \bullet}(\EG{\underset {\rmG} \times}({\rmG\times_{\rmH} \rmX}), \rmM) \ra \rmh^{*, \bullet}(\EK{\underset {\rmK} \times} ({\rmG\times_{\rmH} \rmX}), \rmM)\]
 the corresponding pull-backs, we obtain:
 \be \begin{equation}
 \label{double.coset.1.eq}
 \rmp_{\rmK}^* \circ \tr^{\rmG*} = \tr^{\rmK*} \circ \tilde \rmp_{\rmK}^*.
 \end{equation} \ee
 \item
 Assume the base field $k$ is infinite and contains a $\sqrt{-1}$.  Let $\rmM$ denote a motivic spectrum that has the rigidity property as in Definition ~\ref{rigid.prop} and let $\rmh^{*, \bullet}(\quad, \rmM)$ denote the
  generalized cohomology defined with respect to the spectrum $\rmM$. Then, the map induced by the transfer 
 $\tr^{\rmK*} = \rmh^{*, \bullet}(\tr^{\rm K}, \rmM)$ admits a decomposition as $\Sigma _j \rmh^{*, \bullet}(i_j \circ \tr^{\rm K}_{\rmF_j}, \rmM)$, where $\tr^{\rm K*}_{\rmF_j}: \rmh^{*, \bullet}(\EK{\underset {\rmK} \times} (\rmF_j\times \rmX), \rmM) \ra \rmh^{*, \bullet}(\EK{\underset {\rmK} \times}\rmX, \rmM)$
  is the corresponding transfer and $i_j: \EK{\underset {\rmK} \times}(\rmF_j \times \rmX)  \ra
 \EK {\underset {\rmK} \times}(\rmG/\rmH \times \rmX) \cong \EK {\underset {\rmK} \times}(\rmG \times_{\rmH} \rmX) $ is the map induced by the inclusion $\rmF_j \ra \rmG/\rmH$.
\end{enumerate}
 \end{theorem}
 \begin{proof} \index{double coset formulae}
 (i) follows readily from the naturality of the transfer map established in \cite[Proposition 2.4]{CJ23-T2}. Next we consider (ii). This follows readily from Theorem ~\ref{add.transf}(iv), once we observe
	the isomorphism $\rmG\times_{\rmH}\rmX \cong \sqcup _j(\rmF_j \times \rmX)$ of $\rmK$-schemes. Here it is essential that
	$\rmX$ be a $\rmG$-scheme so that the action map induces a map $\rmG\times_{\rmH}\rmX \ra \rmX$. Then this map together
	with the projection $\rmG\times_{\rmH} \rmX \ra \rmG/\rmH$ induces a map $\rmG \times_{\rmH} \rmX \ra \rmG/\rmH \times \rmX$,
	which one can show is an isomorphism as both $\rmG \times _{\rmH} \rmX$ and $\rmG/\rmH \times \rmX$ fiber over $\rmG/\rmH$ with $\rmX$ as the fiber.
\end{proof}
\vskip .2cm \noindent
\begin{corollary} (Double Coset Formulae) 
	\label{double.coset.2}
	Let $  \rmh^{*, \bullet}$ denote a generalized cohomology theory defined with respect to
	a motivic spectrum $\rmM$. Assume  that the base field $k$ is infinite, contains a $\sqrt{-1}$, and that the spectrum $\rmM$ has the rigidity property as 
	in Definition ~\ref{rigid.prop}.
	\begin{enumerate}[\rm(i)]
	\item  Assume that $\rmG$ is a connected split reductive group, and that  $\rmK= \rmH= \rmT$ is a split maximal torus in $\rmG$. Let $\NT$ denote the normalizer of $\rmT$ and let $\rmW= \NT/\rmT$. 
	Then,  $tr^{\rmK*} \circ \tilde \rmp_{\rmK}^*$ (that is, the term appearing on the right-hand-side in ~\eqref{double.coset.1.eq}), identifies with
	 \[{\Sigma}_{ \rmw \eps \rmW}	C_{\rmw} \circ i_{\rmw}^*,\]
	 where 
	 $C_{\rmw}:   \rmh^{*, \bullet}(\EH\times_{\rmH}\rmX, \rmM) {\overset {\cong} \ra }  \rmh^{*, \bullet}(\EH^{\rmw}\times_{{\rmH}^{\rmw}}\rmX, \rmM)$ is the isomorphism induced by conjugation by $\rmw$, $\rmH ^{\rmw} = \rmw\rmH \rmw^{-1}$ and
	 $i_{\rmw}^*:    \rmh^{*, \bullet}(\EK^{\rmw}\times_{\rmK^{\rmw}}({\rmB}w\rmR_u(\rmB^{-}) \times \rmX), \rmM) \ra  
	\rmh^{*, \bullet}(\EK^{\rmw}\times_{\rmK^{\rmw}}(\rmG/\rmH \times \rmX), \rmM) \cong \rmh^{*, \bullet}(\EK^{\rmw}\times_{\rmK^{\rmw}}(\rmG\times_{\rmH} \rmX), \rmM)$ is the map induced by the 
	inclusion $i_{\rmw}: \rmB{\rmw}R_u(\rmB^{-}) \ra \rmG/\rmH$.
	\item
	 Suppose $\rmG$ is a connected split reductive group,  and that  $\rmH$ is a closed linear algebraic subgroup of $\rmG$ of maximal rank and $\rmK=\rmT$ is
	a split maximal torus in $\rmG$ and $\rmH$. We will further assume that $\rmH$ is either $\rmT$, a parabolic subgroup, or a Levi subgroup containing $\rmT$. Let $\rmW_{\rmG}$ ($\rmW_{\rmH}$) denote the Weyl group of $\rmG$ ($\rmH$, \res). In this case the right-hand-side of ~\eqref{double.coset.1.eq} may be written as
	\[{\Sigma}_{\rmw \eps \rmW_{\rmG}/\rmW_{\rmH}}  \circ C_{\rmw} \circ i_{\rmw}^*,\]
	where 
	$C_{\rmw}:   \rmh^{*, \bullet}(\EK\times_{\rmK}\rmX, \rmM) \ra   \rmh^{*, \bullet}(\EK^{\rmw}\times_{{\rmH}^{\rmw}}\rmX , \rmM) \cong \rmh^{*, \bullet}(\EK^{\rmw}\times_{{\rmH}^{\rmw}}(\rmS_{\rmw} \times \rmX) , \rmM)$ with
	$\rmS_{\rmw}$ being the stratum of $\rmG/\rmH$ that is indexed by $\rmw$ is as in (i), and 
	\[i_{\rmw}^*:    \rmh^{*, \bullet}(\EK^{\rmw}\times_{\rmK^{\rmw}}(S_{\rmw} \times \rmX), \rmM) \ra  \rmh^{*, \bullet}(\EK^{\rmw}\times_{\rmK^{\rmw}}(\rmG/\rmH \times \rmX), \rmM) \cong \rmh^{*, \bullet}(\EK^{\rmw}\times_{\rmK^{\rmw}}(\rmG\times_{\rmH} \rmX), \rmM)\]
	is the map induced by the 
	inclusion $i_{\rmw}: S_{\rmw} \ra \rmG/\rmH$, with $S_{\rmw}$ denoting the corresponding stratum indexed by $w \in \rmW$.
	\item
	Let $\cE$ denote a commutative ring spectrum in ${\Spt}(\k_{\rm mot})$, whose presheaves of homotopy groups are  all $\ell$-primary torsion for a
 fixed prime $\ell \ne char (k)$, and
let $\epsilon^*(\cE)$ denote the corresponding spectrum in $\Spt(\k_{et})$. 
 Assume that $\rmM$ is a module spectrum over $\cE$ that 
 has the rigidity property as in Definition ~\ref{rigid.prop}.  Then the 
 	results corresponding to (i) and (ii) also hold for generalized \'etale cohomology with respect to the spectrum $\epsilon ^*(\rmM)$.
        \end{enumerate}
 	\end{corollary}
\begin{proof}
	 First we will consider (i). In this case we first observe that the homogeneous space $\rmG/\rmT$
	admits a decomposition into the double cosets $\rmT \backslash \rmG/\rmT$ which will identify with
	affine spaces over each of the Bruhat-cells.  One begins
	with the Bruhat decomposition $\rmG = \sqcup_{w \eps \rmW} \rmB w \rmB^{-}$, where $\rmB$ is a Borel subgroup containing the given maximal torus $\rmT$ and $\rmB ^{-}$ is its opposite Borel subgroup. Then $ \rmG/ \rmT = \sqcup _{\rmw \eps \rmW}   \rmB \rmw \rmR _u( \rmB ^{-})$ where
	$\rmR _u(\rmB^{-})$ denotes the unipotent radical of $\rmB^{-}$. Now we invoke Theorem ~\ref{double.coset.1}(ii). 
	\vskip .2cm
         Observe that each of the
	  strata $\rmB\rmw \rmR _u(\rmB^{-})$ is an affine space and has a fixed ($k$-rational) point for the action of $\rmT$, which corresponds to the origin of the affine space $\rmB\rmw \rmR _u(\rmB^{-})$. We will 
	  denote this $k$-rational point by $0_{\rmw}$.
	  Therefore, the corresponding transfer (i.e., in the setting of Theorem ~\ref{double.coset.1}, the transfer denoted $tr^{\rmK*}$)
	  sends 
	  \[{\Sigma^{\infty}_{\T}} \BT_+ \mbox{ to } {\Sigma^{\infty}_{\T}} \BT_+ \simeq {\Sigma^{\infty}_{\T}}\ET{\underset {\rmT} \times}0_{\rmw}\]
	  by the map induced by 
	  sending ${\rm Spec} \, \k$ to the coset ${\tilde \rmw}\rmT$ in $\rmG/\rmT$, where $\tilde \rmw \eps \NT$ is the element corresponding to $\rmw$. This in fact corresponds to the 
	  self-map of ${\Sigma^{\infty}_{\T}} \BT_+$ induced by the automorphism of $\rmT$ defined by conjugation
	  by $\tilde \rmw$. This is because conjugation by $\tilde \rmw$ sends the chosen Borel subgroup $\rmB$ containing $\rmT$ to
	  $\tilde \rmw \rmB \tilde \rmw^{-1}$ and this sends the chosen $\rmT$ in $\rmB$ to its conjugate $\tilde \rmw\rmT \tilde \rmw^{-1}$. (See, for example, the discussion in \cite[(3.5) Theorem]{BM}.)
	  Moreover, one may observe that if $\rmT$ acts on a scheme $\rmY$, with the action denoted by $\mu: \rmT \times \rmY \ra \rmY$,
	  then for each $\tilde \rmw \eps \NT$, one may define
	  a new action on $\rmY$ by $\rmT$, by $(t, {\it y}) \mapsto \mu(C_{\tilde \rmw}(t), {\it y})$, where $C_{\tilde \rmw}$ denotes
	   conjugation by the element ${\tilde \rmw}  \eps \rmN(\rmT)$. Therefore, when $\rmG$ is provided
	   with an action on a smooth scheme $\rmX$ as in the second square in ~\eqref{double.coset.0}, the transfer sends
	   ${\Sigma^{\infty}_{\T}}(\ET\times _{T}\rmX)_+ $ to ${\Sigma^{\infty}_{\T}}(\ET^{\tilde \rmw}\times _{T^{\tilde \rmw}}({\rmB}w\rmR_u(\rmB^{-}) \times \rmX)_+ $, where
	   the superscript ${\tilde \rmw}$ denotes the new action of $\rmT$ on the relevant schemes involving the conjugation by ${\tilde \rmw}$.
	  This proves (i).  
\vskip .2cm 
	The proof of (ii) is similar. First we consider the case where $\rmH$ is a 
	parabolic subgroup, with Levi-factor $\rmL$. Then one knows that there is a set of simple roots ${\rm I}$,
	among the basis of simple roots $\Delta$, so that $ \rmL = L_{I}= Z_G((\cap _{\alpha \eps I}Ker (\alpha))^o)$ and $\rmH = \rmP_{\rmI}$, the corresponding parabolic subgroup. The Weyl group $\rmW_{\rmH}$ is then generated by the simple reflections
	$s_{\alpha}, \alpha \eps \rmI$. In this case, one obtains a decomposition of $\rmG$
	as $ \sqcup_{\rmw \eps \rmW_{\rmG}/\rmW_{\rmH}}\rmB \rmw \rmH$ and therefore, the  coset decomposition
	$\rmG/\rmH = \sqcup_{\rmw \eps \rmW_{\rmG}/\rmW_{\rmH}}  \rmB\rmw $. In case $\rmH$ is actually a Borel subgroup, $\rmW_{\rmH}$ is trivial.
	\vskip .2cm
	Observe that each stratum $\rmB w$ of $\rmG/\rmH$ is also an affine space and has a fixed ($k$-rational) point for the action of $\rmT$, which corresponds to the origin of the affine space $\rmB\rmw$. We will 
	  denote this $k$-rational point by $0_{\rmw}$. Therefore an argument as in the last case shows
	that the transfer (i.e., in the setting of Theorem ~\ref{double.coset.1}, the transfer denoted $tr^{\rmK*}$
	sends  
	\[{\Sigma^{\infty}_{\T}} \BT_+ \mbox{ to }{\Sigma^{\infty}_{\T}} \BT_+ \simeq {\Sigma^{\infty}_{\T}}\ET{\underset {\rmT} \times}0_{\rmw} \simeq {\Sigma^{\infty}_{\T}}\ET{\underset {\rmT} \times}(\rmB w)\]
	by the map induced by 
	  sending ${\rm Spec} \, \k$ to the coset ${\tilde \rmw}\rmH$ in $\rmG/\rmH$, where $\tilde \rmw \eps \rmN(\rmT)$ is the element corresponding to $\rmw$. This in fact corresponds to the 
	  self-map of ${\Sigma^{\infty}_{\T}} \BT_+$ induced by the automorphism of $\rmT$ defined by conjugation
	  by $\tilde \rmw$. Thus the above transfer sends ${\Sigma^{\infty}_{\T}} \BT_+$ to ${\Sigma^{\infty}_{\T}}(\ET^{\tilde w}{\underset {\rmT^{\tilde w}} \times}(\rmB w))_+$ and hence
	  sends ${\Sigma^{\infty}_{\T}}(\ET^{\tilde w}{\underset {\rmT^{\tilde w}} \times}\rmX)_+ $ to ${\Sigma^{\infty}_{\T}}(\ET^{\tilde w}{\underset {\rmT^{\tilde w}} \times}(\rmB w \times \rmX))_+$. We denote this map by $\tilde \rmC_{\rm w}$.
	  As a result the composition $\tr^{\rmK*} \circ \tilde \rmp_{\rmK}^*$ is equal to the sum $\Sigma _{{\rm w} \eps \rmW_{\rmG}/\rmW_{\rmH}} \tilde \rmC_{\rm w}^* \circ i_{\rm w}^*$ 
	  where $i_{\rmw}: \ET^{\tilde w}{\underset {\rmT^{\tilde w}} \times}(\rmB w \times \rmX) \ra \EG\times_{\rmG} (\rmB w \times \rmX)  \ra \EG\times_{\rmG} (\rmG/\rmH \times \rmX) \cong \EG\times_{\rmG} (\rmG \times_{\rmH} \rmX) $
	  is the obvious inclusion map. Denoting $\tilde \rmC_{\rm w}^*$ by $\rmC_{\rm w}$, this 
	proves (ii) in the case $\rmH$ is a parabolic subgroup. 
	\vskip .2cm 
	The case $\rmH = \rmL= \rmL_{\rmI}$ a Levi subgroup reduces to the above case, since
	we may start with the decomposition of $\rmG$ as $\sqcup_{w \eps \rmW_{\rmG}/\rmW_{\rmH}}\rmB w \rmP_ {\rmI}$ and hence
	a  coset decomposition $\rmG/ \rmL = \sqcup_{w \eps \rmW_{\rmG}/\rmW_{\rmH}}\rmB w\rm R_u( \rmP_{\rmI})$, with the strata again acyclic.
	We skip the proof of (iii) which is similar.
	\end{proof}
\begin{remark}
 Observe in (i)that each stratum ${\rmB}{\rm w}\rmR_u(\rmB^{-})$ is an affine space with the origin centered at the fixed point of $\rmT$
 indexed by ${\rm w} \in \rmW$. Therefore, the pull-back $i_{\rm w}^*$ in (i) is an isomorphism: hence we will identify 
 ${\Sigma}_{ \rmw \eps \rmW}C_{\rmw} \circ i_{\rmw}^*$ in (i) with ${\Sigma}_{ \rmw \eps \rmW} C_{\rmw}$.
 \end{remark}

	\vskip .2cm \noindent
\begin{corollary}
\label{double.coset.3}
Let $  \rmh^{*, \bullet}$ denote a generalized cohomology theory defined with respect to
a motivic spectrum $\rmM$. We will further assume that generalized cohomology theory is orientable in the sense that it has a theory of 
Chern classes, the base field $k$ is infinite,  contains a $\sqrt{-1}$,  and that the spectrum $\rmM$ has the rigidity 
property in Definition ~\ref{rigid.prop}.
\vskip .1cm
Assume $\rmX$ is a $\rmG$-scheme or an unpointed simplicial presheaf with $\rmG$-action, for a connected split reductive group $\rmG$, with split maximal torus $\rmT$. Then
\[  \rmh^{*, \bullet}(\EG{\underset {\rmG} \times} \rmX, \rmM) \cong   \rmh^{*, \bullet}(\ET{\underset {\T} \times}\rmX, \rmM)^{\rmW}, \quad \rmW = \NT/\rmT \]
if $|\rmW|$ is a unit in the cohomology theory $  \rmh^{*, \bullet}$.  Corresponding results also hold for generalized \'etale cohomology theories defined with respect to
the spectrum $\epsilon^*(\rmM) \eps \Spt(\k_{et})$ (that is, on the \'etale site). 
\end{corollary}
\begin{proof} Throughout the proof, we will denote the generalized motivic cohomology theory simply as $\rmh^{*, \bullet}(\quad)$.
        Let $\pi: \ET{\underset {\rmT} \times}\rmX \simeq \EG{\underset {\rmG} \times} (\rmG{\underset {\rmT} \times} \rmX) \ra \EG{\underset {\rmG} \times}\rmX$ denote the map induced by the map $\rmG{\underset {\rmT} \times} \rmX\ra \rmX$, sending $(g, x) \mapsto gx$ and let
	the corresponding transfer be denoted $tr$.
	 Then the  first step is to observe that the map
	\[\pi^*:  {\rmh}^{*, \bullet}(\EG{\underset {\rmG} \times  }\rmX  ) \ra  { \rmh}^{*, \bullet}(\ET{\underset {\T} \times}\rmX) \]
	is a split injection since $|\rmW|$ is a unit in the given generalized cohomology theory.  
	This can be seen using the transfer map:
	\[\tr^*:{ \rmh}^{*, \bullet}(\ET{\underset {\T} \times}\rmX) \ra {\rmh}^{*, \bullet}(\EG{\underset {\rmG} \times  }\rmX  )\]
	which is in fact the composition of the two transfer maps:
	\[\tr_f^*:{ \rmh}^{*, \bullet}(\ET{\underset {\T} \times}\rmX) \ra {\rmh}^{*, \bullet}(\E\NT{\underset {\NT} \times  }\rmX  ) \mbox{ and } \tr^*:{ \rmh}^{*, \bullet}(\E\NT{\underset {\NT} \times}\rmX) \ra {\rmh}^{*, \bullet}(\EG{\underset {\rmG} \times  }\rmX  ).\]
	Observe that the first transfer map is associated to the degree $|\rmW|$ finite \'etale map 
	$\ET{\underset {\T} \times}\rmX \ra \E\NT{\underset {\NT} \times  }\rmX$. The transfer for such finite \'etale maps (and more generally for any 
	projective smooth map) has been constructed in \cite[Corollary 3.24]{JP22}, where we show that pull-back by such a transfer corresponds to push-forward
	in any orientable generalized motivic cohomology theory.
	\vskip .2cm
	Then Corollary ~\ref{double.coset.2}(ii)
	shows that the image of the last map identifies with the $\rmW$-invariant part of  
	$  \rmh^{*, \bullet}(\ET{\underset {\T} \times}\rmX)$. This is a standard argument,
	but for the sake of completeness, we will provide further details.
	\vskip .2cm
	 Since $\chi_{\rm mot}(\rmG/\rmN(\rmT)) =\tau_{\rmG/\rmN(\rmT)}^*(1)=1$ and
	$\chi_{\rm mot}(\rmN(\rmT)/\rmT)= \tau_{\rmN(\rmT)/\rmT}^*(1)=|\rmW|$, one sees that $\chi_{\rm mot}(\rmG/\rmT) = \tau_{\rmG/\rmT}^*(1)=|\rmW|$. Therefore,
	we obtain: 
	\be \begin{equation}
	\label{image.p*.1}
	 tr^*\circ \pi^*(\alpha) = |\rmW|\alpha,  \alpha \eps  \rmh^{*, \bullet}(\EG{\underset {\rmG} \times}\rmX).
	 \end{equation} \ee
	Therefore, since $|\rmW|$ is assumed to be a unit, the map $\pi^*$ is injective.
	Next let $\alpha \eps  \rmh^{*, \bullet}(\ET{\underset {\rmT} \times}\rmX)^{\rmW}$. 
	Then, by Corollary ~\ref{double.coset.2}(ii), we obtain:
	\be \begin{equation}
	\label{image.p*.2}
	 \pi^* \circ  tr^*(\alpha) = {\Sigma}_{\rmw \eps \rmW}C_{\rmw}(\alpha)= |\rmW|\alpha.
	\end{equation} \ee
	Then ~\eqref{image.p*.1} and ~\eqref{image.p*.2} along with the assumption that $|\rmW|$ is a unit show that 
	$ \rmh^{*, \bullet}(\ET{\underset {\rmT} \times}\rmX)^{\rmW} \subseteq Image(\pi^*)$.
	Finally, one may observe that $C_{\rmw} \circ \pi^* = \pi^*$, for all $\rmw \eps \rmW$. 
	To see this, it is enough to observe that $\pi$ corresponds to the map
	\[\EG\times_{\rmG}(\rmG\times_{\rmT}\rmX) \ra \EG\times_{\rmG}\rmX\]
	discussed in the first paragraph of the proof, and that the action of the Weyl group on the source is induced by the action of $\rmW$ on $\rmG/\rmT$, which itself
	is induced by the corresponding action of $\rmW$ on $\rmG/\rmB$. Since $\rmG/\rmT$ is sent to $Spec \, \k$ under
	$\pi$, it follows that $C_{\rmw} \circ \pi^* = \pi^*$, for all $\rmw \eps \rmW$.
	This shows that the image
	of $\pi^*$ is contained in $ \rmh^{*, \bullet}(\ET{\underset {\rmT} \times}\rmX)^{\rmW}$.
	\end{proof}
	\vskip .2cm \noindent
\begin{corollary}
 \label{double.coset.4}\index{Brauer groups}
Assume the base field is infinite and contains a $\sqrt -1$.  Let $\rmG$ denote a connected reductive group that is {\it special} and $\rmT$ denote a split maximal torus of $\rmG$. 
 Let $\rmH^{*, \bullet}_{\rm mot}(\quad, \bZ/\ell^n)$ ($\rmH^*_{et}(\quad, \mu_{\ell^n}(\bullet))$) denote motivic cohomology with $\bZ/\ell^n$-coefficients (\'etale cohomology with
  respect to the sheaf $\mu_{\ell^n}(\bullet)$), where
 $\ell$ is a prime different from $char (k)$ and $n$ is a fixed positive integer. Then the following hold:
 \begin{enumerate}[\rm(i)]
 	\item Assume further that  $|\rmW|$ is prime to $\ell$. If $\rmX$ is any smooth $\rmG$-scheme or an unpointed simplicial presheaf with $\rmG$-action, then $\rmH^{*, \bullet}_{\rm mot}(\EG{\underset {\rmG} \times}\rmX, \bZ/\ell^n) \cong 
 	    \rmH^{*, \bullet}_{\rm mot}(\ET{\underset {\rmT} \times}\rmX, \bZ/\ell^n)^{\rmW}$, where $\rmT$ denotes a maximal torus in $\rmG$.
 	\item Let $\rmH$ denote a closed linear algebraic subgroup of maximal rank in $\rmG$ so that it is also {\it special}, and $\rmT$ is a maximal torus in $\rmH$ as well. 
 	Let $\rmW_{\rmH}$ denote the Weyl group ${\rm {N_{H}(T)}}/\rmT$, where ${\rm {N_H(T)}}$ denotes the normalizer of $\rmT$ in $\rmH$. Assume that $|\rmW_{\rmH}|$ is prime to $\ell$.
 	Then 
 	$ \rmH^{*, \bullet}_{\rm mot}(\rmG /\rmH, \bZ/ \ell^n) \cong  \rmH^{*, \bullet}_{\rm mot}(\rmG/\rmT, \bZ/\ell^n)^{\rmW_{\rmH}} $. 
       \item The statements corresponding to those in (i) and (ii) also hold 
 	 for \'etale cohomology with $\bZ/\ell^n$-coefficients.
       \item Therefore, under the assumptions of (i) ((ii)), 
         \[{\rm Br}(\EG{\underset {\rmG} \times}\rmX)_{\ell^n} \cong {\rm Br}(\ET{\underset {\rmT} \times}\rmX)^{\rm W}_{\ell^n}\]
 	\[ (\,{\rm Br}(\rmG/\rmH)_{\ell ^n} \cong {\rm Br}({\rm Spec} \, \k)_{\ell^n}, \res.)\]
 	Here the subscript $\ell^n$ denotes the $\ell^n$-torsion subgroup.
 	\item Assume in addition to the hypotheses in (i) that the base field $k$ is separably closed and that the 
 	 cycle map  induces an isomorphism
	 $\H^{i, j}_{\rm mot}(\rmX, \bZ/\ell^n) \ra \H^i_{et}(\rmX, \mu_{\ell^n}(j))$ for all $0\le i \le 2$ and all $0\le j \le 1$. 
	 Then the induced map $\H^{i,j}_{\rmG, mot}(\rmX, \bZ/\ell^n) = \H^{i,j}_{\rm mot} (\EG{\underset {\rmG} \times}\rmX, \bZ/\ell^n) \ra \H^i_{\rmG, et}(\rmX, \mu_{\ell^n}(j)) = \H^i_{et}(\EG{\underset {\rmG} \times}\rmX, \mu_{\ell^n}(j))$
	 is also an isomorphism for all $0 \le i \le 2$ and $0 \le j \ge 1$  provided $|\rmW|$ is relatively prime to $\ell$.

      \end{enumerate}
\end{corollary}

\begin{proof} \index{Brauer groups}
	
	The reason for assuming that the group $\rmG$ is special, is so that one can do
	the Borel construction $\EG{\underset {\rmG} \times} \rmX$ on the Zariski site itself. A similar reason holds for
	assuming that $\rmH$ is also special.
	\vskip .1cm
	The statement in (i) is clear from Corollary ~\ref{double.coset.3}. Clearly one obtains
	a corresponding statement in \'etale cohomology as well.  
	
	\vskip .2cm
   Next we will prove (ii) by observing the sequence of isomorphisms:
	\[  \rmH^{*, \bullet}_{\rm mot}(\rmG/\rmH, \bZ/\ell^n) \cong \rmH^{*, \bullet}_{\rm mot}(\EH{\underset {\rmH} \times }\rmG, \bZ/\ell^n) \cong H^{*, \bullet}_{\rm mot}(\ET{\underset {\rmT} \times }\rmG, \bZ/\ell^n)^{\rmW_{\rmH}} \cong H^{*, \bullet}_{\rm mot}(G/T, \bZ/\ell^n)^{\rmW_{\rmH}}. \]
	The first and last isomorphisms follow from the fact that $\EH$ and $\ET$ are both ${\mathbb A}^1$-acyclic. The statement in (i) 
	provides the second isomorphism, by replacing $\rmG$ ($\rmX$, $\rmW$) in (i) by $\rmH$ ($\rmG$, $\rmW_{\rmH}$, \res). This proves (ii) and clearly the same proof carries over to \'etale cohomology, thereby proving (iii).
	\vskip .2cm 
	One may observe the identification $\ET\times_{\rmT}\rmX \cong \EG\times_{\rmG}(\rmG\times_{\rmT}\rmX)$ with the 
	action of $\rmW$ on $\EG\times_{\rmG}(\rmG\times_{\rmT}\rmX)$ induced by its action on $\rmG/\rmT \simeq \rmG/\rmB$, where $\rmB$ is a Borel subgroup containing $\rmT$.
	On viewing
	 the cycle map as induced by the map ${\mathbb Z}/\ell^n(i) \ra \rmR\epsilon_* \epsilon^*{\mathbb Z}/\ell^n({\it i})$, it becomes
	  clear that it is compatible with the action of $\rmW$ on $\rmG/\rmT$ and on $\EG\times_{\rmG}(\rmG\times_{\rmT}\rmX)$.
	This completes the proof of the first statement in (iv).
	\vskip .2cm 
	We argue in a similar manner to prove the second statement in (iv). Observe that the action of $\rmW_{\rmH}$
	on $\ET\times_{\rmT} \rmG \cong \EH\times_{\rmH}(\rmH\times_{\rmT}\rmG)$ is induced by the action of $\rmW_{\rmH}$
	on $\rmH/\rmT$. 
	\vskip .1cm 
	Finally  we make use of the short-exact sequence
	\be \begin{equation}
	 \label{Kummer.seq.2}
	0 \ra \rmH^{2, 1}_{\rm mot}({\rmG}/\rmT, {\mathbb Z}/\ell^n) \ra \rmH^2_{et}({\rmG}/\rmT, \mu_{\ell^n}(1)) \ra \Br({\rmG}/\rmT)_{\ell^n} \ra 0 \,  
	\end{equation} \ee
	\vskip .1cm \noindent
	 Since the map $\rmH^{2, 1}_{\rm mot}({\rmG}/\rmT, {\mathbb Z}/\ell^n )\ra \rmH^2_{et}({\rmG}/\rmT, \mu_{\ell^n}(1))$ is the cycle map, which has been observed to be injective with its
	cokernel isomorphic to ${\rm Br}({\rm Spec} \, \k)_{\ell^n}$ as shown in Lemma ~\ref{cycle.lemma}, it follows that
	$\Br(\rmG/\rmT)_{\ell^n} \cong {\rm Br}(Spec \, \k)_{\ell^n}$. One may see that $\Br({\rm Spec} \, k)_{\ell^n}$ injects
	diagonally into the sum of copies of $\Br({\rm Spec} \, k)_{\ell^n}$ at the origins of the Schubert cells in 
	$\rmG/\rmB \simeq \rmG/\rmT$. Therefore, $\rmW_{\rmH}$ acts trivially on it.
	This completes the proof of the second statement in (iv).
	\vskip .2cm
	Next we will consider the last statement.
	The statement in (i) along with its counterpart in \'etale cohomology, provides the isomorphisms:
 \[\H^{*, \bullet}_{\rmG, mot}(\rmX, \bZ/\ell^n) \cong \H^{*, \bullet}_{\rmT, mot}(\rmX, \bZ/\ell^n)^{\rmW} \mbox{and}\]
\[\H^{*, \bullet}_{\rmG, et}(\rmX, \bZ/\ell^n) \cong \H^{*, \bullet}_{\rmT, et}(\rmX, \bZ/\ell^n)^{\rmW}.\]
       Therefore, we reduce to the case where $\rmG$ is replaced by a split torus $\rmT$. At this point, we observe that a choice of 
       $\BT^{\it gm,m} = \Pi_{i=1}^n{\mathbb P}^m $, if $\rmT = {\mathbb G}_{\it m} ^n$. 
\vskip .2cm
       Observe that $\ET^{\it gm,m} \ra \BT^{\it gm,m}$ is a Zariski locally trivial torsor for the action $\rmT$, as $\rmT = {\mathbb G}_{\it m}^n$ is a
       split torus, and hence is special as a linear algebraic group in the sense of Grothendieck: see \cite{Ch}. 
       Taking $n=1$, we see that $\pi^m: \rmE{\mathbb G}_{\it m}^{\it gm,m} \ra \rmB{\mathbb G}_{\it m}^{\it gm,m} ={\mathbb P}^m $ is such a torsor, so that there is a Zariski open cover 
       $\{\rmU_j|j=1, \cdots, \rmN\}$
       where $\pi^m_{|\rmU_j}$ is of the form $\rmU_j \times {\mathbb G}_{\it m} \ra \rmU_j$, $j=1, \cdots, \rmN$.
 \vskip .2cm
      Let $\{\rmV_0, \cdots \rmV_m\}$ denote the open cover of ${\mathbb P}^m$ obtained by letting $\rmV_i$ be the open subscheme where 
      the homogeneous coordinate $x_i$, $i=0,\cdots, m$ on ${\mathbb P}^m$ is non-zero.
      Without loss of 
      generality, we may assume the $\rmU_j$ refine the open cover $\{\rmV_i|i=0, \cdots m\}$. Finally the observation that the Picard groups of
      affine spaces are trivial, shows that one may in fact take $\rmN=m$ and $\rmU_j= \rmV_j$, $j=0, \cdots, m$.
      Now one may take an open cover of $\Pi_{i=1}^n{\mathbb P}^m$ by taking the product of the affine spaces that form the open cover of
      each factor ${\mathbb P}^m$. We will denote this open cover of $\Pi_{i=1}^n{\mathbb P}^m$ by $\{\rmW_{\alpha}|\alpha\}$.
\vskip .2cm
      Let $p: \ET^{\it gm,m} {\underset {\rmT} \times}\rmX \ra \BT^{\it gm,m}$ denote the obvious map, and let $\epsilon$ denote the
      map from the \'etale site to the Nisnevich site. Let ${\mathbb Z}/\ell^n(i)$ denote the motivic complex of weight $i$
      on the Nisnevich site of $\ET^{\it gm,m}{\underset {\rmT} \times}X$. Then one obtains the identification (see \cite{Voev11}, \cite{HW}):

\begin{equation}
   {\mathbb Z}/\ell^n(i) = \tau_{\le i} \rmR\epsilon_* \epsilon ^*({\mathbb Z}/\ell^n({\it i})).  
\end{equation} 
\vskip .1cm
     Therefore, on applying $\rmR{\it p}_*$, we obtain the natural maps:
\begin{align}
\label{cycl.map}
     \rmR{\it p}_*({\mathbb Z}/\ell^n(i)) {\overset {\simeq} \ra} \rmR{\it p}_*(\tau_{\le i} \rmR\epsilon_* \epsilon ^*({\mathbb Z}/\ell^n({\it i}))) &\ra \rmR{\it p}_* \rmR\epsilon_* \epsilon^*({\mathbb Z}/\ell^n({\it i}))\\
      &\cong \rmR{\it p}_*\rmR\epsilon_*\mu_{\ell^n}({\it i}) \cong \rmR\epsilon_* \rmR{\it p}_* \mu_{\ell^n}({\it i}).\notag
\end{align} 
      On taking the sections over each Zariski open set $\rmW_{\alpha}$ in the above cover of $\BT^{\it gm,m}$, we obtain a quasi-isomorphism, since affine-spaces
   are contractible for motivic cohomology, and also for \'etale cohomology with respect to $\mu_{\ell^n}(j)$ as the base field is separably closed with $\ell \ne char (k)$. Now
   a Mayer-Vietoris argument using the above open cover of $\BT^{\it gm,m}$, discussed in the Lemmas ~\ref{isom.lemma}, 
   ~\ref{1.intersect} and Proposition ~\ref{inductive.pf} completes the proof of the last statement.
  This completes the proof of Corollary ~\ref{double.coset.4}. \qed
	\end{proof}

\begin{remarks} (i) Observe that taking $\rmX = Spec \, \k$ in (i) and (ii), shows that the higher cycle map
\[\rmH^{*, \bullet}(\BG, {\mathbb Z}/\ell^n) \ra \rmH^{*, \bullet}_{et}(\BG, \mu_{\ell^n})\]
is an isomorphism for any linear algebraic group $\rmG$ that is special, if $\ell$ is prime to $|\rmW|$. In fact the same conclusion also
holds for all linear algebraic groups $\rmG$, since $\BG = \colimm \epsilon_*(\BG^{\it gm,m})$ by \cite[p. 135, Proposition 2.6]{MV}.
(One may consult \cite{DIJ23} for more details.)
\vskip .1cm
(ii) The forthcoming paper \cite{DIJ23}, extends the results of the above corollary in several directions.
First the statement (v) above is sharpened to prove that if the cycle map $\H^{2, 1}_{\rm mot}(\rmX, \bZ/\ell^n) \ra \H^2_{et}(\rmX, \mu_{\ell^n}(1))$
is an isomorphism (equivalently $\Br(\rmX)_{\ell^n}=0$, where $\Br(\rmX)_{\ell^n}$ denotes the $\ell^n$-torsion part of the Brauer group $\Br(\rmX)$), then the cycle map 
\[ \H^{2,1}_{\rm mot} (\EG{\underset {\rmG} \times}\rmX, \bZ/\ell^n) \ra \H^2_{et}(\EG{\underset {\rmG} \times}\rmX, \mu_{\ell^n}(1)) \]
is also an isomorphism. Equivalently, it is shown there under the above assumptions,  that a certain equivariant Brauer group $\Br_{\rmG}(\rmX)_{\ell^n}=0$.
 This is then shown to imply, under certain mild assumptions,  the triviality of the $\ell^n$-torsion
part of the equivariant Brauer group of the semi-stable locus for the $\rmG$-action on $\rmX$, and hence the triviality of the $\ell^n$-torsion part of the Brauer group of the corresponding GIT-quotient of $\rmX$.
\end{remarks}
We will conclude the above discussion with the following technical results that will be helpful in computing the Brauer groups.
We begin with the spectral sequences:
 \begin{align}
 \label{key.observ.1}
\rmE_2^{s,t} = \rmH^s_{Nis}(U, \rmR^tp_*(\tau_{\le j} \rmR\epsilon_* \epsilon ^*({\mathbb Z}/\ell^n(j)))) & \Rightarrow \rmH^{s+t}_{Nis}(p^{-1}(U), {\mathbb Z}/\ell^n(j))\\
\rmE_2^{s,t} = \rmH^s_{Nis}(U, \rmR^tp_*( \rmR\epsilon_* \epsilon ^*({\mathbb Z}/\ell^n(j)))) & \Rightarrow \rmH^{s+t}_{Nis}(p^{-1}(U),  \rmR\epsilon_* \epsilon ^*({\mathbb Z}/\ell^n(j)))\cong \rmH^{s+t}_{et}(p^{-1}(U), \mu_{\ell^n}(j)) \notag
\end{align} 
The obvious map from the first spectral sequence to the second induces an isomorphism on the $\rmE_2$-terms for $0 \le s+t \le j$, as $s \ge 0$. 
\begin{lemma}
\label{cycle.lemma}
Let $\rmG$ denote a linear algebraic group with $\rmB$  Borel subgroup, both defined over a field $k$. Then the following hold:
\begin{enumerate}[\rm(i)]
\item The cycle map $cycl: {\rmH_{\rmM}^{*, \bullet}(\rmG/\rmB, {\mathbb Z}/\ell^n)} \ra {\rmH_{et}^{*}(\rmG/\rmB, \mu_{\ell^n}(\bullet))}$ is an isomorphism
 when the base field $k$ is separably closed. 
\item
For any base field $k$, ${\rmH_{et}^{\it u}(\rmG/\rmB, \mu_{\ell^n}({\it v}))} \cong \oplus_{i+2m=u, j+m=v} {\rmH_{et}^{\it i}(\Speck, \mu_{\ell^n}({\it j}))} \otimes_{{\mathbb Z}/\ell^n} {\rmH_{\rmM}^{2{\it m}}(\rmG/\rmB, {\mathbb Z}/{\ell^n}({\it m}))}$.\\
 In particular,
 \[{\rmH_{et}^{2}(\rmG/\rmB, \mu_{\ell^n}(1))} \cong {\rmH_{et}^{2}(\Speck, \mu_{\ell^n}(1))} \otimes_{{\mathbb Z}/\ell^n} {\H_{M}^{0}(\rmG/\rmB, {\mathbb Z}/{\ell^n}(0))} \oplus {\rmH_{et}^{0}(\Speck, \mu_{\ell^n}(0))} \otimes_{{\mathbb Z}/\ell^n} {\rmH_{\rmM}^{2}(\rmG/\rmB, {\mathbb Z}/{\ell^n}(1))}\]
\[\cong {\rmH_{et}^{2}(\Speck, \mu_{\ell^n}(1))} \oplus {\rmH_{M}^{2}(\rmG/\rmB, {\mathbb Z}/{\ell^n}(1))}.\]
\item
 For any base field $k$, the cycle map
\[cycl: \rmH^{2,1}_{\rmM}(\rmG/\rmB, {\mathbb Z}/\ell^n) \ra \rmH^2_{et}(\rmG/\rmB, \mu_{\ell^n}(1))\]
is injective with cokernel isomorphic to $\rmH^2_{et}(\Speck, \mu_{\ell^n}(1))$.
\end{enumerate}
\end{lemma}
\begin{proof} Observe that $\rmG/\rmB$ is a flag variety, which is a projective smooth scheme stratified by affine cells.
Alternatively one may make use of a Bialynicki-Birula decomposition with the fixed points of the action of the maximal torus 
corresponding to the elements of the Weyl group $\rmW$. Therefore, (i) and (ii) follow readily and (iii) is an immediate consequence.
 \end{proof}
\begin{lemma}
 \label{isom.lemma} (See \cite[Lemma 9.5]{DIJ23}.)
 Let $p: \X \ra \Y$ denote a map of smooth  schemes over $k$, so that it is Zariski locally trivial, with fibers given by the
  scheme $\rmX$ satisfying the condition that the cycle map:
  \[cycl: \rmH^{2,1}_M(X, {\mathbb Z}/\ell^n) \ra \rmH^{2}_{et}(X, \mu_{\ell^n}(1))\]
  is an isomorphism. Let $\rmU$, $\rmV$ denote two Zariski open subschemes of $\cY$ so that $\X\times_{\Y}\rmU \cong \rmU \times \rmX$
  and $\X\times_{\Y}\rmV \cong \rmV \times \rmX$. Assume that the corresponding cycle maps
  \[\rmH^{2,1}_M(\X\times_{\Y}\rmU, {\mathbb Z}/\ell^n) \ra \rmH^{2}_{et}(\X\times_{\Y}\rmU, \mu_{\ell^n}(1)) \mbox{ and }\rmH^{2,1}_M(\X\times_{\Y}\rmV, {\mathbb Z}/\ell^n) \ra \rmH^{2}_{et}(\X\times_{\Y}\rmV, \mu_{\ell^n}(1))\]
  are both isomorphisms and the cycle map
  \[\rmH^{2,1}_M(\X\times_{\Y}(\rmU \cap \rmV), {\mathbb Z}/\ell^n) \ra \rmH^{2}_{et}(\X\times_{\Y}(\rmU\cap \rmV), \mu_{\ell^n}(1))\]
  is a monomorphism. Then the cycle map
  \[\rmH^{2,1}_M(\X\times_{\Y}(\rmU \cup \rmV), {\mathbb Z}/\ell^n) \ra \rmH^{2}_{et}(\X\times_{\Y}(\rmU\cup \rmV), \mu_{\ell^n}(1))\]
is an isomorphism.
\end{lemma}
\begin{proof}
For a subscheme $\rmW$ in $\rmY$, we will continue to let $\X_{W} = \X\times_{\Y}\rmW$.
Now we consider the commutative diagram with exact rows:
\[\xymatrix{{\rmH^{1,1}_{M}(\X_{\rmU}, {\mathbb Z}/\ell^n) \oplus \rmH^{1,1}_{M}(\X_{\rmV}, {\mathbb Z}/\ell^n)} \ar@<1ex>[r] \ar@<1ex>[d] & {\rmH^{1, 1}_M(\cX_{\rmU \cap \rmV},{\mathbb Z}/\ell^n)} \ar@<1ex>[r]  \ar@<1ex>[d]& {\rmH^{2, 1}_M(\cX_{\rmU \cup \rmV}, {\mathbb Z}/\ell^n)} \ar@<1ex>[d]\\
  {\rmH^{1}_{et}(\X_{\rmU}, \mu_{\ell^n}(1)) \oplus \rmH^{1}_{et}(\X_{\rmV}, \mu_{\ell^n}(1))} \ar@<1ex>[r] & {\rmH^{1}_{et}(\X_{\rmU \cap \rmV}, \mu_{\ell^n}(1))} \ar@<1ex>[r] & {\rmH^{2}_{et}(\cX_{\rmU \cup \rmV}, \mu_{\ell^n}(1)) }}
 \]
\[\xymatrix{{} \ar@<1ex>[r]  &  {\rmH^{2,1}_{M}(\X_{\rmU}, {\mathbb Z}/\ell^n) \oplus \rmH^{2,1}_{M}(\X_{\rmV}, {\mathbb Z}/\ell^n)} \ar@<1ex>[d] \ar@<1ex>[r] &  {\rmH^{2, 1}_M(\X_{\rmU \cap \rmV}, {\mathbb Z}/\ell^n)} \ar@<1ex>[d]\\
  {}  \ar@<1ex>[r] & {\rmH^{2}_{et}(\X_{\rmU}, \mu_{\ell^n}(1)) \oplus \rmH^{2,1}_{et}(\X_{\rmV}, \mu_{\ell^n}(1))}  \ar@<1ex>[r] & {\rmH^{2}_{et}(\X_{\rmU \cap \rmV}, \mu_{\ell^n}(1))} }
 \]
 In view of the spectral sequence in ~\eqref{key.observ.1} with $j=1$, one may observe that the second vertical map is an isomorphism.
 Therefore, a diagram chase applies to prove the required map is an isomorphism
\end{proof}

\begin{proposition} (See \cite[Lemma 9.5]{DIJ23}.)
 \label{inductive.pf}
 Let $p:\cX \ra \cY$ denote a map of smooth schemes over $k$, satisfying the hypotheses of Lemma ~\ref{isom.lemma}. We will further assume  the following:
 let $\rmU_i, i=1, \cdots, n$ denote open subsets of $\cY$, so that the hypotheses of Lemma ~\ref{isom.lemma} holds
 with $\rmU$, $\rmV$ denoting any two of these open sets. Assume further that there exists an affine space ${\mathbb A}^N$
 so that each $\rmU_i \cong {\mathbb A}^N$ and that each intersection $\rmU_{i_1} \cap \rmU_{i_2} \cong {\mathbb G}_m \times {\mathbb A}^{N-1}$.
 Then the following holds, where for a subscheme $\rmW$ in $\rmY$, we will let $\cX_{W} = \cX\times_{\cY}\rmW$, and  cycl will denote the higher cycle map:
 \begin{enumerate}[\rm(i)]
  \item  $cycl: \rmH^{2,1}_M(\cX_{(\rmU_1 \cup \cdots \cup \rmU_{n-1}) \cap \rmU_n}, {\mathbb Z}/\ell^n) \ra \rmH^2_{et}(\cX_{(\rmU_1 \cup \cdots \cup\rmU_{n-1}) \cap \rmU_n}, \mu_{\ell^n}(1))$
  is a monomorphism \mbox { and }
  \item $cycl: \rmH^{2,1}_M(\cX_{(\rmU_1 \cup \cdots \cup \rmU_{n-1}) \cup \rmU_n}, {\mathbb Z}/\ell^n) \ra \rmH^2_{et}(\cX_{(\rmU_1 \cup \cdots \cup \rmU_{n-1}) \cup \rmU_n}, \mu_{\ell^n}(1))$ is an isomorphism.
 \end{enumerate}
\end{proposition}
\begin{proof}
 We will prove these using ascending induction on $n$. We will first consider (i). Observe that the case $n=2$ is handled by 
 Lemma ~\ref{1.intersect}. 
\vskip .1cm
 Assume next that (i) holds when $\rmU_i$, $i=1, \cdots, n$ are any open subsets of $\cY$ satisfying the hypotheses.
 Let $\rmU_i$, $i=1, \cdots, n, n+1$ be open subsets satisfying the hypotheses. Let $\rmW_1 = (\rmU_1 \cup \cdots \cup  \rmU_{n-1}) \cap \rmU_{n+1}$
 and let $\rmW_2 = \rmU_n \cap \rmU_{n+1}$. Then we obtain the commutative diagram:
 \[\xymatrix{{\rmH^{1,1}_{M}(\cX_{\rmW_1}, {\mathbb Z}/\ell^n) \oplus \rmH^{1,1}_{M}(\cX_{\rmW_2}, {\mathbb Z}/\ell^n)} \ar@<1ex>[r] \ar@<1ex>[d] & {\rmH^{1, 1}_M(\cX_{\rmW_1 \cap \rmW_2},{\mathbb Z}/\ell^n)} \ar@<1ex>[r]  \ar@<1ex>[d]& {\rmH^{2, 1}_M(\cX_{\rmW_1 \cup \rmW_2}, {\mathbb Z}/\ell^n)} \ar@<1ex>[d]\\
  {\rmH^{1}_{et}(\cX_{\rmW_1}, \mu_{\ell^n}(1)) \oplus \rmH^{1}_{et}(\cX_{\rmW_2}, \mu_{\ell^n}(1))} \ar@<1ex>[r] & {\rmH^{1}_{et}(\cX_{\rmW_1 \cap \rmW_2}, \mu_{\ell^n}(1))} \ar@<1ex>[r] & {\rmH^{2}_{et}(\cX_{\rmW_1 \cup \rmW_2}, \mu_{\ell^n}(1)) }}
 \]
\[\xymatrix{{} \ar@<1ex>[r]  &  {\rmH^{2,1}_{M}(\cX_{\rmW_1}, {\mathbb Z}/\ell^n) \oplus \rmH^{2,1}_{M}(\cX_{\rmW_2}, {\mathbb Z}/\ell^n)} \ar@<1ex>[d] \ar@<1ex>[r] &  {\rmH^{2, 1}_M(\cX_{\rmW_1 \cap \rmW_2}, {\mathbb Z}/\ell^n)} \ar@<1ex>[d]\\
  {}  \ar@<1ex>[r] & {\rmH^{2}_{et}(\cX_{\rmW_1}, \mu_{\ell^n}(1)) \oplus \rmH^{2,1}_{et}(\cX_{\rmW_2}, \mu_{\ell^n}(1))}  \ar@<1ex>[r] & {\rmH^{2}_{et}(\cX_{\rmW_1 \cap \rmW_2}, \mu_{\ell^n}(1))} }
 \]
 Then the inductive assumption, together with Lemma ~\ref{1.intersect} show the map $\rm\rmH^{2,1}_{M}(\cX_{\rmW_1}, {\mathbb Z}/\ell^n) \ra \rmH^{2}_{et}(\cX_{\rmW_1}, \mu_{\ell^n}(1)) $
 is a monomorphism while Lemma ~\ref{1.intersect} shows the map $\rm\rmH^{2,1}_{M}(\cX_{\rmW_2}, {\mathbb Z}/\ell^n) \ra \rmH^{2}_{et}(\cX_{\rmW_2}, \mu_{\ell^n}(1)) $ is a monomorphism.
 Observe that $\rmW_1 \cup \rmW_2 = (\rmU_1 \cup \cdots \cup \rmU_n) \cap \rmU_{n+1}$.
 In view of the spectral sequence in ~\eqref{key.observ.1} with $j=1$, one may observe that the first two vertical maps are isomorphisms.
 Therefore, now a straight forward diagram chase then shows the cycle map $\rm\rmH^{2, 1}_M(\cX_{\rmW_1 \cup \rmW_2}, {\mathbb Z}/\ell^n) \ra \rmH^{2}_{et}(\cX_{\rmW_1 \cup \rmW_2}, \mu_{\ell^n}(1))$ is 
 a monomorphism, thereby completing the proof of (i).
 \vskip .1cm
 At this point (ii) follows readily from Lemma ~\ref{isom.lemma} by taking $\rmU = \rmU_1 \cup \cdots \cup \rmU_n$
 and $\rmV= \rmU_{n+1}$ there. Now observe that $\rmU \cap \rmV = (\rmU_1 \cup \cdots \cup \rmU_n) \cap \rmU_{n+1}$. (i) proved
 above shows that the cycle map
 \[\rmH^{2,1}_M(\cX\times_{\cY}(\rmU \cap \rmV), {\mathbb Z}/\ell^n) \ra \rmH^{2}_{et}(\cX\times_{\cY}(\rmU\cap \rmV), \mu_{\ell^n}(1))\]
  is a monomorphism. The inductive assumption now shows that 
  the cycle map
 \[\rmH^{2,1}_M(\cX\times_{\cY}\rmU, {\mathbb Z}/\ell^n) \ra \rmH^{2}_{et}(\cX\times_{\cY}\rmU, \mu_{\ell^n}(1))\]
  is an isomorphism. Therefore, the hypotheses of Lemma ~\ref{isom.lemma} are satisfied, so that Lemma ~\ref{isom.lemma} applies to complete the proof of (ii).
\end{proof}
\vskip .2cm

\begin{lemma} 
\label{1.intersect}
Assume that $\rmX$ is a smooth scheme so that the cycle map
\[cycl: \rmH^{i, 1}_M(X, {\mathbb Z}/\ell^n) \ra \rmH^i_{et}(X, \mu_{\ell^n}(1))\]
is an isomorphism for all $0 \le i \le 2$. Then 
the induced cycle map 
 $\rm\rmH^{i, 1}_M(\rmX \times {\mathbb G}_m, {\mathbb Z}/\ell^n) \ra \rmH^i_{et}(\rmX \times {\mathbb G}_m, \mu_{\ell^n}(1))$
is injective for  for all $0 \le i \le 2$.
\end{lemma}
\begin{proof} 
In view of the observation ~\eqref{key.observ.1} above, the above cycle map is an isomorphism for $i=0$ or $i=1$. Therefore, it suffices
to consider the case $i=2$.
  This follows from the commutative diagram 
 of localization sequences:
 \[\xymatrix{{\rmH^{2,1}_{\rmX \times \{0\} ,M}(\rmX \times {\mathbb A}^1, {\mathbb Z}/\ell^n)} \ar@<1ex>[r] \ar@<1ex>[d] & {\rmH^{2, 1}_M(\rmX \times {\mathbb A}^1,{\mathbb Z}/\ell^n)} \ar@<1ex>[r]  \ar@<1ex>[d]& {\rmH^{2, 1}_M(\rmX \times {\mathbb G}_m, {\mathbb Z}/\ell^n)} \ar@<1ex>[d]\\
  {\rmH^{2}_{\rmX \times \{0\}, et}(\rmX \times {\mathbb A}^1, \mu_{\ell^n}(1))} \ar@<1ex>[r] & {\rmH^{2}_{et}(\rmX \times {\mathbb A}^1, \mu_{\ell^n}(1))} \ar@<1ex>[r] & {\rmH^{2}_{et}(\rmX \times {\mathbb G}_m, \mu_{\ell^n}(1)) }}
 \]
\[\xymatrix{{} \ar@<1ex>[r]  & {\rmH^{3,1}_{\rmX \times \{0\}, M}(\rmX  \times {\mathbb A}^1, {\mathbb Z}/\ell^n)} \ar@<1ex>[d] \ar@<1ex>[r] & {\rmH^{3, 1}_M(\rmX \times {\mathbb A}^1, {\mathbb Z}/\ell^n)} \ar@<1ex>[d]\\
  {}  \ar@<1ex>[r] & {\rmH^{3}_{\rmX \times \{0\}, et}(\rmX  \times {\mathbb A}^1, \mu_{\ell^n}(1))}  \ar@<1ex>[r] & {\rmH^{3}_{et}(\rmX \times {\mathbb A}^1, \mu_{\ell^n}(1))} }
 \]
 The map $\rm\rmH^{3,1}_{\rmX \times \{0\}, M}(\rmX  \times {\mathbb A}^1, {\mathbb Z}/\ell^n) \ra \rmH^{3}_{\rmX \times \{0\}, et}(\rmX  \times {\mathbb A}^1, \mu_{\ell^n}(1))$
 identifies with the map 
 \[\rmH^{1,0}_{M}(\rmX \times  \{0\}, {\mathbb Z}/\ell^n) \ra \rmH^{1}_{et}(\rmX \times \{0\}, \mu_{\ell^n}(0))\]
 and $\rm\rmH^{1,0}_{M}(\rmX \times  \{0\}, {\mathbb Z}/\ell^n) \cong C\rmH^0(\rmX \times \{0\}, {Z}/\ell^n; -1) \cong 0$.
 Therefore this map is clearly injective. The map $\rm\rmH^{2,1}_{\rmX \times \{0\}, M}(\rmX  \times {\mathbb A}^1, {\mathbb Z}/\ell^n) \ra \rmH^{2}_{\rmX \times \{0\}, et}(\rmX  \times {\mathbb A}^1, \mu_{\ell^n}(1))$
 identifies with the map 
 \[\rmH^{0,0}_{M}(\rmX \times  \{0\}, {\mathbb Z}/\ell^n) \ra \rmH^{0}_{et}(\rmX \times \{0\}, \mu_{\ell^n}(0))\]
which is also an isomorphism. Now the required assertion follows from the following lemma.
\end{proof}
\vskip .2cm \noindent
As the next and final example, we consider the stable splittings of $\BGL_n$ as
$\bigvee_{i \le n} \BGL _i/\BGL _{i-1}$ in the motivic (and also \'etale) stable homotopy framework. Such splittings were originally obtained in 
\cite{Sn79} and then rederived in \cite{MP}. (See also \cite{K} for a derivation of this along the lines of \cite{Sn79}.) 
As a result, we will refer to these splittings as the Snaith-Mitchell-Priddy splittings and our arguments use the double coset decomposition as in
\cite{MP}.
\begin{corollary}
	\label{double.coset.5} \index{Snaith-Mitchell-Priddy splittings}
	For each integer $n \ge 1$, there exists a splitting 
	\[ {\Sigma_{\T}^{\infty}} \BGL _{n,+} \simeq \bigvee_{i \le n} {\Sigma_{\T}^{\infty}} \BGL _{i,+}/\BGL_{i-1, +} \] 
	in $\Spt(\k_{\rm mot})$, in case $char (k) =0$. In case $char (k)=p>0$, a  corresponding splitting holds on replacing the suspension spectra above
	with the corresponding suspension spectra with $p$-inverted. Let $\cE$ denote a ring spectrum in $\Spt(\k_{et})$ with all its homotopy groups 
	$\ell$-primary torsion, for some prime $\ell \ne char(k)$. Then, a corresponding splitting also holds in $\Spt(\k_{et}, \cE)$ after all the
	above objects have been smashed with the ring spectrum $\cE$. 
\end{corollary}
\begin{proof} 
	\index{Snaith-Mitchell-Priddy splittings}
	We will explicitly consider only the case where $char(k) =0$.
	We will fix a positive integer $m$ and consider the finite degree approximations of all classifying spaces of degree $m$. However, as $m$ will be
	fixed throughout our discussion, we will omit the superscript $m$ and $gm$ so that $\BG$ ($\EG$) will mean $\BG^{\it gm,m}$ ($\EG^{\it gm,m}$, \res), for suitably large $m$ and 
	for any linear algebraic group $\rmG$.
	The proof begins with the cartesian square:
	\be \begin{equation}
	    \label{SMP.1}
	    \xymatrix{{\rm E} \ar@<1ex>[r] \ar@<1ex>[d] & {\BGL _i \times \BGL _j} \ar@<1ex>[d]^{m_{i,j}}\\
	    	       {\BGL _r \times \BGL _s} \ar@<1ex>[r]^{m _{r,s}} & {\BGL _{i+j=r+s}} .}
	\end{equation}
	In this situation, we let the transfer $tr_{i,j}: {\Sigma^{\infty}_{\T}}(\BGL_{i+j, +}) \ra {\Sigma^{\infty}_{\T}}(\BGL_{i, +} \wedge  BGL_{j, +})$. 
	This fits in the framework of Theorem ~\ref{double.coset.1}(i), by taking $\rmG= \rmGL _{i+j=r+s}$, $\rmH= {\rm P}_{i,j}$, and 
	$\rmK = {\rm P}_{r,s}$. 
	Here ${\rm P}_{i,j}$ is the parabolic subgroup of $\rmGL_{i+j}$ that is the stabilizer of an $i$-dimensional subspace in ${\mathbb A}^{i+j}$
	 and ${\rm P}_{r,s}$ is the parabolic subgroup of $\rmGL_{i+j=r+s}$ that is the stabilizer of an $r$-dimensional subspace in ${\mathbb A}^{i+j=r+s}$.
	 Clearly the Levi-subgroup of ${\rm P}_{i,j} = \rmGL_i \times \rmGL_j$ and the Levi-subgroup of ${\rm P}_{r,s}= \rmGL_r \times \rmGL_s$.
	 Since the fibers of the obvious projections ${\rm P}_{i,j} \ra \GL_i \times \GL_j$ and ${\rm P}_{r,s} \ra \GL_r \times \GL_s$ are
	  unipotent groups, they are acyclic in motivic homotopy theory.
	Then $\rmE$ admits a decomposition into double cosets for $\rmH$ and $\rmK$ indexed by the 
	$({\Sigma}_r \times {\Sigma}_s) \times ({\Sigma}_i \times {\Sigma}_j)$-double cosets in ${\Sigma}_n$ as \cite[p. 1]{MP} shows. Replacing the parabolic subgroups by their 
	Levi-factors, each of the resulting components 
	is of the form:
	\[
	\EGL_r{\underset {\rmGL _r} \times} \rmGL _r/(\rmGL_a \times \rmGL_{r-a}) \times  \EGL_s{\underset{\rmGL _s} \times} \rmGL _s/(\rmGL_b \times \rmGL_{s-b}),  \]
	 as $a, b$ vary so that $a\le r, b \le s$ and $a+b=i$.
	 Therefore, the formula in Theorem ~\ref{double.coset.1}(i), which holds for all motivic spectra, applies to provide the identification in the stable motivic homotopy category:
	\be \begin{equation}
	    \label{double.coset.4.1}
	    tr_{i,j} \circ m_{r,s} = \bigvee_{a+b=i,\\a\le r, b \le s} n_{w_{a,b}}c_{w_{a,b}} \circ (tr_{a, r-a} \wedge tr_{b, s-b}),
	\end{equation} \ee
\vskip .1cm \noindent
where the following hold. First observe that
\[\EGL_r{\underset {\rmGL _r} \times} \rmGL _r/(\rmGL_a \times \rmGL_{r-a}) \simeq \BGL_a \times \BGL _{r-a} \mbox{ and }\]
\[\EGL_s{\underset{\rmGL _s} \times} \rmGL _s/(\rmGL_b \times \rmGL_{s-b}) \simeq \BGL _b \times \BGL _{s-b}.\]
Now $c_{w_{a,b}}:\BGL_a \times \BGL _{r-a} \times \BGL _b \times \BGL _{s-b} \ra 
\BGL _a \times \BGL _b \times \BGL _{r-a} \times \BGL _{s-b} \ra \BGL _i \times \BGL _j$ is the obvious map switching the two inner factors. $n_{w_{a,b}}$ is
a non-negative integer depending on the multiplicity of the above components.
\vskip .2cm
Next one defines ${\overline {\BGL}}_n = \BGL _n/\BGL _{n-1}$ and $f_{i,j}: {\Sigma^{\infty}_{\T}} \BGL _{i+j, +} {\overset {tr_{i,j}} \ra } \Sigma^{\infty}_{\T }\BGL _{i,+} \wedge \Sigma^{\infty}_{\T} \BGL _ {j,+} 
{\overset {\pi_{i,j}} \ra } {\Sigma^{\infty}_{\T}}{\overline {\BGL  }  }_{j,+}$, where $\pi_{i,j}$ is the obvious projection. Note that $f_{n,0}: {\Sigma^{\infty}_{\T}}\BGL_{n, +} \ra {\Sigma^{\infty}_{\T}}S^0$ is the augmentation and $f_{0, n}: {\Sigma^{\infty}_{\T}} \BGL_{n, +} \ra {\Sigma^{\infty}_{\T}}{\overline \BGL}_{n, +}$ is the projection.
By composing the maps on the two sides of ~\eqref{double.coset.4.1} with $\pi_{i,j}$, we obtain:
\be \begin{equation}
\label{double.coset.4.2}
f_{i,j} \circ m_{r,s} = \bigvee_{a+b=i,\\ a\le r, b \le s} n_{w_{a,b}}{\overline m}_{r-a, s-b}  \circ (f_{a, r-a} \wedge f_{b, s-b})
\end{equation} \ee
\vskip .1cm \noindent
where ${\overline  m}_{r-a,s-b}: {\Sigma^{\infty}_{\T}}{\overline \BGL}_{r-a, +} \wedge {\Sigma^{\infty}_{\T}}{\overline \BGL}_{s-b,+} \ra {\Sigma^{\infty}_{\T}}{\overline \BGL}_{r-a+s-b, +} =
{\Sigma^{\infty}_{\T}}{\overline \BGL}_{j, +}$ denotes the induced map.
\vskip .2cm
Now observe that the maps $f_{n-j, j}: {\Sigma^{\infty}_{\T}}\BGL _{n,+} \ra {\Sigma^{\infty}_{\T}} {\overline \BGL }_{j, +}$ define the map
\be \begin{equation}
   \label{double.coset.4.3}
   \Pi_{0 \le j \le n} f_{n-j, j} : {\Sigma^{\infty}_{\T}}\BGL _{n,+} \ra \Pi_{0 \le j \le n} {\Sigma^{\infty}_{\T}}  {\overline \BGL}_{j, +} \simeq \bigvee_{0 \le j \le n} {\Sigma^{\infty}_{\T}}  {\overline \BGL}_{j, +}
\end{equation} \ee 
\vskip .1cm \noindent
It suffices to show that this map is a weak-equivalence.  For this, we will adopt the argument given in \cite[Proof of Theorem 4.2]{MP}. Let $g_j: \BGL _j \ra \BGL _n$, for $n = i+j$,
denote the map induced by the inclusion of $\rmGL _j $ into the last $j \times j$ block in 
$\rmGL _n$. Now it suffices to show that the composition $\bar g_{j,+}= f_{n-j,j} \circ {\Sigma^{\infty}_{\T}} g_{j,+}$ is the projection ${\Sigma^{\infty}_{\T}} \BGL _{j, +} \ra {\Sigma^{\infty}_{\T}} {\overline \BGL }_{j, +}$, since then
 the map in ~\eqref{double.coset.4.3} would be a filtration preserving map that induces a weak-equivalence on the associated graded objects. 
\vskip .2cm
 Therefore, we proceed to show that, the composition $\bar g_{j,+}= f_{n-j,j} \circ {\Sigma^{\infty}_{\T}} g_{j,+}$ is the projection ${\Sigma^{\infty}_{\T}} \BGL _{j, +} \ra {\Sigma^{\infty}_{\T}} {\overline \BGL }_{j, +}$. We will take $r=i$, $s=j$ in ~\eqref{double.coset.4.2} and then pre-compose 
the map there with the map $\rmS^0 \wedge {\Sigma^{\infty}_{\T}} \BGL _{j, +} \ra {\Sigma^{\infty}_{\T}}\BGL_{i,+} \wedge {\Sigma^{\infty}_{\T}} \BGL_{j,+}$. Then the left-hand-side
yields $\bar g_{j, +}$, while the right-hand-side yields a finite sum of terms of the form:
\be \begin{align}
\label{double.coset.4.4}
 {\Sigma^{\infty}_{\T}}\rmS^0 \wedge {\Sigma^{\infty}_{\T}} \BGL _{j, +}  &\ra  {\Sigma^{\infty}_{\T}} \BGL _{i,+} \wedge {\Sigma^{\infty}_{\T}} \BGL _{j, +}  {\overset {tr_{a, i-a} \wedge tr_{b, j-b}} \ra}\\
 {\Sigma^{\infty}} _{\T}(\BGL_{a, +} \wedge \BGL _{i-a, +} \wedge \BGL _{b,+} \wedge \BGL _{j-b, +} ) &{\overset {\pi_{i-a} \wedge \pi_{j-b}} \ra}  {\Sigma^{\infty}_{\T}} {\overline \BGL }_{i-a, +}  \wedge {\Sigma^{\infty}_{\T}} {\overline \BGL }_{j-b, +} {\overset {{\overline m}_{i-a, j-b}}  \ra } {\Sigma^{\infty}_{\T}} {\overline \BGL} _{j, +}. \notag
 \end{align} \ee
\vskip .2cm \noindent
If $i>a$, then the above map ${\Sigma^{\infty}_{\T}}\rmS^0 \ra {\Sigma^{\infty}_{\T}}{\overline \BGL}_{i-a, +}$ will factor through ${\Sigma^{\infty}_{\T}}{ \BGL}_{i-a-1,+}$, so that $\rmS^0$ maps to the base point in ${\overline \BGL}_{i-a}$, and therefore the above map will be trivial. 
Clearly it is also trivial for $i<a$, so that the only non-trivial summand in ~\eqref{double.coset.4.4} is when $a=i$, and $b=0$. 
Therefore, the only non-trivial summand in ~\eqref{double.coset.4.4} will be a map of the form:
\[ {\Sigma^{\infty}_{\T}}\rmS^0 \wedge {\Sigma^{\infty}_{\T}}\BGL _{j, + } \ra {\Sigma^{\infty}_{\T}}\BGL_{i, +} \wedge {\Sigma^{\infty}_{\T}}\BGL_{j, +} {\overset {\epsilon \wedge \pi} \ra }
{\Sigma^{\infty}_{\T}}\rmS^0 \wedge {\overline \BGL}_{j, +} \cong {\Sigma^{\infty}_{\T}}{\overline \BGL}_{j, +}.\]

This identifies with the projection ${\Sigma^{\infty}_{\T}}\BGL _{j,+} \ra {\Sigma^{\infty}_{\T}} {\overline \BGL }_{j,+}$ thereby completing the proof of the corollary in the motivic setting, when all
 the classifying spaces have been replaced by a fixed finite degree approximation, to order $m$. One may simply take the (homotopy) colimit as  $m \ra \infty$ to obtain the
  corresponding statement for the infinite classifying spaces. The proof of
the corresponding statement in the \'etale setting is similar, and is therefore skipped.
\end{proof}
\vskip .2cm
\begin{remark} In \cite{K}, a motivic variant of the Snaith splitting is worked out. However, our approach discussed above 
is quite different from the approach taken in \cite{K}, as we make strong use of the double coset formulae as in the 
Mitchell-Priddy paper: \cite{MP}.
\end{remark}
\vskip .2cm

\end{document}